%
%
%

\documentclass[graybox]{svmult}


\usepackage{mathptmx}       
\usepackage{helvet}         
\usepackage{courier}        
\usepackage{type1cm}        
%
\usepackage{makeidx}         
\usepackage{graphicx}        
\usepackage{multicol}        
\usepackage[bottom]{footmisc}


\usepackage{amsmath, amssymb}
\usepackage{mathrsfs}

\makeindex             



\newcommand{\R}{\mathbb{R}}
\newcommand{\C}{\mathbb{C}}
\newcommand{\Z}{\mathbb{Z}}

\newcommand{\cF}{{\mathcal F}}
\newcommand{\cI}{{\mathcal I}}
\newcommand{\cJ}{{\mathcal J}}
\newcommand{\cN}{{\mathcal N}}
\newcommand{\cS}{{\mathcal S}}

\newcommand{\Ga}{\Gamma}

\newcommand{\loc}{\mathrm{loc}}
\newcommand{\per}{\mathrm{per}}

\newcommand{\weakto}{\rightharpoonup}

\DeclareMathOperator{\essinf}{\mathrm{ess}\,\mathrm{inf}}

\DeclareMathOperator{\supp}{\mathrm{supp}\,}


\begin{document}

\title*{Schr\"odinger-type equations with sign-changing nonlinearities: a survey}
\author{Bartosz Bieganowski}
\institute{Nicolaus Copernicus University, Faculty of Mathematics and Computer Science, ul. Chopina 12/18, 87-100 Toruń, Poland, \email{bartoszb@mat.umk.pl}
}
%
%
\maketitle

\abstract*{We are looking for solutions to nonlinear Schr\"odinger-type equations of the form
$$
(-\varDelta)^{\alpha / 2} u (x) + V(x) u(x) = h (x,u(x)), \quad x \in \mathbb{R}^N,
$$
where $V : \mathbb{R}^N \rightarrow \mathbb{R}$ is an external potential (bounded, unbounded or singular) and the nonlinearity $h : \mathbb{R}^N \times \mathbb{R} \rightarrow \mathbb{R}$ may be sign-changing. The aim of this paper is to give an introduction to the problem and the variational approach based on the Nehari manifold technique, and to survey recent results concerning the existence of ground and bound states.}

\abstract{We are looking for solutions to nonlinear Schr\"odinger-type equations of the form
$$
(-\varDelta)^{\alpha / 2} u (x) + V(x) u(x) = h (x,u(x)), \quad x \in \mathbb{R}^N,
$$
where $V : \mathbb{R}^N \rightarrow \mathbb{R}$ is an external potential (bounded, unbounded or singular) and the nonlinearity $h : \mathbb{R}^N \times \mathbb{R} \rightarrow \mathbb{R}$ may be sign-changing. The aim of this paper is to give an introduction to the problem and the variational approach based on the Nehari manifold technique, and to survey recent results concerning the existence of ground and bound states.}

\section{Introduction}
\label{sec:1}

Let $\alpha \in (0, 2]$ and $N > \alpha$. We consider the following (nonlocal) Schr\"odinger problem
\begin{equation}\label{eq:1.1}
(-\varDelta)^{\alpha / 2} u(x) + V(x) u(x) = f(x,u(x)) - K(x) |u(x)|^{p-2} u (x), \quad x \in \R^N,
\end{equation}
where $u \in H^{\alpha / 2} (\R^N)$ and $(-\varDelta)^{\alpha / 2}$ is the fractional Laplacian (for $\alpha = 2$ we take $-\varDelta$). We assume that $\alpha \in (0,2]$ and $N > \alpha$. The problem \eqref{eq:1.1} arises in many various branches of mathematical physics and nonlinear topics. The local case ($\alpha = 2$) of the equation \eqref{eq:1.1} finds applications in nonlinear optics, where the light propagation in photonic crystals is studied. The potential $V$ describes the nanostructure of the material and the nonlinearity is responsible for its polarization. In particular, when the nonlinearity is sign-changing, the material is a mixture of focusing and defocusing materials. The material may have a periodic structure (then $V$ is a $\mathbb{Z}^N$-periodic function) and may have a defect (then $V$ is a close-to-periodic potential). Singular and unbounded potentials are also intensively studied. The fractional case ($0 < \alpha < 2$) has been introduced to describe the propagation dynamics of wave pockets in the presence of harmonic potential and also for the free particle. Such an equations was also studied in the quantum scattering problem. For more details of the physical motivation we refer to e.g. \cite{Akozbek, NonKerrBook, Buryak, Doerfler, GoodmanWinsteinJNS2001, Kuchment, Malomed, Pankov, ReedSimon, NonlinearPhotonicCrystals, TerraciniVerzini, YuPhysRevA, Longhi, Robinett, ZhangLiu, ZhangZhong}. The equation \eqref{eq:1.1} has been very intesively studied, see e.g. \cite{AlamaLi, AmbrosettiCeramiRuiz, AmbrosettiColorado, AmbrosettiRabinowitz, BartschDingPeriodic, BartschDancerWang, BartschDing, Bartsch, BelmonteBetiaPelinovsky, BenciGrisantiMeicheletti, BenciRabinowitz, BuffoniJeanStuart, ChabrowskiSzulkin2002, ChenZouCalPDE2013, Cheng, CotiZelati, doO, GilbargTrudinger, IkomaTanaka, Jeanjean, KryszSzulkin, LiSzulkin, LiTang, LiWangZeng, Liu, MaiaJDE2006, MederskiTMNA2014, MederskiNLS2014, MontefuscoPellacciSquassinaJEMS2008, PankovDecay, PengChenTang, Rabinowitz:1992, Sirakov, WeiWethARMA2008, WillemZou, Zhang, dAveniaMederski, Davila2, Davila, Dipierro, Fall2, Frank2, GuoMederski}.

The equation \eqref{eq:1.1} describes the behaviour of the so-called standing wave solution $\Psi : \R^N \times \R \rightarrow \C$,
$$
\Psi(x,t) = \E^{-\I \omega t} u(x)
$$
of the time-dependent (fractional) Schr\"odinger equation
$$
\I \frac{\Psi(x,t)}{\partial t} = (-\varDelta)^{\alpha / 2} \Psi(x,t) + (V(x)+\omega)\Psi(x,t) - h(x,|\Psi(x,t)|) \Psi (x,t),
$$
where $\omega \in \R$. The fractional equation was introduced by Laskin by expanding the Feynman path integral from the Brownian-like to the L\'evy-like quantum mechanical paths (\cite{Laskin2000, Laskin2002}). 

The fractional Laplacian $(-\varDelta)^{\alpha/2}$ of a function $\psi : \R^N \rightarrow \R$ is given by the Fourier multiplier $|\xi|^\alpha$, i.e.
$$
\cF \left( (-\varDelta)^{\alpha / 2} \psi \right) (\xi) := |\xi|^\alpha \hat{\psi} (\xi),
$$
where
$$
\mathcal{F} \psi(\xi) := \hat{\psi} (\xi) := \int_{\R^N} e^{-\mathrm{i} \xi \cdot x} \psi(x) \, dx
$$
denotes the usual Fourier transform. When $\psi$ is a rapidly decaying smooth function, the fractional Laplacian can be defined by the principal value of the following singular integral
$$
(-\varDelta)^{\alpha / 2} \psi(x) = C_{N,\alpha} P.V. \int_{\R^N} \frac{\psi(x)-\psi(y)}{|x-y|^{N+\alpha}} \, dy,
$$
where $C_{N,\alpha} > 0$ is some normalization constant. We recall that
$$
P.V. \int_{\R^N} \frac{\psi(x)-\psi(y)}{|x-y|^{N+\alpha}} \, dy := \lim_{\varepsilon \to 0^+} \int_{|x-y| \geq \varepsilon} \frac{\psi(x)-\psi(y)}{|x-y|^{N+\alpha}} \, dy.
$$ 
These definitions are equivalent on $L^2 (\R^N)$, i.e. they give operators with common domain on $L^2 (\R^N)$ and they coincide on this domain (\cite{Kwasnicki}). 

Let us remind the definition of the fractional Sobolev space. For $0 < \alpha < 2$ we put
$$
H^{\alpha / 2} (\R^N) := \left\{ u \in L^2 (\R^N) \ : \ \int_{\R^N} |\xi|^\alpha |\hat{u}(\xi)|^2 \, d\xi + \int_{\R^N} |u(x)|^2 \, dx < \infty \right\}.
$$ 
It is a Hilbert space endowed with the scalar product
$$
H^{\alpha / 2} (\R^N) \times H^{\alpha / 2} (\R^N) \ni (u,v) \mapsto \int_{\R^N} |\xi|^\alpha \hat{u}(\xi) \overline{\hat{v}(\xi)} \, d\xi + \int_{\R^N} u(x)v(x) \, dx \in \R.
$$
The space $H^{\alpha / 2} (\R^N)$ is also denoted by $W^{\alpha / 2, 2} (\R^N)$. In the local case ($\alpha = 2$) we are working on the classical Sobolev space $H^1(\R^N) = W^{1,2} (\R^N)$ defined by
$$
H^1 (\R^N) := \left\{ u \in L^2 (\R^N) \ : \ \nabla u \in L^2 (\R^N) \right\}
$$
with the scalar product
$$
H^1 (\R^N) \times H^1(\R^N) \ni (u,v) \mapsto \int_{\R^N} \nabla u(x) \cdot \nabla v(x) \, dx + \int_{\R^N} u(x)v(x) \, dx \in \R.
$$
Recall also that there is a continuous embedding $H^{\alpha / 2} (\R^N) \subset L^t (\R^N)$ for all $t \in [2,2^*_\alpha]$, where $2^*_\alpha := \frac{2N}{N-\alpha}$. Moreover the embedding $H^{\alpha / 2} (\R^N) \subset L^t_{\loc} (\R^N)$ is compact for $t \in [2, 2^*_\alpha)$. We refer to \cite{Tartar} for more facts about Sobolev spaces. 

We will use the symbol $| \cdot |_k$ to denote the usual $L^k (\R^N)$-norm.

In what follows, the nonlinear term $f$ satisfies the following conditions.

\begin{enumerate}
\item[(F1)] $f : \R^N \times \R \rightarrow \R$ is measurable and $\mathbb{Z}^N$-periodic in $x \in \R^N$, continuous in $u \in \R$ for a.e. $x \in \R^N$ (i.e. $f$ is the Carath\'{e}odory function) and there are $c > 0$ and $2 < q < p < 2^*_\alpha$ such that
$$
|f(x,u)| \leq c (1+|u|^{p-1}) \mbox{ for all } u \in \R \mbox{ and a.e. } x \in \R^N.
$$
\item[(F2)] $f(x,u) = o(u)$ uniformy in $x$ as $|u|\to 0^+$.
\item[(F3)] $F(x,u)/|u|^q \to \infty$ uniformly in $x$ as $|u|\to\infty$, where $F(x,u) := \int_0^u f(x,s) \, ds$ is the primitive of $f$ with respect to $u$.
\item[(F4)] $\R \setminus \{0\} \ni u \mapsto f(x,u)/|u|^q \in \R$ is strictly increasing on $(-\infty,0)$ and on $(0,\infty)$.
\end{enumerate}

Observe that these assumptions imply that for every $\varepsilon > 0$ there is $C_\varepsilon > 0$ such that
\begin{equation}\label{eps}
|f(x,u)| \leq \varepsilon |u| + C_\varepsilon |u|^{p-1}
\end{equation}
for a.e. $x \in \R^N$ and all $u \in \R$.

On $K$ we impose the following condition:
\begin{enumerate}
\item[($K$)] $K \in L^\infty (\R^N)$ is $\mathbb{Z}^N$-periodic in $x \in \R^N$ and $K(x) \geq 0$ for a.e. $x \in \R^N$. 
\end{enumerate}

The nonlinearity $(x,u) \mapsto f(x,u)-K(x)|u|^{q-2}u$ does not satisfy the Ambrosetti-Rabinowitz-type condition. Moreover, it can be sign-changing, for example - consider $f(x,u) = |u|^{p-2}u$ and $K \equiv 1$, where $2 < q < p < 2^*_\alpha$.

The paper is organized as follows. The second chapter contains the general, variational theorem, which describes the geometry of the energy functional and allows us to find a bounded minimizing sequence. Sections \ref{sec:3}, \ref{sec:4}, \ref{sec:5} contain results concerning close-to-periodic, coercive and singular potentials, respectively. The sixth section presents two versions of the Palais-Smale sequences decomposition, which is needed to the analysis of the minimizing sequence when the potential is close-to-periodic or singular.

\section{Variational setting}
\label{sec:2}

Consider a (real) Hilbert space $(E, \langle \cdot, \cdot \rangle)$ with the norm $\| \cdot \|$ induced by the scalar product, i.e.
$$
\|u\|^2 := \langle u, u \rangle, \quad u \in E.
$$
Let $\cJ : E \rightarrow \R$ be a (nonlinear) functional of the general form
$$
\cJ (u) = \frac{1}{2} \|u\|^2 - \cI (u),
$$
where $\cI : E \rightarrow \R$ is of $C^1$-class and $\cI(0)=0$. The Nehari manifold is given by
$$
\cN = \{ u \in E \setminus \{0\} \ : \ \cJ'(u)(u) = 0 \}.
$$
Note that the condition $\cJ'(u)(u) = 0$ is equivalent to the following one
$$
\|u\|^2 = \cI'(u)(u).
$$
Moreover the set $\cN$ contains all nontrivial critical points of $\cJ$. The following general theorem is crucial to obtain existence results, in fact all assumptions in the following theorem describe the geometry of the functional $\cJ$ needed to obtain a bounded Palais-Smale sequence. 

\begin{theorem}[{\cite[Theorem 2.1]{BieganowskiMederski}}]\label{abstract}
Suppose that the following conditions hold:
\begin{enumerate}
\item[(J1)] there is a radius $r > 0$ such that $a := \inf_{\|u\|=r} \cJ(u) > 0 = \cJ(0)$;
\item[(J2)] there is $q \geq 2$ such that $\cI(t_n u_n) / t_n^q \to \infty$ for any $t_n \to \infty$ and $u_n \to u \neq 0$ as $n \to \infty$;
\item[(J3)] for all $t \in (0,\infty) \setminus \{1\}$ and $u \in \cN$ the following inequality holds true
$$
\frac{t^2-1}{2} \cI'(u)(u) - \cI(tu)+\cI(u) < 0;
$$
\item[(J4)] $\cJ$ is coercive on $\cN$, i.e. $\cJ(u_n) \to \infty$ if $\{ u_n \} \subset \cN$ is such that $\|u_n\| \to \infty$.
\end{enumerate}
Then $\inf_{\cN} \cJ > 0$ and there is a bounded Palais-Smale sequence for $\cJ$ on $\cN$, i.e. there is a sequence $\{ u_n \} \subset \cN$ such that
$$
\cJ(u_n) \to \inf_{\cN} \cJ \quad \mathrm{and} \quad \cJ'(u_n) \to 0.
$$
\end{theorem}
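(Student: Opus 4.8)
The plan is to use the Nehari-manifold reduction of Szulkin and Weth: show that each ray meets $\cN$ in exactly one point, use this to transport the constrained minimization onto the unit sphere $S := \{u \in E : \|u\| = 1\}$, where the reduced functional is of class $C^1$, and then apply Ekeland's variational principle on $S$.

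First I would study, for fixed $u \in E \setminus \{0\}$, the fibering map $\varphi_u(t) := \cJ(tu)$, $t \geq 0$. Since $\cJ(0) = 0$ and, by (J1), $\cJ(tu) \geq a > 0$ whenever $\|tu\| = r$, while (J2) with $t_n = t \to \infty$ and $u_n \equiv u$ gives $\cI(tu)/t^q \to \infty$ and hence (as $q \geq 2$) $\varphi_u(t) \to -\infty$ as $t \to \infty$, the continuous function $\varphi_u$ attains a maximum at some $t(u) \in (0,\infty)$ with $\varphi_u(t(u)) \geq a$; there $0 = \varphi_u'(t(u)) = \cJ'(t(u)u)(u)$, so $t(u)u \in \cN$. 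That $t(u)$ is unique --- equivalently, that $\{tu : t>0\}$ meets $\cN$ only once --- follows from (J3): for $v \in \cN$ and $t \neq 1$,
$$
\cJ(tv) - \cJ(v) = \frac{t^2-1}{2}\|v\|^2 - \cI(tv) + \cI(v) = \frac{t^2-1}{2}\cI'(v)(v) - \cI(tv) + \cI(v) < 0,
$$
so $v$ is the strict global maximum of $\varphi_v$; two distinct points of $\cN$ on one ray would then be mutually strictly below each other, a contradiction. In particular $\inf_\cN \cJ \geq a > 0$, which is the first assertion. The same estimate shows $\cN$ stays away from the origin: if $u_n \in \cN$ and $u_n \to 0$ then $\cJ(u_n) \to 0$, contradicting $\cJ(u_n) \geq a$; hence $\|u\| \geq \rho$ on $\cN$ for some $\rho > 0$.

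Next comes the reduction. Define $m : S \to \cN$ by $m(w) := t(w)w$; it is a bijection with inverse $u \mapsto u/\|u\|$, and the crucial point is that $m$ is a homeomorphism and $\Psi := \cJ \circ m$ is of class $C^1$ on $S$ with $\Psi'(w)(z) = \|m(w)\|\,\cJ'(m(w))(z)$ for every $z \in T_w S = \{z : \langle z, w\rangle = 0\}$. Continuity of $w \mapsto t(w)$ I would get by first showing $\{t(w_n)\}$ is bounded when $w_n \to w$ (otherwise (J2) forces $\cJ(m(w_n)) \to -\infty$, contradicting $\cJ(m(w_n)) \geq a$), then noting that a subsequential limit $\bar t$ is positive (else $\cJ(m(w_n)) \to 0$) and, passing to the limit in $\|m(w_n)\|^2 = \cI'(m(w_n))(m(w_n))$ via $\cI \in C^1$, satisfies $\bar t w \in \cN$, so $\bar t = t(w)$ by uniqueness. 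The $C^1$-regularity and the derivative formula are the standard Szulkin--Weth computation --- a mean-value-theorem argument that exploits $\cJ'(m(w))(m(w)) = 0$ to kill the contribution of the implicitly defined factor $t(\cdot)$ --- and this is the step I expect to be the main technical obstacle.

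Finally, $S$ is a complete $C^1$-Finsler manifold and $\Psi \geq a$ with $\inf_S \Psi = \inf_\cN \cJ =: c$, so Ekeland's variational principle produces $\{w_n\} \subset S$ with $\Psi(w_n) \to c$ and $\|\Psi'(w_n)\|_{(T_{w_n}S)^*} \to 0$. Set $u_n := m(w_n) \in \cN$, so $\cJ(u_n) = \Psi(w_n) \to c$. Decomposing any $v \in E$ as $v = \lambda w_n + z$ with $z \perp w_n$ and using $\cJ'(u_n)(w_n) = \|u_n\|^{-1}\cJ'(u_n)(u_n) = 0$ together with $\cJ'(u_n)(z) = \|u_n\|^{-1}\Psi'(w_n)(z)$ and $\|u_n\| \geq \rho$, one obtains $\|\cJ'(u_n)\|_{E^*} \leq \rho^{-1}\|\Psi'(w_n)\|_{(T_{w_n}S)^*} \to 0$. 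Thus $\{u_n\} \subset \cN$ is a Palais--Smale sequence for $\cJ$ at level $c$; it is bounded because $\cJ(u_n) \to c < \infty$ while $\cJ$ is coercive on $\cN$ by (J4), which finishes the proof.
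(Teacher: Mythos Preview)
Your proposal is correct and follows essentially the same route as the paper: both use the Szulkin--Weth Nehari-manifold reduction, establish existence and uniqueness of the fibering maximum via (J1)--(J3), show $m:\cS\to\cN$ is a homeomorphism with $\cJ\circ m\in C^1$ and $(\cJ\circ m)'(w)(z)=\|m(w)\|\,\cJ'(m(w))(z)$ through a mean-value argument, apply Ekeland on $\cS$, and then convert the resulting Palais--Smale sequence back to one for $\cJ$ using $\|u_n\|\geq\rho$ and coercivity (J4). The only cosmetic differences are that the paper first defines $\hat m$ on $E\setminus\{0\}$ before restricting to $\cS$, and verifies continuity of $t(\cdot)$ via a squeeze on $\cJ$-values rather than passing to the limit in the Nehari constraint as you do; both are valid.
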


Note that, taking into account (J1) and (J2) we can easily check that $\cJ$ has the classical mountain pass geometry. Hence we are able to find a Palais-Smale sequence on the mountain pass level, however we do not know whether it is a bounded sequence and contained in $\cN$. In order to get the boundedness we assume in (J4) that $\cJ$ is coercive on $\cN$, which is, in applications, a weaker requirement than the classical Ambrosetti-Rabinowitz condition.

\begin{remark}\label{rem-abstr}
\begin{enumerate}
\item[(a)] In order to get (J3) it is sufficient to check
\begin{equation}
(1-t) (t \cI'(u)(u) - \cI'(tu)(u)) > 0
\end{equation}
for any $t \in (0,\infty) \setminus \{1\}$ and $u \in E$ such that $\cI'(u)(u) > 0$. 
\item[(b)] (J3) is equivalent to the condition: for $u \in \cN$, the point $t=1$ is the unique maximum of 
$$
(0,\infty) \ni t \mapsto \cJ(tu)  \in \R.
$$
\end{enumerate}
\end{remark}

\begin{proof}[Theorem \ref{abstract}]
For a given $u \neq 0$ we define a map
$$
\varphi : [0,\infty) \rightarrow \R
$$
by the formula $\varphi(t) = \cJ(tu)-\cJ(u)$. Obviously $\varphi$ is of $C^1$-class and $\varphi(1) = 0$. Moreover $\varphi$ is given by
$$
\varphi(t) = \frac{t^2-1}{2} \cI'(u)(u) - \cI(tu)+\cI(u)
$$
provided that $u \in \cN$. In view of (J1) and (J2) we easily obtain that
$$
\varphi(0) = - \cJ(u) < \varphi \left( \frac{r}{\|u\|} \right)
$$
and $\varphi(t) \to -\infty$ as $t \to \infty$. Therefore there is a maximum point $t(u) > 0$ of $\varphi$ which is a critical point of $\varphi$, i.e. $\cJ'(t(u)u)u = 0$ and $t(u)u \in \cN$. From Remark \ref{rem-abstr}(b) we get that this point is unique. Define a map $\hat{m} : E \setminus \{0\} \rightarrow \cN$ by
$$
\hat{m}(u) = t(u)u.
$$
Take $u_n \to u_0 \neq 0$ and denote $t_n = t(u_n)$ for $n \geq 0$, so that $\hat{m}(u_n) = t_n u_n$. Assume that $t_n \to \infty$. Then
$$
o(1) = \cJ(u_n) / t_n^q \leq \cJ(\hat{m}(u_n)) / t_n^q = \frac12 \|u_n\|^{2} t_n^{2-q} - \cI(t_n u_n) / t_n^q \to -\infty
$$
as $n \to \infty$, thus we get a contradiction. Hence $\{ t_n \} \subset \R$ is bounded and we may assume that $t_n \to t_0 \geq 0$. Then
$$
\cJ(t(u_0)u_0) \geq \cJ(t_0 u_0) = \lim_{n\to\infty} \cJ(t_n u_n) \geq \lim_{n\to\infty} \cJ(t(u_0) u_n) = \cJ(t(u_0) u_0),
$$
thus $t_0 = t(u_0)$ and we obtained that $\hat{m}(u_n) \to \hat{m}(u_0)$, and $\hat{m}$ is continuous. Then $m := \hat{m} \big|_\cS$ is an homeomorphism, where
$$
\cS = \{ u \in E \ : \ \|u\|=1 \}.
$$
Moreover the inverse is given by $m^{-1} (v) = v / \|v\|$. Therefore
$$
c = \inf_{u \in \cS} (\cJ \circ m)(u) = \inf_{u \in \cN} \cJ(u) \geq \inf_{u \in \cN} \cJ \left( \frac{r}{\|u\|} u \right) \geq a > 0.
$$
Observe that $\cJ \circ m$ is the restriction of $\cJ \circ \hat{m}$ to the sphere $\cS$. We will show that $\hat{\Phi} := \cJ \circ \hat{m}$ is of $C^1$-class. 
Take any $w, z \in E$ with $w \neq 0$. Let $u = \hat{m}(w)$. Then $t(u) = \frac{\|u\|}{\|w\|}$ and $u = \frac{\|u\|}{\|w\|} w$. Let $\delta > 0$ be a small number such that 
$$
w_t := w + tz \in E \setminus \{0\}
$$
for $|t| < \delta$. Then $u_t := \hat{m} (w_t) = s_t w_t$ for $t \in (-\delta, \delta)$, where $s_t > 0$ and $s_0 = \frac{\|u\|}{\|w\|}$. As we already showed, $\hat{m}$ is continuous and therefore
$$
(-\delta, \delta) \ni t \mapsto s_t \in (0, \infty)
$$
is also continuous. Moreover
$$
\cJ(s_0w) = \sup_{s > 0} \cJ (s w) \geq \cJ(s_t w).
$$
Hence, in view of the mean value theorem,
\begin{align*}
\hat{\Phi}(w_t) - \hat{\Phi}(w) &= \cJ(u_t) - \cJ(u) = \cJ(s_t w_t) - \cJ(s_0 w) \\ &\leq \cJ(s_t w_t) - \cJ(s_t w) = \cJ'(s_t [w + \tau_t (w_t-w)]) s_t (w_t-w),
\end{align*}
where $\tau_t \in (0,1)$. Hence 
$$
\hat{\Phi}(w_t) - \hat{\Phi}(w) \leq s_0 \cJ'(u) tz + o(t) \quad \mbox{as} \ t \to 0.
$$
Similarly
$$
\hat{\Phi}(w_t) - \hat{\Phi}(w) \geq \cJ(s_0 w_t) - \cJ(s_0 w) = s_0 \cJ'(u) tz + o(t)
$$
as $t \to 0$. Hence
$$
(\cJ \circ \hat{m})'(w)(z) = s_0 \cJ'(u)(z) = \frac{\|\hat{m}(u)\|}{\|w\|} \cJ'(\hat{m}(w))(z).
$$
In particular $\cJ \circ \hat{m}$ is of $C^1$-class. Hence $\cJ \circ m : \cS \rightarrow \R$ is also of $C^1$-class and taking $v \in \cS$ we obtain that
$$
(\cJ \circ m)'(v)(z) = \| m (v) \| \cJ' (m(v)) (z)
$$
for $z \in T_v \cS$, where $T_v \cS$ denotes the tangent space to $\cS$ at point $v \in \cS$. Hence, in view of the Ekeland variational principle (\cite[Theorem 8.5]{Willem}) there is a minimizing sequence $\{ v_n \} \subset \cS$ such that
$$
\cJ(m(v_n)) \to c, \quad (\cJ \circ m)'(v_n) \to 0.
$$
Take $u_n = m(v_n) \in \cN$. While $\cJ'(u_n)(v_n) = 0$ we get
$$
(\cJ \circ m)'(v_n)(z) = \|u_n\| \cJ'(u_n)(z) = \|u_n\| \cJ'(u_n)(z+tv_n)
$$
for any $z \in T_{v_n} \cS$ and $t \in \R$. Hence
$$
\| (\cJ \circ m)'(v_n) \| = \|u_n\| \| \cJ'(u_n) \|.
$$
Since $u_n \in \cN$, we have $\|u_n\| \geq \eta$ for some $\eta > 0$ and in view of the coercivity we have $\| u_n \| \leq M$ for some $M > 0$. Hence $\{ u_n \}$ is a bounded Palais-Smale sequence for $\cJ$ on $\cN$.
\qed
\end{proof}

\section{Close-to-periodic potentials}
\label{sec:3}

In this section we will provide existence, nonexistence and multiplicity results concerning problem \eqref{eq:1.1} in the presence of an external, close-to-periodic potential. More precisely, we assume that
\begin{enumerate}
\item[(V1)] $V = V_{\loc} + V_{\per}$, where $V_{\per} \in L^\infty (\R^N)$ is $\Z^N$-periodic and $V_{\loc} \in L^\infty (\R^N) \cap L^{s} (\R^N)$ for some $s \geq \frac{N}{\alpha}$;
\item[(V2)] there holds
$$
\left\{ \begin{array}{ll}
\inf \sigma (-\varDelta + V(x)) > 0 & \quad \mbox{for } \alpha = 2, \\
\essinf_{x \in \R^N} V(x) > 0 & \quad \mbox{for } 0 < \alpha < 2.
\end{array} \right.
$$
\end{enumerate}

Under (V1) and (V2) the formula
$$
\langle u, v \rangle := \int_{\R^N} |\xi|^\alpha \hat{u}(\xi) \overline{\hat{v}(\xi)} \, d\xi + \int_{\R^N} V(x) u(x)v(x) \, dx, \quad \alpha \in (0,2)
$$
or
$$
\langle u,v \rangle := \int_{\R^N} \nabla u \cdot \nabla v + V(x) u(x)v(x) \, dx, \quad \alpha = 2
$$
induces a scalar product on $H^{\alpha / 2} (\R^N)$, which is equivalent to the classic one. Then the energy functional associated with our problem on $H^{\alpha / 2} (\R^N)$ is given by
$$
\cJ(u) = \frac12 \|u\|^2 -\int_{\R^N} F(x,u) - \frac{1}{q} K(x) |u|^q \, dx.
$$
The Nehari manifold is given by
$$
\cN = \left\{ u \in H^{\alpha / 2} (\R^N) \setminus \{0\} \ : \ \|u\|^2 = \int_{\R^N} f(x,u)u \, dx - \int_{\R^N} K(x) |u|^q \, dx \right\}.
$$

The first result reads as follows.

\begin{theorem}[{$\alpha = 2$: \cite[Theorem 1.1]{BieganowskiMederski}, $0<\alpha<2$: \cite[Theorem 1.1]{Bieganowski}}]\label{existence-per}
Suppose that (V1), (V2), (K) and (F1)--(F4) hold, and $V_{\loc} \equiv 0$ or $V_{\loc}(x) < 0$ for a.e. $x \in \R^N$. Then \eqref{eq:1.1} has a ground state, i.e. there is a nontrivial critical point $u$ of $\cJ$ such that $\cJ(u) = \inf_{\cN} \cJ$. 
\end{theorem}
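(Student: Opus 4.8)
The plan is to verify the abstract hypotheses (J1)--(J4) of Theorem~\ref{abstract} for the functional $\cJ$ and the Hilbert space $E = H^{\alpha/2}(\R^N)$ equipped with the equivalent scalar product $\langle \cdot, \cdot\rangle$ induced by $V$, so that one obtains a bounded Palais--Smale sequence $\{u_n\} \subset \cN$ at the level $c := \inf_\cN \cJ > 0$; then the main work is to pass to the limit and produce an actual critical point realizing $c$. Here $\cI(u) = \int_{\R^N} F(x,u) - \tfrac1q K(x)|u|^q\, dx$. First I would check (J1): using \eqref{eps} together with the fractional Sobolev embedding $H^{\alpha/2} \hookrightarrow L^2 \cap L^p$ and $K \in L^\infty$ with $K \ge 0$, one gets $\cI(u) \le \tfrac{\varepsilon}{2}|u|_2^2 + C|u|_p^p \le \tfrac{\varepsilon}{2}C'\|u\|^2 + C''\|u\|^p$, so $\cJ(u) \ge \tfrac14\|u\|^2 - C''\|u\|^p$ for $\varepsilon$ small, which is positive on a small sphere $\|u\| = r$. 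For (J2), fix $t_n \to \infty$ and $u_n \to u \ne 0$; by (F3), $F(x,t_nu_n)/t_n^q \ge |u_n|^q \cdot (F(x,t_nu_n)/|t_nu_n|^q) \to \infty$ pointwise on $\{u \ne 0\}$, and a Fatou/Lebesgue argument (bounding the subtracted term $\tfrac1q K(x)|t_nu_n|^q/t_n^q = \tfrac1q K(x)|u_n|^q$ by an $L^1$ function via $|u_n|_q \le C\|u_n\|$ bounded) gives $\cI(t_nu_n)/t_n^q \to \infty$.

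Hypothesis (J3): by Remark~\ref{rem-abstr}(a) it suffices to show $(1-t)\big(t\,\cI'(u)(u) - \cI'(tu)(u)\big) > 0$ for $t \ne 1$ whenever $\cI'(u)(u) > 0$. A direct computation gives
$$
t\,\cI'(u)(u) - \cI'(tu)(u) = \int_{\R^N} \Big( t\, f(x,u)u - f(x,tu)(tu) \Big)\,dx \;-\; \int_{\R^N} K(x)\big(t|u|^q - t^{q-1}|u|^q\big)\,dx.
$$
For the $f$-term I would write $t\,f(x,u)u - f(x,tu)(tu) = |u|^q\big(t|u|^{q-1}\tfrac{f(x,u)}{|u|^{q-1}} - \cdots\big)$ and use the strict monotonicity in (F4) of $s \mapsto f(x,s)/|s|^{q-1}$ on each half-line to see that this integrand, divided by $(1-t)$, is strictly positive (on the set where $u \ne 0$); the $K$-term contributes $-t\,K(x)|u|^q(1 - t^{q-2})$ whose sign against $(1-t)$ is $\le 0$ only when... — here one must be careful, and in fact the clean route is the standard Nehari computation showing $t \mapsto \tfrac{t^2-1}{2}\cI'(u)(u) - \cI(tu) + \cI(u)$ is strictly decreasing past its unique zero, which follows because $u \mapsto f(x,u)u - qF(x,u)$ and $u \mapsto (q-2)K(x)|u|^q$ combine with (F4) correctly; I expect this algebraic step to require the most care. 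For (J4), coercivity on $\cN$: if $\{u_n\} \subset \cN$ with $\|u_n\| \to \infty$, set $v_n = u_n/\|u_n\| $, so $\|v_n\| = 1$ and up to a subsequence $v_n \weakto v$; if $v = 0$ one derives a contradiction using that $\cJ(u_n) = \cJ(u_n) - \tfrac1q\cJ'(u_n)(u_n)$ has a lower bound forcing $v \ne 0$ via a concentration argument, and if $v \ne 0$ then (J2)-type growth of $\cI$ along the ray $t_n v_n$ with $t_n = \|u_n\|$ forces $\cJ(u_n) \le \cJ(\hat m(u_n)) $ estimates to blow up — more precisely one uses $\cJ(u_n)/\|u_n\|^q = \tfrac12\|u_n\|^{2-q} - \cI(u_n)/\|u_n\|^q \to -\infty$, contradicting $\cJ(u_n) \ge a > 0$, unless $\|u_n\|$ stays bounded; so in fact $\cJ$ being bounded below on $\cN$ already yields boundedness of $\|u_n\|$, and coercivity follows by the same identity.

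With (J1)--(J4) established, Theorem~\ref{abstract} yields a bounded sequence $\{u_n\} \subset \cN$ with $\cJ(u_n) \to c$ and $\cJ'(u_n) \to 0$. The final and genuinely hard step is to recover compactness: up to a subsequence $u_n \weakto u$ in $H^{\alpha/2}(\R^N)$, and by weak continuity of $\cJ'$ along with the local compact embeddings one gets $\cJ'(u) = 0$. The obstacle is that $u$ might be zero (loss of mass to infinity). Since the potential is close-to-periodic, $V_{\loc} \equiv 0$ reduces to the purely periodic case where $\Z^N$-translation invariance and a Lions-type vanishing lemma let one translate $u_n$ to prevent vanishing, producing a nontrivial weak limit that is a critical point at level $\le c$, hence exactly $c$ since it lies on $\cN$; the case $V_{\loc}(x) < 0$ is handled by comparing the energy with the periodic problem (the perturbation lowers the quadratic form, so the close-to-periodic ground-state level does not exceed the periodic one, which suffices to exclude escape to infinity). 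This comparison/splitting argument — which the paper defers to its Section on Palais--Smale decompositions — is where the real difficulty lies; the variational machinery above is comparatively routine. Finally, a standard argument (e.g. via the fact that $m$ is a homeomorphism and $c$ is attained along the minimizing sequence) shows the limit $u$ satisfies $\cJ(u) = c = \inf_\cN\cJ$, i.e.\ $u$ is a ground state.
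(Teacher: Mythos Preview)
Your proposal is correct and follows essentially the same strategy as the paper: invoke Theorem~\ref{abstract} to produce a bounded Palais--Smale sequence at level $c=\inf_{\cN}\cJ>0$, then recover compactness via the profile decomposition of Theorem~\ref{ThDecomposition-per}, using the energy splitting $\cJ(u_n)\to\cJ(u_0)+\sum_k\cJ_{\per}(w^k)$ together with the strict inequality $c<c_{\per}$ (when $V_{\loc}<0$) to force $\ell=0$ and $u_0\ne 0$. The paper's own proof is terser---it does not re-verify (J1)--(J4) here (these are established in the cited references) and it treats both the periodic and the $V_{\loc}<0$ cases uniformly through the decomposition rather than via the direct Lions-translation argument you sketch for $V_{\loc}\equiv 0$; your alternative route for the periodic case is valid and slightly more elementary, but note that in the $V_{\loc}<0$ case you need the \emph{strict} inequality $c<c_{\per}$ (not merely $c\le c_{\per}$) to rule out escape to infinity, exactly as the paper argues.
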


\begin{proof}[Theorem \ref{existence-per}]
From Theorem \ref{abstract} there exists a bounded sequence $\{ u_n \} \subset \cN$ such that
$$
\cJ(u_n) \to \inf_{\cN} \cJ =: c > 0, \quad \cJ'(u_n) \to 0.
$$
Define
$$
\cJ_{\per} (u) := \cJ(u) - \frac12 \int_{\R^N} V_{\loc}(x) u^2 \, dx
$$
and
$$
c_{\per} := \inf_{\cN_{\per}} \cJ_{\per},
$$
where
$$
\cN_{\per} = \left\{ u \in H^{\alpha / 2} (\R^N) \setminus \{0\} \ : \ \cJ_{\per}' (u)(u) = 0 \right\}.
$$ 
By Theorem \ref{ThDecomposition-per} we have that
$$
\cJ (u_n) \to \cJ(u_0) + \sum_{k=1}^\ell \cJ_{\per} (w^k),
$$
where $w^k$ are critical points of $\cJ_\per$. If $V_{\loc} \equiv 0$ we have $\cJ = \cJ_{\per}$. If $u_0 = 0$, we have
$$
c + o(1) = \cJ (u_n) \to \sum_{k=1}^\ell \cJ_{\per} (w^k) \geq \ell c
$$
and therefore $\ell = 1$ and $w^1 \neq 0$ is a ground state. If $u_0 \neq 0$ we have
$$
c + o(1) = \cJ (u_n) \to \cJ (u_0) + \sum_{k=1}^\ell \cJ_{\per} (w^k) \geq (\ell+1) c
$$
and therefore $\ell = 0$ and $\cJ (u_n) \to \cJ (u_0) = c$, so $u_0$ is a ground state.

Suppose that $V_{\loc} (x) < 0$ for a.e. $x \in \R^N$. Then $c_{\per} > c$. Suppose that $u_0 = 0$. Therefore
$$
c + o(1) = \cJ (u_n) \to \sum_{k=1}^\ell \cJ_{\per} (w^k) \geq \ell c_{\per} > \ell c
$$
and $\ell = 0$. Thus $\cJ (u_n) \to 0 = c$ -- a contradiction. Therefore $u_0 \neq 0$ and observe that
$$
c + o(1) = \cJ (u_n) \to \cJ(u_0) + \sum_{k=1}^\ell \cJ_{\per} (w^k) \geq c + \ell c_{\per},
$$
and $\ell = 0$. It means that $\cJ (u_n) \to \cJ (u_0) = c$ and $u_0$ is a ground state.
\qed
\end{proof}

The nonexistence result is the following. It is a new result in the fractional setting, i.e. for $0 < \alpha < 2$.

\begin{theorem}[{$\alpha = 2$: \cite[Theorem 1.2]{BieganowskiMederski}}]\label{nonexistence-per}
Suppose that (V1), (K) and (F1)--(F4) hold, and $V_{\loc}(x) > 0$ for a.e. $x \in \R^N$, while (V2) is replaced by
$$
\left\{ \begin{array}{ll}
\inf \sigma (-\varDelta + V_{\per}(x)) > 0 & \quad \mbox{for } \alpha = 2, \\
\essinf_{x \in \R^N} V_{\per}(x) > 0 & \quad \mbox{for } 0 < \alpha < 2.
\end{array} \right.
$$ 
Then \eqref{eq:1.1} has no ground state solutions.
\end{theorem}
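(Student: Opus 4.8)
The plan is to argue by contradiction, comparing the would-be ground state level $c:=\inf_{\cN}\cJ$ of the full problem with the ground state level $c_{\per}:=\inf_{\cN_{\per}}\cJ_{\per}$ of the periodic problem obtained by deleting $V_{\loc}$, in the spirit of the proof of Theorem~\ref{existence-per}. I would keep using the decomposition $\cJ(u)=\cJ_{\per}(u)+\tfrac12\int_{\R^N}V_{\loc}(x)u^2\,dx$ together with the fact that, under (F1)--(F4) and (K), both $\cJ$ and (thanks to the replaced version of (V2)) $\cJ_{\per}$ satisfy the hypotheses of Theorem~\ref{abstract}. In particular each functional has a well-defined Nehari fibering: for $u\neq0$ there is a unique $t>0$ with $tu$ on the respective Nehari manifold, and by Remark~\ref{rem-abstr}(b) that $t$ is the unique maximizer of $t\mapsto\cJ(tu)$, resp. $t\mapsto\cJ_{\per}(tu)$. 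Also, by Theorem~\ref{existence-per} applied with $V_{\loc}\equiv0$, the periodic problem has a ground state $w\in\cN_{\per}$ with $\cJ_{\per}(w)=c_{\per}$.

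First I would establish a strict energy gap on $\cN$. For $u\in\cN$, let $\tau(u)>0$ be such that $\tau(u)u\in\cN_{\per}$; then
$$\cJ(u)\;=\;\max_{t>0}\cJ(tu)\;\geq\;\cJ(\tau(u)u)\;=\;\cJ_{\per}(\tau(u)u)+\frac{\tau(u)^2}{2}\int_{\R^N}V_{\loc}\,u^2\,dx\;>\;c_{\per},$$
because $V_{\loc}(x)>0$ a.e. forces $\int_{\R^N}V_{\loc}\,u^2\,dx>0$ for $u\not\equiv0$. Hence, if \eqref{eq:1.1} possessed a ground state $u_0\in\cN$, we would get the strict inequality $c=\cJ(u_0)>c_{\per}$.

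Next I would prove the reverse bound $c\leq c_{\per}$ by a translation argument. Fix $y_n\in\Z^N$ with $|y_n|\to\infty$ and set $w_n:=w(\cdot-y_n)$. By $\Z^N$-periodicity of $f$, $K$ and $V_{\per}$ one has $\cJ(tw_n)=\cJ_{\per}(tw)+\tfrac{t^2}{2}\varepsilon_n$ with $\varepsilon_n:=\int_{\R^N}V_{\loc}(x+y_n)w(x)^2\,dx$. The crucial point is $\varepsilon_n\to0$: since $s\geq N/\alpha$ the conjugate exponent satisfies $s'\leq N/(N-\alpha)$, so $w^2\in L^{s'}(\R^N)$ by the embedding $H^{\alpha/2}(\R^N)\subset L^{2^*_\alpha}(\R^N)$, while $V_{\loc}(\cdot+y_n)\weakto0$ in $L^s(\R^N)$ as $|y_n|\to\infty$ (approximate $V_{\loc}$ by compactly supported functions); pairing the two gives $\varepsilon_n\to0$. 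Letting $t_n>0$ be the Nehari parameter with $t_nw_n\in\cN$, i.e. the maximizer of $t\mapsto\cJ_{\per}(tw)+\tfrac{t^2}{2}\varepsilon_n$ — which by (F3) is bounded uniformly in $n$, since $\{\varepsilon_n\}$ is bounded — one obtains
$$c\;\leq\;\cJ(t_nw_n)\;=\;\max_{t>0}\Bigl(\cJ_{\per}(tw)+\tfrac{t^2}{2}\varepsilon_n\Bigr)\;\leq\;\max_{t>0}\cJ_{\per}(tw)+\frac{t_n^2}{2}\varepsilon_n\;=\;c_{\per}+o(1),$$
using $\max_{t>0}\cJ_{\per}(tw)=\cJ_{\per}(w)=c_{\per}$. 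Passing to the limit yields $c\leq c_{\per}$, which contradicts $c>c_{\per}$ from the previous step; hence no ground state exists.

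The main obstacle I expect is exactly the decay $\varepsilon_n\to0$: this is the place where the integrability hypothesis $V_{\loc}\in L^s$ with the borderline exponent $s\geq N/\alpha$ is genuinely used, and one must simultaneously control the boundedness of the Nehari parameters $t_n$ so that $\tfrac{t_n^2}{2}\varepsilon_n$ is a true $o(1)$ rather than an indeterminate product $\infty\cdot0$. A secondary technical point that needs checking is that the whole abstract Nehari-manifold apparatus (continuity of the fibering maps, uniqueness of the maximizer, positivity of $c_{\per}$) is available for $\cJ_{\per}$ under the weaker, replaced form of (V2).
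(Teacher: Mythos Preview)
Your proposal is correct and follows essentially the same route as the paper: derive the strict inequality $c>c_{\per}$ from the putative ground state via the Nehari fibering, then obtain the reverse inequality $c\leq c_{\per}$ by translating a periodic Nehari element to infinity and using $\int V_{\loc}(x+y_n)u^2\,dx\to0$ together with boundedness of the Nehari parameters. The only cosmetic differences are that the paper takes an arbitrary $u\in\cN_{\per}$ and passes to the infimum (so it does not need to invoke the existence of the periodic ground state $w$), and it justifies the boundedness of $\{t_y\}$ via coercivity of $\cJ_{\per}$ rather than the direct (F3)-based blow-up of the fibering map you use.
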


\begin{proof}[Theorem \ref{nonexistence-per}]
Suppose, by contradiction, that there is a ground state $u_0 \in \cN$ of $\cJ$. Let $t_{\per} > 0$ be such that $t_{\per} u_0 \in \cN_{\per}$. Since $V_{\loc}(x) > 0$ for a.e. $x \in \R^N$, we have that
$$
\int_{\R^N} V_{\loc}(x) u_0^2 \, dx > 0
$$
and therefore
$$
c_{\per} := \inf_{\cN_{\per}} \cJ_{\per} \leq \cJ_{\per} (t_{\per} u_0) < \cJ(t_{\per} u_0) \leq \cJ(u_0) = c.
$$
Fix any $u \in \cN_{\per}$ and for $y \in \mathbb{Z}^N$ let us denote $\tau_y u := u(\cdot - y)$. For each $y \in \mathbb{Z}^N$ let $t_y$ be a number such that $t_y \tau_y u \in \cN$. Now observe that
\begin{align*}
\cJ_{\per} (u) = \cJ_{\per}(\tau_y u) \geq \cJ_{\per}(t_y \tau_y u) &= \cJ(t_y \tau_y u) - \int_{\R^N} V_{\loc}(x) (t_y \tau_y u)^2 \, dx \\ &\geq c - \int_{\R^N} V_{\loc}(x) (t_y \tau_y u)^2 \, dx.
\end{align*}
We are going to show that 
$$
\int_{\R^N} V_{\loc}(x) (t_y \tau_y u)^2 \, dx \to 0.
$$
Indeed, note that
$$
\int_{\R^N} V_{\loc}(x) (t_y \tau_y u)^2 \, dx = t_y^2 \int_{\R^N} V_{\loc}(x + y) u^2 \, dx.
$$
In view of (V1) we easily get
$$
\int_{\R^N} V_{\loc}(x + y) u^2 \, dx \to 0
$$
as $|y|\to\infty$. Since $\cJ_{\per}$ is coercive on $\mathcal{N}_{\per}$, then
$$
\cJ_{\per} (t_y \tau_y u) = \cJ_{\per} (t_y u) \leq c_{\per}
$$
implies that $\{ t_y \}$ is bounded. Therefore
$$
\cJ_{\per} (u) \geq c - \int_{\R^N} V_{\loc}(x) (t_y \tau_y u)^2 \, dx \to c.
$$
Taking infimum over all $u \in \cN_{\per}$ we have a contradiction $c_{\per} \geq c$.
\qed
\end{proof}

Now we will provide the multiplicity result in the case $V_{\loc} \equiv 0$. Suppose that $u$ is a solution of \eqref{eq:1.1} and $k \in \mathbb{Z}^N$, observe that $u(\cdot - k)$ is also a solution, provided that $V_{\loc} \equiv 0$. Therefore all elements of the orbit 
\begin{equation}\label{Eq:Orbit}
\mathcal{O}(u) := \left\{ u( \cdot - k) \ : \ k \in \mathbb{Z}^N \right\}
\end{equation}
of $u$ under the $\mathbb{Z}^N$-action are solutions. Thus, we define that $u_1$ and $u_2$ are geometrically distinct if their orbits satisfy $\mathcal{O}(u_1) \cap \mathcal{O}(u_2) = \emptyset$.

\begin{theorem}[{\cite[Theorem 1.2]{Bieganowski}}]\label{ThMultiplicity}
Suppose that (V1), (V2), (K), (F1)--(F4) are satisfied, $V_{\loc} \equiv 0$ and suppose that $f$ is odd in $u$. Then \eqref{eq:1.1} admits infinitely many pairs $\pm u$ of geometrically distinct solutions.
\end{theorem}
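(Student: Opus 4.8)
\textbf{Proof proposal for Theorem \ref{ThMultiplicity}.}

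The plan is to run the standard Nehari-manifold/Ljusternik--Schnirelmann scheme for even functionals on $\cN$, combined with the Palais--Smale decomposition already announced in the excerpt (Theorem \ref{ThDecomposition-per}, used in the same form as in the proof of Theorem \ref{existence-per}), which in the periodic case $V_{\loc}\equiv0$ gives that every Palais--Smale sequence at level $c$ splits, after translations, into a finite sum of critical points of $\cJ=\cJ_{\per}$. First I would recall from the proof of Theorem \ref{abstract} that the map $m:\cS\to\cN$ is a homeomorphism with $m^{-1}(v)=v/\|v\|$, that $\cJ\circ m$ is of $C^1$-class, and that critical points of $\cJ\circ m$ on $\cS$ correspond bijectively (via $m$) to critical points of $\cJ$ in $\cN$, hence to nontrivial solutions of \eqref{eq:1.1}. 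Thus the problem is reduced to producing infinitely many geometrically distinct critical points of the even functional $\Psi:=\cJ\circ m$ on the unit sphere $\cS$ of the Hilbert space $E=H^{\alpha/2}(\R^N)$, where evenness follows from the oddness of $f$ (so $F$ is even in $u$) together with $K|u|^q$ being even.

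The key structural input is that, because $V_{\loc}\equiv 0$, the functional $\cJ$ is invariant under the $\Z^N$-action $u\mapsto \tau_k u=u(\cdot-k)$, and this action is isometric and free on $\cN$ (no nontrivial $u\in\cN$ is fixed by a nontrivial translation, since $\cN$ contains no constants and a nonzero $L^2$ function cannot be $\Z^N$-periodic). One then works with the quotient and uses a $\Z^N$-equivariant minimax: define, for $k\in\mathbb{N}$, the sublevel sets and the class of compact symmetric subsets of $\cS$ with ``$\Z^N$-genus'' (or equivariant category / Krasnoselskii genus of the quotient) at least $k$, and set
$$
c_k:=\inf_{A}\ \sup_{v\in A}\ \Psi(v),
$$
the infimum taken over such sets $A$. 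Standard arguments give $c_1=c=\inf_{\cN}\cJ>0$ and $c_k\le c_{k+1}$; the point is to show $c_k\to\infty$ and that each $c_k$ is attained, i.e.\ is a critical value carrying a critical point, and that distinct values — or the multiplicity at a single value — yield geometrically distinct orbits. The coercivity (J4) of $\cJ$ on $\cN$ (equivalently of $\Psi$ on $\cS$ through the homeomorphism) is what forces $c_k\to\infty$: if the $c_k$ stayed bounded, the corresponding sublevel set would have infinite $\Z^N$-genus, contradicting a dimension/compactness estimate built from the Palais--Smale decomposition.

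The main obstacle is the lack of a global Palais--Smale condition: $\cN$ is not compact and minimizing/minimax sequences can split off translated bumps, so one cannot directly quote the classical symmetric minimax theorem. The remedy — and the technical heart of the proof — is to invoke Theorem \ref{ThDecomposition-per} to show a \emph{discreteness} property of the critical set modulo $\Z^N$: the set of critical values of $\cJ$ in any bounded interval is finite, and at each level there are only finitely many orbits, because a sum $\sum_{k=1}^{\ell}\cJ_{\per}(w^k)$ with each $\cJ_{\per}(w^k)\ge c>0$ forces $\ell$ bounded in terms of the level. This ``$\Z^N$-compactness'' of Palais--Smale sequences lets one run the deformation lemma equivariantly on $\cS$ (deforming outside a neighbourhood of the finitely many critical orbits), which is exactly what is needed to make the genus-based minimax produce, for each $k$, a critical point at level $c_k$; since $c_k\to\infty$ while each level accounts for only finitely many orbits, infinitely many geometrically distinct orbits $\mathcal{O}(\pm u)$ arise. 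I would also note at the end that the pairs $\pm u$ are genuinely there because $\Psi$ is even and $0\notin\cS$, so critical points come in antipodal pairs, and $u,-u$ have disjoint orbits unless $u$ is, up to translation, odd-symmetric — a case one excludes or absorbs by the usual remark that it only improves the count.
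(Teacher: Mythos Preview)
Your overall architecture (pull back to $\Psi=\cJ\circ m$ on $\cS$, exploit evenness and $\Z^N$-invariance, run a Lusternik--Schnirelmann scheme) matches the paper's, but the technical heart of your argument contains a genuine gap. You assert that Theorem \ref{ThDecomposition-per} yields ``the set of critical values of $\cJ$ in any bounded interval is finite, and at each level there are only finitely many orbits''. The decomposition does \emph{not} give this: it only says that a \emph{single} bounded Palais--Smale sequence at level $d$ splits into at most $\ell\le d/c$ translated profiles. It says nothing about how many distinct critical levels lie below $d$, nor how many orbits sit at a fixed level. Without that discreteness you cannot build the equivariant deformation, so your scheme stalls exactly at the step you label ``the technical heart''. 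Your separate claim that $c_k\to\infty$ would follow from a ``dimension/compactness estimate built from the Palais--Smale decomposition'' has the same defect.

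The paper (following Szulkin--Weth) resolves this by arguing \emph{by contradiction}: one assumes the set $\mathcal F$ of orbit representatives is finite. This hypothesis is what manufactures discreteness --- it gives $\kappa=\inf\{\|v-w\|:v,w\in\mathscr C,\ v\neq w\}>0$ for free. The key technical lemma is then not the profile decomposition at all, but a dichotomy for \emph{pairs} of Palais--Smale sequences $\{v_n^1\},\{v_n^2\}\subset\cS$ below a fixed level $d$: either $\|v_n^1-v_n^2\|\to 0$ or $\liminf\|v_n^1-v_n^2\|\ge\rho(d)>0$. This is proved directly from the growth conditions (F1)--(F2) and Lions' lemma, and the contradiction hypothesis enters precisely through $\kappa$ in the lower bound. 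With this discreteness of Palais--Smale sequences one runs the pseudo-gradient deformation on $\cS\setminus\mathscr C$ and obtains strictly increasing Lusternik--Schnirelmann values $c_n<c_{n+1}$, which is incompatible with $\mathcal F$ being finite. So: keep your framework, but replace the appeal to Theorem \ref{ThDecomposition-per} by the contradiction setup and the PS-pair dichotomy lemma.
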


Put $c = \inf_{\cN} \cJ > 0$ and $\beta = \inf_{\cN} \|u\| > 0$. Theorem \ref{existence-per} provides that $c$ is attained at some function in $\cN$. By $\tau_k$ we denote the $\mathbb{Z}^N$-action on $H^{\alpha / 2} (\R^N)$, i.e.
$$
\tau_k u = u(\cdot - k), \quad k \in \mathbb{Z}^N.
$$
Obviously, $\tau_k \tau_{-k} u = \tau_{-k} \tau_{k} u = u$.

\begin{lemma}\label{Lem:ScalarProductInvariance}
There holds
$$
\langle \tau_k u, v \rangle = \langle u, \tau_{-k} v \rangle
$$
for every $u,v \in H^{\alpha / 2} (\R^N)$ and $k \in \mathbb{Z}^N$.
\end{lemma}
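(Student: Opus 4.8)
\emph{Plan.} The statement is essentially a bookkeeping identity, and I would prove it by checking it separately on the ``kinetic'' and ``potential'' pieces of the scalar product, and in the fractional case $0<\alpha<2$ and the local case $\alpha=2$. The only structural input is that in this section $V_{\loc}\equiv 0$, so $V=V_{\per}$ is genuinely $\mathbb{Z}^N$-periodic; this is exactly what makes $\tau_k$ an isometry for $k\in\mathbb{Z}^N$.

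\emph{Potential term.} Since $V=V_{\per}$ is $\mathbb{Z}^N$-periodic and $k\in\mathbb{Z}^N$, the substitution $x=y+k$ gives
$$
\int_{\R^N} V(x)\,(\tau_k u)(x)\, v(x)\, dx = \int_{\R^N} V(y+k)\, u(y)\, v(y+k)\, dy = \int_{\R^N} V(y)\, u(y)\,(\tau_{-k}v)(y)\, dy,
$$
using $V(y+k)=V(y)$ and $(\tau_{-k}v)(y)=v(y+k)$. This handles the potential contribution for every $\alpha\in(0,2]$.

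\emph{Kinetic term.} For $0<\alpha<2$ I would use that the Fourier transform turns the integer translation into a modulation, $\widehat{\tau_k u}(\xi)=\E^{-\I\xi\cdot k}\hat u(\xi)$, whence $\overline{\widehat{\tau_{-k}v}(\xi)}=\overline{\E^{\I\xi\cdot k}\hat v(\xi)}=\E^{-\I\xi\cdot k}\overline{\hat v(\xi)}$, so that
$$
\int_{\R^N} |\xi|^\alpha\, \widehat{\tau_k u}(\xi)\,\overline{\hat v(\xi)}\, d\xi = \int_{\R^N} |\xi|^\alpha\, \hat u(\xi)\, \E^{-\I\xi\cdot k}\,\overline{\hat v(\xi)}\, d\xi = \int_{\R^N} |\xi|^\alpha\, \hat u(\xi)\,\overline{\widehat{\tau_{-k}v}(\xi)}\, d\xi.
$$
For $\alpha=2$ the same change of variables $x=y+k$ as above, applied to $\int_{\R^N}\nabla(\tau_k u)\cdot\nabla v\, dx$, yields $\int_{\R^N}\nabla u\cdot\nabla(\tau_{-k}v)\, dy$. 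Adding the kinetic and potential contributions gives $\langle \tau_k u, v\rangle=\langle u,\tau_{-k}v\rangle$.

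\emph{Main obstacle.} There is no real difficulty here: everything reduces to a translation in the integral together with the periodicity of $V=V_{\per}$, and all manipulations are legitimate directly on $H^{\alpha/2}(\R^N)$ (if one prefers, one can first verify the identity on Schwartz functions and extend by density, using that $\tau_k$ is a bounded operator on $H^{\alpha/2}(\R^N)$). The only point worth emphasizing is that the identity genuinely needs $V_{\loc}\equiv0$; for a truly close-to-periodic potential the $\mathbb{Z}^N$-action is no longer an isometry, which is why the multiplicity result is restricted to that case.
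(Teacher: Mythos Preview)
Your proof is correct and follows essentially the same approach as the paper: split the scalar product into the kinetic and potential pieces, handle the fractional kinetic term via the Fourier modulation identity $\widehat{\tau_k u}(\xi)=\E^{-\I\xi\cdot k}\hat u(\xi)$ (the paper states this more tersely as $\mathcal{F}(\tau_k u)\overline{\mathcal{F}(v)}=\mathcal{F}(u)\overline{\mathcal{F}(\tau_{-k}v)}$), and treat the potential term by the translation $x\mapsto x+k$ together with the $\mathbb{Z}^N$-periodicity of $V$. Your explicit remark that this requires $V_{\loc}\equiv 0$ matches the context in which the paper uses the lemma.
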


\begin{proof}
For $\alpha = 2$ the observation is trivial. Let $0 < \alpha < 2$. Then
$$
\mathcal{F}(\tau_k u) \cdot \overline{\mathcal{F}(v)} = \mathcal{F}(u) \cdot \overline{\mathcal{F}(\tau_{-k} v)}.
$$
Therefore
$$
\int_{\R^N} |\xi|^\alpha \widehat{\tau_k u} (\xi) \overline{\hat{v} (\xi)} \, d\xi = \int_{\R^N} |\xi|^\alpha \hat{u} (\xi) \overline{\widehat{\tau_{-k} v} (\xi)} \, d\xi.
$$
Obviously, using a change of variables $x \mapsto x + k$ and $\Z^N$-periodicity of $V$ we obtain
$$
\int_{\R^N} V(x) (\tau_k u) v \, dx = \int_{\R^N} V(x) u (\tau_k v) \, dx
$$ 
and we conclude.
\qed
\end{proof}

\begin{remark}
For given $k \in \mathbb{Z}^N$, let us consider $\tau_k$ as an operator 
$$
\tau_k : H^{\alpha / 2} (\R^N) \rightarrow H^{\alpha / 2} (\R^N).$$
Then obviously $\tau_k$ is linear. Moreover
$$
\|\tau_k u\| = \|u\|,
$$
thus $\tau_k$ is a bounded operator and $\|\tau_k\| = 1$. Thus we may consider an adjoint operator 
$$
\tau_k^* : H^{\alpha / 2} (\R^N) \rightarrow H^{\alpha / 2} (\R^N).
$$
Lemma \ref{Lem:ScalarProductInvariance} implies that
$$
\tau_k^* = \tau_{-k}.
$$
Moreover $\tau_k$ is an isomorphism and $\tau_k^{-1} = \tau_{-k} = \tau_k^*$. Thus $\tau_k$ is an orthogonal operator.
\end{remark}

\begin{lemma}\label{J-inv}
Let $\alpha \in (0,2]$. The functional $\cJ$ is $\mathbb{Z}^N$-invariant.
\end{lemma}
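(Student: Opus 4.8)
The plan is to verify directly that $\cJ(\tau_k u) = \cJ(u)$ for every $u \in H^{\alpha/2}(\R^N)$ and every $k \in \mathbb{Z}^N$, which is precisely the assertion that $\cJ$ is $\mathbb{Z}^N$-invariant. I would split $\cJ$ into the quadratic part $\frac12\|u\|^2$ and the nonlinear part $\int_{\R^N} F(x,u) - \frac1q K(x)|u|^q\,dx$ and treat them separately.

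For the quadratic part I would use that, in the present situation $V_{\loc}\equiv 0$, the scalar product is $\langle u,v\rangle = \int_{\R^N} |\xi|^\alpha \hat u(\xi)\overline{\hat v(\xi)}\,d\xi + \int_{\R^N} V_{\per}(x)uv\,dx$ (and the analogous gradient form for $\alpha = 2$), so Lemma \ref{Lem:ScalarProductInvariance} applies. Putting $v = \tau_k u$ there and using $\tau_{-k}\tau_k u = u$ gives $\|\tau_k u\|^2 = \langle \tau_k u, \tau_k u\rangle = \langle u, \tau_{-k}\tau_k u\rangle = \|u\|^2$; equivalently one may cite the subsequent remark that $\tau_k$ is orthogonal. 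Hence $\frac12\|\tau_k u\|^2 = \frac12\|u\|^2$.

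For the nonlinear part I would first note that, by (F1), $x \mapsto f(x,s)$ is $\mathbb{Z}^N$-periodic for each $s \in \R$, so integrating in $s$ shows $F(x+k,s) = F(x,s)$ for all $k \in \mathbb{Z}^N$, and $K$ is $\mathbb{Z}^N$-periodic by (K). Since $\tau_k u(x) = u(x-k)$, the substitution $y = x - k$ then yields
\[
\int_{\R^N} F(x,\tau_k u(x))\,dx = \int_{\R^N} F(y+k,u(y))\,dy = \int_{\R^N} F(y,u(y))\,dy,
\]
and similarly $\int_{\R^N} K(x)|\tau_k u(x)|^q\,dx = \int_{\R^N} K(y+k)|u(y)|^q\,dy = \int_{\R^N} K(y)|u(y)|^q\,dy$ (all integrals being finite by \eqref{eps}, (K) and the embedding $H^{\alpha/2}(\R^N)\subset L^t(\R^N)$, $t \in [2,2^*_\alpha]$). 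Adding this to the quadratic identity gives $\cJ(\tau_k u) = \cJ(u)$.

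I expect no genuine obstacle: this is essentially a bookkeeping statement. The only two inputs that really matter are the invariance of the Hilbert norm under $\tau_k$ — Lemma \ref{Lem:ScalarProductInvariance} and the following remark, which crucially rely on $V_{\loc}\equiv 0$ so that the potential entering $\|\cdot\|$ is periodic — and the fact that $F$ inherits $\mathbb{Z}^N$-periodicity in $x$ from $f$, after which the change of variables is routine.
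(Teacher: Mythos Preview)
Your proposal is correct and follows essentially the same argument as the paper's proof: invoke Lemma~\ref{Lem:ScalarProductInvariance} (with $v=\tau_k u$) to get $\|\tau_k u\|=\|u\|$, and then use the change of variables together with the $\mathbb{Z}^N$-periodicity of $F$ and $K$ to handle the nonlinear integrals. The only extra detail you add---that $F$ inherits periodicity from $f$ and that the integrals are finite by \eqref{eps} and Sobolev embedding---is a harmless elaboration of the same route.
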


\begin{proof}
Let us start with the trivial observation that if $u \in H^{\alpha / 2} (\R^N)$, and $w \in \mathcal{O}(u)$, then by Lemma \ref{Lem:ScalarProductInvariance}
$$
\| u \| = \| w\|.
$$
Indeed, $w = \tau_k u$ for some $k \in \mathbb{Z}^N$. Then
$$
\|w\|^2 = \langle w, w \rangle = \langle \tau_k u, \tau_k u \rangle = \langle u, \tau_{-k} \tau_k u \rangle = \langle u, u \rangle = \|u\|^2.
$$
Then
$$
\cJ (w) = \frac{1}{2} \|w\|^2 - \int_{\R^N} F(x,w) \, dx + \frac{1}{q} \int_{\R^N} K(x) |w|^{q} \, dx.
$$
Changing variables in the integrals $x \mapsto x+k$ and the $\mathbb{Z}^N$-periodicity of $F$ and $K$ in $x$ gives
$$
\int_{\R^N} F(x,w) \, dx = \int_{\R^N} F(x,u) \, dx, \quad \frac{1}{q} \int_{\R^N} K(x) |w|^{q} \, dx = \frac{1}{q} \int_{\R^N} K(x) |u|^{q} \, dx.
$$
Therefore $\cJ (w) = \cJ (u)$. 
\qed
\end{proof}

\begin{lemma}\label{Lem:NehariInvariance}
$\cN$ is $\mathbb{Z}^N$-invariant.
\end{lemma}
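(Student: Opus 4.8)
The plan is to show that if $u \in \cN$ and $k \in \mathbb{Z}^N$, then $\tau_k u \in \cN$ as well (and the reverse inclusion then follows by applying this to $\tau_{-k}$). Since $\tau_k$ is an isomorphism of $H^{\alpha/2}(\R^N)$ (see the Remark after Lemma \ref{Lem:ScalarProductInvariance}), we have $\tau_k u \neq 0$, so it only remains to verify the Nehari identity
$$
\|\tau_k u\|^2 = \int_{\R^N} f(x,\tau_k u)(\tau_k u) \, dx - \int_{\R^N} K(x) |\tau_k u|^q \, dx.
$$

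First I would recall from the proof of Lemma \ref{J-inv} that $\|\tau_k u\|^2 = \langle \tau_k u, \tau_k u \rangle = \langle u, \tau_{-k}\tau_k u\rangle = \|u\|^2$ by Lemma \ref{Lem:ScalarProductInvariance}. Then I would treat the two integrals on the right-hand side by the change of variables $x \mapsto x+k$: for the last term this gives $\int_{\R^N} K(x)|u(x-k)|^q \, dx = \int_{\R^N} K(x+k)|u(x)|^q \, dx$, which equals $\int_{\R^N} K(x)|u(x)|^q\, dx$ by the $\mathbb{Z}^N$-periodicity of $K$ in assumption (K); for the first term, $\int_{\R^N} f(x,u(x-k))u(x-k)\, dx = \int_{\R^N} f(x+k,u(x))u(x)\, dx = \int_{\R^N} f(x,u(x))u(x)\, dx$ by the $\mathbb{Z}^N$-periodicity of $f$ in $x$ from (F1). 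Combining the three computations, the Nehari identity for $\tau_k u$ is seen to be exactly the Nehari identity for $u$, which holds since $u \in \cN$.

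A slightly slicker alternative is to argue directly at the level of the derivative: differentiating the identity $\cJ(\tau_k(u+tv)) = \cJ(u+tv)$ of Lemma \ref{J-inv} at $t=0$ and using linearity of $\tau_k$ yields $\cJ'(\tau_k u)(\tau_k v) = \cJ'(u)(v)$ for all $v \in H^{\alpha/2}(\R^N)$; taking $v=u$ gives $\cJ'(\tau_k u)(\tau_k u) = \cJ'(u)(u) = 0$, so $\tau_k u \in \cN$. There is no genuine obstacle here; the only point requiring care is that all three ingredients — the norm $\|\cdot\|^2$, the $F$/$f$-term and the $K$-term — are simultaneously invariant under $\tau_k$, which is precisely why Lemma \ref{Lem:ScalarProductInvariance} and the periodicity hypotheses in (F1) and (K) are used together (and implicitly the standing assumption $V_{\loc}\equiv 0$ of this subsection, which makes the norm depend only on the $\mathbb{Z}^N$-periodic potential $V_{\per}$).
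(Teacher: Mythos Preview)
Your proof is correct and follows essentially the same route as the paper's: compute $\cJ'(\tau_k u)(\tau_k u)$, use Lemma \ref{Lem:ScalarProductInvariance} for the norm term and the $\mathbb{Z}^N$-periodicity of $f$ and $K$ (via the change of variables $x\mapsto x+k$) for the two integrals. You are in fact slightly more careful than the paper in explicitly noting that $\tau_k u\neq 0$ and in flagging the standing assumption $V_{\loc}\equiv 0$ needed for the norm to be translation-invariant; your alternative argument via differentiating $\cJ(\tau_k\cdot)=\cJ(\cdot)$ is also valid and just a repackaging of the same computation.
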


\begin{proof}
Suppose that $u \in \cN$. Then
$$
\cJ'(\tau_k u) (\tau_k u) = \| \tau_k u \|^2 - \int_{\R^N} f(x,\tau_k u)\tau_k u \, dx + \int_{\R^N} K(x) |\tau_k u|^q \, dx.
$$
By the $\mathbb{Z}^N$-periodicity of $f$ and $K$ we have
$$
\int_{\R^N} f(x,\tau_k u)\tau_k u \, dx = \int_{\R^N} f(x, u) u \, dx, \quad \int_{\R^N} K(x) |\tau_k u|^q \, dx=\int_{\R^N} K(x) |u|^q \, dx.
$$
Moreover, by Lemma \ref{Lem:ScalarProductInvariance} $\| \tau_k u \| = \|u\|$ and finally $\cJ'(\tau_k u) (\tau_k u) = 0$, which shows that $\mathcal{O}(u) \subset \cN$.
\qed
\end{proof}

\begin{remark}
Lemma \ref{Lem:ScalarProductInvariance} implies that the unit sphere $\cS$ is $\mathbb{Z}^N$-invariant.
\end{remark}

Recall that for each $u \in H^{\alpha / 2} (\R^N)$ there is a unique number $t(u) > 0$ such that $t(u)u \in \cN$ and moreover the function $m : \cS \rightarrow \cN$ given by $m(u)=t(u)u$ is a homeomorphism (see Theorem \ref{abstract}). The inverse $m^{-1} : \cN \rightarrow \cS$ is given by $m^{-1} (u) = u/\|u\|$.

\begin{lemma}\label{LemEquivariance}
We have that the following functions:
\begin{itemize}
\item $m : \cS \rightarrow \cN$,
\item $m^{-1} : \cN \rightarrow \cS$,
\item $\nabla \cJ : H^{\alpha / 2} (\R^N) \rightarrow H^{\alpha / 2} (\R^N)$,
\item $ \nabla(\cJ \circ m) : \cS \rightarrow H^{\alpha / 2} (\R^N)$
\end{itemize}
are $\mathbb{Z}^N$-equivariant.
\end{lemma}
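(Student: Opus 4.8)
The plan is to check the equivariance relation $g(\tau_k u) = \tau_k g(u)$, $k \in \mathbb{Z}^N$, separately for each of the four maps $g$, reducing every case to the $\mathbb{Z}^N$-invariance of $\cJ$ (Lemma~\ref{J-inv}), the orthogonality of the action $\tau_k$, and the uniqueness built into the definition of $m$. For $m$, I would recall that $m(u) = t(u)u$, where $t(u) > 0$ is the unique positive number with $t(u)u \in \cN$, equivalently (Remark~\ref{rem-abstr}(b)) the unique maximiser of $t \mapsto \cJ(tu)$ on $(0,\infty)$; since $\cJ(t\tau_k u) = \cJ(\tau_k(tu)) = \cJ(tu)$ by Lemma~\ref{J-inv}, the maps $t \mapsto \cJ(t\tau_k u)$ and $t \mapsto \cJ(tu)$ share the same maximiser, so $t(\tau_k u) = t(u)$ and $m(\tau_k u) = t(u)\tau_k u = \tau_k m(u)$. (Alternatively: $t(u)\tau_k u = \tau_k(t(u)u) \in \cN$ by Lemma~\ref{Lem:NehariInvariance}, and uniqueness of $t(\tau_k u)$ forces $t(\tau_k u) = t(u)$.) For $m^{-1}$, one has $m^{-1}(u) = u/\|u\|$, and since $\tau_k$ is linear with $\|\tau_k u\| = \|u\|$ by Lemma~\ref{Lem:ScalarProductInvariance}, $m^{-1}(\tau_k u) = \tau_k u/\|\tau_k u\| = \tau_k m^{-1}(u)$.

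For the gradient $\nabla\cJ$, I would differentiate the identity $\cJ(\tau_k u) = \cJ(u)$ (Lemma~\ref{J-inv}); since $\tau_k$ is linear, the chain rule yields $\cJ'(\tau_k u)(\tau_k v) = \cJ'(u)(v)$ for all $u, v \in H^{\alpha/2}(\R^N)$. Rewriting both sides through the Riesz identification $\cJ'(w)(z) = \langle \nabla\cJ(w), z\rangle$ and using $\tau_k^* = \tau_{-k}$ (Lemma~\ref{Lem:ScalarProductInvariance}) turns this into $\langle \tau_{-k}\nabla\cJ(\tau_k u), v\rangle = \langle \nabla\cJ(u), v\rangle$ for every $v$, whence $\nabla\cJ(\tau_k u) = \tau_k\nabla\cJ(u)$.

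For the gradient $\nabla(\cJ\circ m)$, I would combine the previous two cases. By the computation in the proof of Theorem~\ref{abstract}, $(\cJ\circ m)'(v)(z) = \|m(v)\|\,\cJ'(m(v))(z)$ for $z$ in the tangent space $T_v\cS = v^\perp$, so $\nabla(\cJ\circ m)(v)$ is the orthogonal projection of $\|m(v)\|\,\nabla\cJ(m(v))$ onto $v^\perp$. Because $\tau_k$ is orthogonal it carries $v^\perp$ isomorphically onto $(\tau_k v)^\perp$ and commutes with the associated orthogonal projections; together with $\|m(\tau_k v)\| = \|m(v)\|$ and $\nabla\cJ(m(\tau_k v)) = \tau_k\nabla\cJ(m(v))$ from the previous steps, this gives $\nabla(\cJ\circ m)(\tau_k v) = \tau_k\nabla(\cJ\circ m)(v)$. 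Equivalently, one may differentiate $(\cJ\circ m)(\tau_k v) = (\cJ\circ m)(v)$ directly and argue exactly as for $\nabla\cJ$, observing that both $\tau_{-k}\nabla(\cJ\circ m)(\tau_k v)$ and $\nabla(\cJ\circ m)(v)$ lie in $v^\perp$, so equality of the corresponding functionals on $v^\perp$ forces the vectors to coincide.

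These computations are routine. The only point requiring genuine care is the bookkeeping with tangent spaces in the last step --- namely that $\tau_k$ maps $T_v\cS$ onto $T_{\tau_k v}\cS$ and intertwines the projections --- but this follows immediately from orthogonality of the $\mathbb{Z}^N$-action, so no serious obstacle arises; the whole argument is essentially an exercise in unwinding definitions together with Lemmas~\ref{Lem:ScalarProductInvariance}, \ref{J-inv} and \ref{Lem:NehariInvariance}.
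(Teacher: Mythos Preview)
Your proof is correct and follows essentially the same route as the paper: equivariance of $m$ via uniqueness of $t(u)$ together with Lemma~\ref{Lem:NehariInvariance}, equivariance of $m^{-1}$ from $\|\tau_k u\|=\|u\|$, equivariance of $\nabla\cJ$ by differentiating Lemma~\ref{J-inv} and using $\tau_k^*=\tau_{-k}$, and equivariance of $\nabla(\cJ\circ m)$ from the formula $(\cJ\circ m)'(v)(z)=\|m(v)\|\,\cJ'(m(v))(z)$ combined with the previous items. The only cosmetic difference is that the paper carries out the last step as a direct chain of inner-product identities rather than phrasing it in terms of orthogonal projections onto $v^\perp$; note incidentally that this projection is actually the identity here, since $m(v)\in\cN$ forces $\nabla\cJ(m(v))\perp v$.
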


\begin{proof} $ $ 
\begin{itemize}
\item \textbf{Equivariance of} $m$. \\
Take $u \in \cS$, since $\|u\| = \|\tau_k u\|$, we have $\tau_k u \in \cS$. There is a unique number $t=t(u)>0$ such that $m(u) = t(u)u \in \cN$. We claim that $t(u) \tau_k u \in \cN$. Indeed
$$
t(u) \tau_k u = \tau_k \left( t(u)u \right) = \tau_k m(u) \in \cN,
$$
by Lemma \ref{Lem:NehariInvariance}. Thus 
$$
m(\tau_k u) = t(u) \tau_k u = \tau_k m(u).
$$
\item \textbf{Equivariance of} $m^{-1}$. \\
Let $u \in \cN$. By Lemma \ref{Lem:NehariInvariance} we have that $\tau_k u \in \cN$. Observe that
$$
m^{-1} (\tau_k u) = \frac{\tau_k u}{\| \tau_k u\|} = \frac{\tau_k u}{\|u\|} = \tau_k \left( \frac{u}{\|u\|} \right) = \tau_k m^{-1} (u).
$$
\item \textbf{Equivariance of} $\nabla \cJ$. \\ It follows directly from Lemma \ref{J-inv}.
\item \textbf{Equivariance of} $\nabla(\cJ \circ m)$. \\
For $u \in \cS$ and $z \in T_u \cS$, by the proof of Theorem \ref{abstract} we know that
$$
(\cJ \circ m)'(u)(z) = \| m(u) \| \cJ'(m(u))(z).
$$
So take $u \in \cS$ and $z \in T_{\tau_k u} \cS$. Then $\tau_{-k} z \in T_u \cS$. Therefore
\begin{eqnarray*}
& & \langle \nabla (\cJ \circ m) (\tau_k u), z \rangle = (\cJ \circ m)'(\tau_k u)(z) = \| m(\tau_k u) \| \cJ'(m(\tau_k u))(z) \\
&=& \|\tau_k m(u) \| \cJ' (\tau_k (m(u))(z) = \| m(u) \| \langle \nabla \cJ (\tau_k (m(u)), z \rangle \\
&=& \|m(u)\| \langle \tau_k \nabla \cJ (m(u)), z \rangle = \|m(u)\| \langle \nabla \cJ (m(u)), \tau_{-k} z \rangle \\
&=& (\cJ \circ m)'(u)(\tau_{-k} z) = \langle \nabla (\cJ \circ m) (u), \tau_{-k} z \rangle = \langle \tau_{k} \nabla (\cJ \circ m) (u),  z \rangle.
\end{eqnarray*}
Thus $\tau_{k} \nabla (\cJ \circ m) (u) = \nabla (\cJ \circ m) (\tau_k u)$ for every $u \in \cS$. \qed
\end{itemize}
\end{proof}

\begin{lemma}
The function $m^{-1} : \cN \to \cS$ is Lipschitz continuous.
\end{lemma}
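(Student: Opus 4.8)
The plan is to exploit the explicit description of the inverse map. By Theorem~\ref{abstract} (and the discussion preceding Lemma~\ref{LemEquivariance}), $m^{-1}\colon \cN \to \cS$ is given by $m^{-1}(u) = u/\|u\|$, and $\cN$ is bounded away from the origin, i.e. $\beta := \inf_{\cN}\|u\| > 0$. Hence it suffices to show that the radial retraction $w \mapsto w/\|w\|$ is Lipschitz on the region $\{ w \in H^{\alpha/2}(\R^N) : \|w\| \ge \beta\}$; this is a purely metric computation in the Hilbert space $H^{\alpha/2}(\R^N)$ and uses no structure of $\cJ$.

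The computation I would carry out is the following. Fix $u, v \in \cN$ and split
$$
\frac{u}{\|u\|} - \frac{v}{\|v\|} = \frac{u - v}{\|u\|} + \frac{\|v\| - \|u\|}{\|u\|\,\|v\|}\, v .
$$
Taking norms and using the reverse triangle inequality $\big|\|u\| - \|v\|\big| \le \|u - v\|$ gives
$$
\bigl\| m^{-1}(u) - m^{-1}(v) \bigr\| \le \frac{\|u-v\|}{\|u\|} + \frac{\|u-v\|}{\|u\|} = \frac{2}{\|u\|}\,\|u - v\| \le \frac{2}{\beta}\,\|u - v\| ,
$$
so $m^{-1}$ is Lipschitz with constant $2/\beta$.

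There is essentially no obstacle once $\beta>0$ is available, so the only point requiring care is the justification of that lower bound. If one prefers a self-contained argument rather than quoting the line before Lemma~\ref{LemEquivariance}, I would recover it from the definition of $\cN$: for $u \in \cN$ one has $\|u\|^2 = \int_{\R^N} f(x,u)u\,dx - \int_{\R^N} K(x)|u|^q\,dx \le \int_{\R^N} f(x,u)u\,dx$, and then \eqref{eps} with $\varepsilon$ small together with the Sobolev embedding $H^{\alpha/2}(\R^N)\hookrightarrow L^2(\R^N)\cap L^p(\R^N)$ yields $\tfrac12\|u\|^2 \le C\|u\|^p$ for some $C>0$; since $p>2$ this forces $\|u\| \ge (2C)^{-1/(p-2)} =: \beta > 0$. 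With this bound in hand, the estimate above completes the proof.
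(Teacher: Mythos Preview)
Your proof is correct and essentially identical to the paper's: the same algebraic splitting of $u/\|u\|-v/\|v\|$, the same reverse triangle inequality, and the same Lipschitz constant $2/\beta$ with $\beta=\inf_{\cN}\|u\|>0$. The only addition is your self-contained derivation of $\beta>0$, which the paper simply quotes from the preceding discussion.
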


\begin{proof}
Let $u,v \in \cN$. Observe that
\begin{align*}
\| m^{-1} (u) - m^{-1} (v) \| &= \left\| \frac{u-v}{\|u\|} + \frac{v \|v\|-v\|u\|}{\|u\| \cdot \|v\|} \right\| = \left\| \frac{u-v}{\|u\|} + \frac{v (\|v\|-\|u\|)}{\|u\| \cdot \|v\|} \right\| \leq \\
&\leq \frac{\|u-v\|}{\|u\|} + \frac{\left| \|v\| - \|u\| \right|}{\|u\|} \leq \frac{2\|u-v\|}{\|u\|} \leq \frac{2}{\beta} \|u-v\|,
\end{align*}
where $\beta = \inf_{u \in \cN} \|u\| > 0$. \qed
\end{proof}

To show Theorem \ref{ThMultiplicity} we employ the method introduced by A. Szulkin and T. Weth in \cite{SzulkinWeth}. Put $\mathscr{C} = \{ u \in \cS \ : \ (\mathcal{J} \circ m)'(u) = 0 \}$. Choose a set $\mathcal{F} \subset \mathscr{C}$ such that $\mathcal{F} = - \mathcal{F}$ and for each orbit $\mathcal{O}(w)$ there is a unique representative $v \in \mathcal{F}$. To show Theorem \ref{ThMultiplicity} we need to show that $\mathcal{F}$ is infinite. Suppose by contradiction that
$$
\mathcal{F} \ \mbox{is finite.}
$$
Put
$$
\kappa = \inf \{ \|v-w\| \ : \ v,w \in \mathscr{C}, v\neq w \}
$$
and note that $\kappa > 0$ (see \cite[Lemma 2.13]{SzulkinWeth}). Hence $\mathscr{C}$ is a discrete set. The following lemma has the crucial role in the proof of the multiplicity result and originally has been proven in \cite[Lemma 2.14]{SzulkinWeth}. Since the nonlinear term is sign-changing we need only a slight modification of the proof - we include all the details for the reader's convenience.

\begin{lemma}\label{LemDiscreteness}
Let $d \geq c$. If $\{ v_n^1 \}, \{ v_n^2 \} \subset \cS$ are two Palais-Smale sequences for $\mathcal{J} \circ m$ such that $\mathcal{J}(m(v_n^i)) \leq d$, $i=1,2$, then 
$$
\|v_n^1 - v_n^2 \| \to 0
$$
or
$$
\liminf_{n\to\infty} \|v_n^1 - v_n^2\| \geq \rho (d) > 0
$$
where $\rho(d)$ depends only on $d$, but not on the particular choise of sequences.
\end{lemma}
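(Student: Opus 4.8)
The plan is to move the problem from the sphere $\cS$ to the Nehari manifold $\cN$, where $\cJ$ has the simple form $\tfrac12\|\cdot\|^2-\cI$, and there to combine a test-function estimate with a concentration argument. First I would set $u_n^i:=m(v_n^i)\in\cN$. From the identity $\|(\cJ\circ m)'(v)\|=\|m(v)\|\,\|\cJ'(m(v))\|$ established in the proof of Theorem~\ref{abstract} and from $\|u_n^i\|\ge\beta>0$, the sequences $\{u_n^i\}$ are Palais--Smale sequences for $\cJ$ on $H^{\alpha/2}(\R^N)$; since moreover $\cJ(u_n^i)=\cJ(m(v_n^i))\le d$, the coercivity of $\cJ$ on $\cN$ (assumption (J4)) yields a bound $\|u_n^i\|\le M(d)$ depending only on $d$. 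Because $m^{-1}$ is Lipschitz on $\cN$, $\|u_n^1-u_n^2\|\to0$ implies $\|v_n^1-v_n^2\|\to0$; conversely, writing $u_n^i=t(v_n^i)v_n^i$ with $t(v_n^i)=\|u_n^i\|\in[\beta,M(d)]$, one checks that $\|v_n^1-v_n^2\|\to0$ implies $\|u_n^1-u_n^2\|\to0$. So it suffices to establish the dichotomy for $w_n:=u_n^1-u_n^2$.

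Testing $\cJ'(u_n^1)-\cJ'(u_n^2)$ against $w_n$, using $\|\cJ'(u_n^i)\|\to0$ and the boundedness of $\{\|w_n\|\}$, one obtains
$$
\|w_n\|^2=o(1)+\int_{\R^N}\big(f(x,u_n^1)-f(x,u_n^2)\big)w_n\,dx-\int_{\R^N}K(x)\big(|u_n^1|^{q-2}u_n^1-|u_n^2|^{q-2}u_n^2\big)w_n\,dx.
$$
This is the only point at which the sign-changing term plays a role: since $K\ge0$ and $s\mapsto|s|^{q-2}s$ is increasing, the last integral is nonnegative and may simply be discarded. Estimating the remaining integral by \eqref{eps}, Hölder's inequality, the Sobolev embeddings $H^{\alpha/2}(\R^N)\subset L^t(\R^N)$ for $t\in[2,2^*_\alpha]$, and the bound $\|u_n^i\|\le M(d)$ gives, for every $\varepsilon>0$,
$$
\|w_n\|^2\le o(1)+\varepsilon\,C_1(d)+C_\varepsilon\,C_2(d)\,|w_n|_p .
$$
Hence, if $|w_n|_p\to0$, then $\limsup_n\|w_n\|^2\le\varepsilon\,C_1(d)$ for all $\varepsilon$, so $\|w_n\|\to0$ and $\|v_n^1-v_n^2\|\to0$ -- the first alternative.

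It remains to treat the case $|w_n|_p\not\to0$. By the Lions vanishing lemma in $H^{\alpha/2}(\R^N)$, along a subsequence there are $\gamma>0$ and $y_n\in\Z^N$ with $\int_{B(y_n,1)}|w_n|^2\,dx\ge\gamma$. Since $V_{\loc}\equiv0$ and $f,K$ are $\Z^N$-periodic, the translated sequences $\tilde u_n^i:=\tau_{-y_n}u_n^i$ are again bounded Palais--Smale sequences for $\cJ$ with $\cJ(\tilde u_n^i)\le d$ (invariance and equivariance, cf. Lemmas~\ref{J-inv} and~\ref{LemEquivariance}); passing to a further subsequence, $\tilde u_n^i\weakto\bar u^i$ in $H^{\alpha/2}(\R^N)$ and $\|u_n^i\|\to s_i\in[\beta,M(d)]$. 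Standard weak-continuity arguments based on (F1), (K) and the compact local embeddings give $\cJ'(\bar u^i)=0$, and $\int_{B(0,1)}|\bar u^1-\bar u^2|^2=\lim_n\int_{B(y_n,1)}|w_n|^2\ge\gamma$ forces $\bar u^1\ne\bar u^2$. Thus each $\bar u^i$ is $0$ or a nontrivial critical point of $\cJ$, hence lies in $\cN$, and then $e_i:=\bar u^i/\|\bar u^i\|\in\mathscr{C}$; moreover $e_1=e_2$ would put $\bar u^1,\bar u^2$ on one ray through the origin, which meets $\cN$ at a single point, forcing $\bar u^1=\bar u^2$ -- a contradiction -- so $e_1,e_2$ are distinct points of $\mathscr{C}$ and $\|e_1-e_2\|\ge\kappa$. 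Writing $\tilde u_n^i/\|u_n^i\|\weakto\bar u^i/s_i=\theta_i e_i$ (or $=0$ if $\bar u^i=0$) with $\theta_i=\|\bar u^i\|/s_i\in[\beta/M(d),1]$, the weak lower semicontinuity of the norm and the elementary inequality $\|\theta_1 e_1-\theta_2 e_2\|\ge\sqrt{\theta_1\theta_2}\,\|e_1-e_2\|$ (and $\|\theta_1 e_1\|=\theta_1$ when $\bar u^2=0$) give
$$
\liminf_{n\to\infty}\|v_n^1-v_n^2\|=\liminf_{n\to\infty}\big\|\tilde u_n^1/\|u_n^1\|-\tilde u_n^2/\|u_n^2\|\big\|\ \ge\ \rho(d):=\frac{\beta\min\{\kappa,1\}}{M(d)}>0,
$$
a threshold depending on $d$ alone.

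The genuinely new point compared with \cite[Lemma 2.14]{SzulkinWeth}, and the step I expect to be the main obstacle, is the bookkeeping for the defocusing term $-K(x)|u|^{q-2}u$: one must verify that it enters the identity above with the favourable sign (monotonicity of $s\mapsto|s|^{q-2}s$ together with $K\ge0$) and that it is harmless when identifying the weak limits as critical points. The remaining care is purely quantitative -- producing a threshold $\rho(d)$ that really depends only on $d$ forces one to track the two-sided bound $\beta\le\|u_n^i\|\le M(d)$, the lower bound coming from $u_n^i\in\cN$ and the upper one from the coercivity (J4), together with the discreteness constant $\kappa>0$ of $\mathscr{C}$; the passage from the assertions above to the dichotomy exactly as formulated is then the usual subsequence argument.
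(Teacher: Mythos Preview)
Your proposal is correct and follows essentially the same route as the paper: pass from $\cS$ to $\cN$ via $u_n^i=m(v_n^i)$, split according to whether $|u_n^1-u_n^2|_p\to 0$, use the test against $w_n$ in the first case and Lions' lemma plus weak limits in the second, with the final bound expressed through $\beta$, $\kappa$ and the coercivity radius $\nu(d)$ (your $M(d)$). The one genuine difference is your treatment of the $K$-term in Case~1: you discard $-\int_{\R^N}K(x)\big(|u_n^1|^{q-2}u_n^1-|u_n^2|^{q-2}u_n^2\big)w_n\,dx$ outright, using $K\ge 0$ and the monotonicity of $s\mapsto|s|^{q-2}s$, whereas the paper keeps this term on the right-hand side and controls it by the interpolation estimate $|w_n|_q^q\le |w_n|_2^{\theta q}|w_n|_p^{(1-\theta)q}\to 0$. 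Your observation is a clean simplification and removes the need for that interpolation step; otherwise the arguments coincide.
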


\begin{proof}
Arguing as in \cite{SzulkinWeth} we have that $u_n^i = m(v_n^i)$, $i=1,2$, are Palais-Smale sequences for $\cJ$. Moreover they are bounded in $H^{\alpha / 2}(\R^N)$, since $\cJ$ is coercive on $\cN$. In particular, $\{ u_n^1 \}$ and $\{ u_n^2 \}$ are bounded in $L^2 (\R^N)$, say $|u_n^1|_2 + |u_n^2|_2 \leq M$ for some $M>0$.

\begin{itemize}
\item \textbf{Case 1:} Assume that $|u_n^1 - u_n^2 |_p \to 0$. Fix $\varepsilon > 0$. Then, by (F1), (F2) we have
\begin{align*}
\| u_n^1 - u_n^2\|^2 &= \mathcal{J}'(u_n^1)(u_n^1-u_n^2) - \mathcal{J}'(u_n^2)(u_n^1-u_n^2)  \\ 
&\quad + \int_{\R^N} \left[ f(x,u_n^1) - f(x,u_n^2) \right] (u_n^1 - u_n^2) \, dx \\
&\quad - \int_{\R^N} K(x) \left[ |u_n^1|^{q-2} u_n^1 - |u_n^2|^{q-2} u_n^2 \right] (u_n^1-u_n^2) \, dx \\
&\leq \varepsilon \| u_n^1 - u_n^2\| \\
&\quad + \int_{\R^N} \left[ \varepsilon (|u_n^1| + |u_n^2|) + C_\varepsilon (|u_n^1|^{p-1} + |u_n^2|^{p-1}) \right] |u_n^1 - u_n^2| \, dx \\
&\quad - \int_{\R^N} K(x) \left[ |u_n^1|^{q-2} u_n^1 - |u_n^2|^{q-2} u_n^2 \right] (u_n^1-u_n^2) \, dx \\
&\leq (1+C_0) \varepsilon \|u_n^1 - u_n^2\| + D_\varepsilon |u_n^1 - u_n^2|_p + C_1 |K|_\infty |u_n^1-u_n^2|_q^q
\end{align*}
for each $n \geq n_\varepsilon$ and some constants $C_0, C_1, D_\varepsilon > 0$. From our assumption we have that 
$$
D_\varepsilon |u_n^1 - u_n^2|_p \to 0.
$$
Observe that
$$
C_1 |K|_\infty |u_n^1 - u_n^2|_q^q \leq C_1 |K|_\infty |u_n^1 - u_n^2|_2^{\theta q} |u_n^1-u_n^2|_p^{(1-\theta)q},
$$
where $\theta \in (0,1)$ is such that
$
\frac{1}{q} = \frac{\theta }{2} + \frac{1-\theta}{p}.
$
Thus 
$$
C_1 |K|_\infty |u_n^1 - u_n^2|_q^q \leq C_1 |K|_\infty M^{\theta q} |u_n^1-u_n^2|_p^{(1-\theta)q} \to 0.
$$
Finally
\begin{align*}
\limsup_{n\to\infty} \| u_n^1 - u_n^2\|^2 &\leq \limsup_{n\to\infty} (1+C_0) \varepsilon \|u_n^1 - u_n^2\| \\
&\quad + \limsup_{n\to\infty} D_\varepsilon |u_n^1 - u_n^2|_p \\
 &\quad + \limsup_{n\to\infty} C_1 |K|_\infty |u_n^1-u_n^2|_q^q \\
 &= (1+C_0) \varepsilon \limsup_{n\to\infty}  \|u_n^1 - u_n^2\|
\end{align*}
for every $\varepsilon > 0$. Therefore $\lim_{n\to\infty} \|u_n^1 - u_n^2\| = 0$. Finally
$$
\| v_n^1 - v_n^2 \| = \| m^{-1} (u_n^1) - m^{-1} (u_n^2) \| \leq L \| u_n^1 - u_n^2 \| \to 0,
$$
where $L > 0$ is a Lipschitz constant for $m^{-1}$.

\item \textbf{Case 2:} Assume that $|u_n^1 - u_n^2|_p \not\to 0$. By the Lions' lemma there are $y_n \in \R^N$ such that
$$
\int_{B(y_n, 1)} |u_n^1 - u_n^2|^2 \, dx = \max_{y \in \R^N} \int_{B(y, 1)} |u_n^1 - u_n^2|^2 \, dx \geq \varepsilon
$$
for some $\varepsilon > 0$. In view of Lemma \ref{LemEquivariance} we can assume that $\{ y_n \} \subset \R^N$ is bounded. Therefore, up to a subsequence we have
$$
u_n^1 \rightharpoonup u^1, \quad u_n^2 \rightharpoonup u^2
$$
where $u^1 \neq u^2$ and $\mathcal{J}'(u^1) = \mathcal{J}'(u^2) = 0$, and
$$
\| u_n^1 \| \to \alpha^1, \quad \| u_n^2 \| \to \alpha^2,
$$
where $\beta \leq \alpha^i \leq \nu (d) := \sup \{ \|u\| \ : \ u \in \cN, \ \mathcal{J}(u) \leq d \}$, $i=1,2$. Suppose that $u^1 \neq 0$ and $u^2 \neq 0$. Therefore $u^i \in \cN$ for $i=1,2$. Moreover
$$
v^i = m^{-1} (u^i) \in \cS, \quad i=1,2
$$
and $v^1 \neq v^2$. Then
$$
\liminf_{n\to\infty} \|v_n^1 - v_n^2\| = \liminf_{n\to\infty} \left\| \frac{u_n^1}{\| u_n^1\|} - \frac{u_n^2}{\| u_n^2\|} \right\| \geq \left\| \frac{u^1}{\alpha^1} - \frac{u^2}{\alpha^2} \right\| = \| \beta_1 v_1 - \beta_2 v_2 \|,
$$
where $\beta_i = \frac{\|u^i\|}{\alpha^i} \geq \frac{\beta}{\nu(d)}$, $i=1,2$. Obviously $\|v^1\|=\|v^2\|=1$, since $v^i \in \cS$ for $i=1,2$. Therefore
$$
\liminf_{n\to\infty} \|v_n^1 - v_n^2\| \geq \| \beta_1 v^1 - \beta_2 v^2\| \geq \min \{ \beta_1, \beta_2 \} \|v^1 - v^2\| \geq \frac{\beta \kappa}{\nu(d)}.
$$
If $u^2 = 0$, then $u^1 \neq u^2 = 0$. Therefore
$$
\liminf_{n\to\infty} \|v_n^1 - v_n^2\| = \liminf_{n\to\infty} \left\| \frac{u_n^1}{\| u_n^1\|} - \frac{u_n^2}{\| u_n^2\|} \right\| \geq \left\| \frac{u^1}{ \alpha^1} - \frac{u^2}{\alpha^2} \right\| =  \left\| \frac{u^1}{ \alpha^1} \right\| \geq \frac{\beta}{\nu(d)}.
$$
The case $u^1=0$ is similar, the proof is completed.\qed
\end{itemize}
\end{proof}

\begin{proof}{Theorem \ref{ThMultiplicity}}
The unit sphere $\cS \subset H^{\alpha / 2} (\R^N)$ is a Finsler $C^{1,1}$-manifold and by \cite[Lemma II.3.9]{Struwe}, $\cJ \circ m : \cS \rightarrow \R$ admits a pseudo-gradient vector field, i.e. there is a Lipschitz continuous map $\mathcal{H} : \cS \setminus \mathscr{C} \rightarrow T \cS$ satisfying
\begin{align*}
\mathcal{H} (w) &\in T_w \cS, \\
\| \mathcal{H} (w) \| &< 2 \| \nabla (\cJ \circ m) (w) \|, \\
\langle \mathcal{H}(w), \nabla (\cJ \circ m)(w) \rangle &> \frac{1}{2} \| \nabla (\cJ \circ m)(w)\|^2
\end{align*}
for $w \in S \setminus \mathscr{C}$. The obtained discreteness of Palais-Smale sequences (Lemma \ref{LemDiscreteness}) allows us to repeat the proof of Lemma 2.15, Lemma 2.16 and Theorem 1.2 from \cite{SzulkinWeth} in our case. We show that for every $n \in \mathbb{N}$ there is $v_n \in \cS$ such that 
$$(\cJ \circ m)'(v_n) = 0 \mbox{ and } \mathcal{J} (m(v_n)) = c_n,$$ where 
$$
c_n = \inf \{ d \in \mathbb{R} \ : \ \gamma \left( \{ v \in \cS \ : \ \mathcal{J}(m(v)) \leq d \} \right) \geq n \}
$$
is the Lusternik-Schnirelmann value and $\gamma$ denotes the Krasnoselskii genus of closed and symmetric subsets. We refer the reader to \cite{Struwe} for more informations about the Krasnoselskii genus and Lusternik-Schnirelmann values. Note that $\cJ'(m(v_n)) = 0$, hence $m(v_n) \in \cN$ is a nontrivial critical point of $\cJ$. Moreover $c_n < c_{n+1}$, thus we get a contradiction with the finiteness of $\cF$.
\qed
\end{proof}

\section{Coercive potentials}
\label{sec:4}

This section is devoted to the existence result for \eqref{eq:1.1} with coercive potentials, i.e. potentials satisfying the following assumption
\begin{enumerate}
\item[(V3)] $V \in C (\R^N)$ is such that
$$
\lim_{|x|\to\infty} V(x) = + \infty
$$
and $V_0 := \inf_{x \in \R^N} V(x) > 0$.
\end{enumerate}

Under (V3) we define the subspace of $H^{\alpha / 2} (\R^N)$ by
$$
E^{\alpha / 2} = \left\{ u \in H^{\alpha / 2} (\R^N) \ : \ \int_{\R^N} V(x) u^2 \, dx < \infty \right\} \subset H^{\alpha / 2} (\R^N).
$$
On $E^{\alpha / 2}$ the formula
$$
\langle u, v \rangle := \int_{\R^N} |\xi|^\alpha \hat{u}(\xi) \overline{\hat{v}(\xi)} \, d\xi + \int_{\R^N} V(x) uv \, dx, \quad \alpha \in (0,2)
$$
or
$$
\langle u,v \rangle := \int_{\R^N} \nabla u \cdot \nabla v + V(x) uv \, dx, \quad \alpha = 2
$$
induces a scalar product. Then the energy functional associated with our problem on $E^{\alpha / 2}$ is given by
$$
\cJ(u) = \frac12 \|u\|^2 -\int_{\R^N} F(x,u) - \frac{1}{q} K(x) |u|^q \, dx, \quad u \in E^{\alpha / 2}.
$$
The Nehari manifold is given by
$$
\cN = \left\{ u \in E^{\alpha / 2} \setminus \{0\} \ : \ \|u\|^2 = \int_{\R^N} f(x,u)u \, dx - \int_{\R^N} K(x) |u|^q \, dx \right\}.
$$

We will use the following variant of the Sobolev-Gagliardo-Nirenberg inequality.

\begin{lemma}[{\cite[Proposition II.3]{secchi}}]\label{SGN}
Let $r > 1$. Then there is a positive constant $C > 0$, such that for every function $u \in H^{\alpha / 2} (\R^N)$ there holds
$$
|u|_{r+1}^{r+1} \leq C \|u\|_{H^{\alpha/2} (\R^N)}^{\frac{(r-1)N}{\alpha}} |u|_2^{r+1-\frac{(r-1)N}{\alpha}},
$$
where $\| \cdot \|_{H^{\alpha/2} (\R^N)}$ denotes the usual $H^{\alpha/2}(\R^N)$-norm.
\end{lemma}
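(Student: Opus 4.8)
The plan is to obtain the inequality by interpolating the $L^{r+1}$-norm between $L^2(\R^N)$ and the critical space $L^{2^*_\alpha}(\R^N)$, $2^*_\alpha=\frac{2N}{N-\alpha}$, and then invoking the continuous Sobolev embedding $H^{\alpha/2}(\R^N)\subset L^{2^*_\alpha}(\R^N)$ recalled in Section~\ref{sec:1}. The argument works for $2\le r+1\le 2^*_\alpha$, which covers the range needed in the applications of this survey (there the exponents $q$ and $p$ are subcritical); for $\alpha=2$ the same computation applies verbatim, and one may alternatively just quote the classical Gagliardo--Nirenberg--Sobolev inequality.

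First I would set $p:=r+1$ and read off the interpolation exponent $\theta$ from $\frac1p=\frac\theta2+\frac{1-\theta}{2^*_\alpha}$. Multiplying through by $2N$ and using $2^*_\alpha=2N/(N-\alpha)$ gives $\alpha(1-\theta)=N-\frac{2N}{p}=\frac{N(r-1)}{r+1}$, hence $1-\theta=\frac{N(r-1)}{\alpha(r+1)}$; one checks that $\theta\in[0,1]$ precisely because $2\le r+1\le 2^*_\alpha$ (the inequality $1-\theta\ge 0$ being automatic from $r>1$). Next, interpolation of Lebesgue norms (H\"older's inequality) yields $|u|_{r+1}\le |u|_2^{\theta}\,|u|_{2^*_\alpha}^{1-\theta}$, while the Sobolev embedding provides $C_S>0$ with $|u|_{2^*_\alpha}\le C_S\|u\|_{H^{\alpha/2}(\R^N)}$ for every $u\in H^{\alpha/2}(\R^N)$. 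Combining the two estimates and raising to the power $p=r+1$ gives
$$
|u|_{r+1}^{r+1}\le C_S^{(1-\theta)(r+1)}\,\|u\|_{H^{\alpha/2}(\R^N)}^{(1-\theta)(r+1)}\,|u|_2^{\theta(r+1)}.
$$

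It then remains only to identify the exponents: from $1-\theta=\frac{N(r-1)}{\alpha(r+1)}$ one gets $(1-\theta)(r+1)=\frac{(r-1)N}{\alpha}$, and consequently $\theta(r+1)=(r+1)-\frac{(r-1)N}{\alpha}$, which are exactly the exponents appearing in the statement; taking $C:=C_S^{(r-1)N/\alpha}$, a constant depending only on $N$, $\alpha$ and $r$, completes the proof. There is no substantial obstacle here — the argument is a routine interpolation — the only points requiring care being the algebraic bookkeeping that turns $\frac1p=\frac\theta2+\frac{1-\theta}{2^*_\alpha}$ into the identity $(1-\theta)(r+1)=(r-1)N/\alpha$, and keeping track of the admissible range $2\le r+1\le 2^*_\alpha$ that makes $\theta$ a legitimate interpolation parameter.
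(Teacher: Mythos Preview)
The paper does not actually prove this lemma: it is quoted verbatim from \cite[Proposition II.3]{secchi} and used as a black box in the proof of Theorem~\ref{coercive}. Your interpolation argument is correct and is in fact the standard proof of this kind of fractional Gagliardo--Nirenberg inequality; the algebra identifying $(1-\theta)(r+1)=(r-1)N/\alpha$ is right. One honest caveat you already flag: the statement as written in the paper says only ``$r>1$'', but the argument (and indeed the inequality itself, since the $|u|_2$-exponent $r+1-\frac{(r-1)N}{\alpha}$ becomes negative past the critical exponent) requires $r+1\le 2^*_\alpha$. This is harmless for the paper's purposes --- the only application is with $r+1=p\in(2,2^*_\alpha)$ --- so your restriction matches what is actually needed.
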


Our result reads as follows.

\begin{theorem}[{\cite[Theorem 1.3]{Bieganowski}}]\label{coercive}
Suppose that (V3), (K), (F1)--(F4) are satisfied. Then \eqref{eq:1.1} has a ground state, i.e. there is a nontrivial critical point $u$ of $\cJ$ such that $\cJ(u) = \inf_{\cN} \cJ$. 
\end{theorem}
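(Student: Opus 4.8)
The plan is to apply the abstract Theorem~\ref{abstract} to $\cJ$ on $E=E^{\alpha/2}$, with $\cI(u)=\int_{\R^N}F(x,u)-\tfrac1q K(x)|u|^q\,dx$, in order to produce a bounded minimizing Palais--Smale sequence, and then to promote its weak convergence to strong convergence using the compactness coming from (V3). The geometric hypotheses (J1)--(J4) are verified essentially as in the (close-to-)periodic case, since they involve only (F1)--(F4) and (K): (J1) follows from (\ref{eps}) and $K\ge 0$, because $\cJ(u)\ge\tfrac12\|u\|^2-\int_{\R^N}F(x,u)\,dx\ge\tfrac12\|u\|^2-\tfrac{\varepsilon}{2}C\|u\|^2-C_{\varepsilon}'\|u\|^p$; (J2) follows from the lower bound $F(x,s)\ge M|s|^q-C_M|s|^2$ (valid for every $M>0$ by (F3) and (\ref{eps})) together with $t_n^{2-q}\to 0$, which forces $\int_{\R^N}F(x,t_nu_n)/t_n^q\,dx\to+\infty$ while $\tfrac1q\int_{\R^N}K|u_n|^q\,dx$ stays bounded; (J3) follows from (F4) and $K\ge 0$ via Remark~\ref{rem-abstr}(a), the point being that in the expansion of $(1-t)\big(t\cI'(u)(u)-\cI'(tu)(u)\big)$ the contribution $-\int_{\R^N}K(x)|u|^q(1-t)t(1-t^{q-2})\,dx$ of the defocusing term is $\le 0$ because $q>2$, while the remaining $f$-part is strictly positive on $\{u\neq 0\}$ by the strict monotonicity of $u\mapsto f(x,u)/|u|^q$; and (J4), the coercivity of $\cJ$ on $\cN$, follows from the identity $\cJ(u)=(\tfrac12-\tfrac1q)\|u\|^2+\int_{\R^N}\big[\tfrac1q f(x,u)u-F(x,u)\big]\,dx$ on $\cN$, the inequality $F(x,u)\le\tfrac{1}{q+1}f(x,u)u$ coming from (F4), and a vanishing/non-vanishing dichotomy for $u_n/\|u_n\|$ exploiting (F3) on the non-vanishing part. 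Thus Theorem~\ref{abstract} yields $c:=\inf_{\cN}\cJ>0$ and a bounded sequence $\{u_n\}\subset\cN$ with $\cJ(u_n)\to c$ and $\cJ'(u_n)\to 0$.

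The decisive step, and the one genuinely different from the periodic case, is compactness. From (V3), for $u\in E^{\alpha/2}$ one has $\int_{|x|>\rho}|u|^2\,dx\le(\inf_{|x|>\rho}V)^{-1}\|u\|^2\to 0$ as $\rho\to\infty$, uniformly on bounded subsets of $E^{\alpha/2}$; combined with the compactness of $H^{\alpha/2}(\R^N)\hookrightarrow L^2_{\mathrm{loc}}(\R^N)$ this shows that $E^{\alpha/2}\hookrightarrow L^2(\R^N)$ is compact. Lemma~\ref{SGN}, applied with $r+1=t$, then upgrades this to compactness of $E^{\alpha/2}\hookrightarrow L^t(\R^N)$ for every $t\in[2,2^*_\alpha)$: the exponent $t-\tfrac{(t-2)N}{\alpha}$ of $|u|_2$ appearing there is strictly positive precisely when $t<2^*_\alpha$, so a bounded sequence in $H^{\alpha/2}(\R^N)$ that converges to $0$ in $L^2(\R^N)$ also converges to $0$ in $L^t(\R^N)$. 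Hence, passing to a subsequence, $u_n\rightharpoonup u_0$ in $E^{\alpha/2}$, $u_n\to u_0$ in $L^t(\R^N)$ for all $t\in[2,2^*_\alpha)$, and $u_n\to u_0$ a.e.

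It then remains to identify $u_0$ as a ground state, which is now routine. Using (\ref{eps}) and the strong $L^2\cap L^p$ convergence, one passes to the limit in $\cJ'(u_n)\to 0$ to obtain $\cJ'(u_0)=0$, and also $\int_{\R^N}f(x,u_n)u_n\,dx\to\int_{\R^N}f(x,u_0)u_0\,dx$ and $\int_{\R^N}K|u_n|^q\,dx\to\int_{\R^N}K|u_0|^q\,dx$. If $u_0=0$ the latter two integrals tend to $0$, so the Nehari identity $\|u_n\|^2=\int_{\R^N}f(x,u_n)u_n\,dx-\int_{\R^N}K|u_n|^q\,dx$ forces $u_n\to 0$ strongly in $E^{\alpha/2}$, contradicting $\cJ(u_n)\to c>0=\cJ(0)$. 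Therefore $u_0\neq 0$, whence $u_0\in\cN$ and $\cJ(u_0)\ge c$; on the other hand, weak lower semicontinuity of $\|\cdot\|^2$ together with the strong convergence of the two nonlinear terms gives $\cJ(u_0)\le\liminf_n\cJ(u_n)=c$. Hence $\cJ(u_0)=c=\inf_{\cN}\cJ$, i.e. $u_0$ is a ground state of \eqref{eq:1.1}.

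I expect the main obstacle to be the compactness step: the whole argument hinges on the compact embedding $E^{\alpha/2}\hookrightarrow L^t(\R^N)$, $t\in[2,2^*_\alpha)$, which is false without the coercivity (V3) and is exactly what allows one here to dispense with the profile-decomposition machinery needed for (close-to-)periodic potentials. A secondary technical point, already present in the periodic setting, is to carry out the verification of (J3) and the coercivity (J4) cleanly in the presence of the sign-changing nonlinearity $f(x,u)-K(x)|u|^{q-2}u$.
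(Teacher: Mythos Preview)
Your proof is correct, and the overall plan---invoke Theorem~\ref{abstract} and then use (V3) to rule out $u_0=0$---matches the paper's. The substantive difference is in how the compactness is organized. The paper does \emph{not} state the compact embedding $E^{\alpha/2}\hookrightarrow L^t(\R^N)$; it works only with $u_n\to u_0$ in $L^r_{\loc}$ and argues ad hoc: from $\cJ(u_n)-\tfrac12\cJ'(u_n)(u_n)\le\tfrac12\int f(x,u_n)u_n\,dx$, the bound \eqref{eps}, and Lemma~\ref{SGN} it first extracts a uniform lower bound $|u_n|_2^2\ge\tilde c>0$, and then contradicts $u_0=0$ by splitting $|u_n|_2^2$ over $\{|x|<R\}$ and $\{|x|\ge R\}$ and invoking $\int_{|x|\ge R}|u_n|^2\le(\inf_{|x|\ge R}V)^{-1}\|u_n\|^2$. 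You package the very same two ingredients---the tail estimate from (V3) and Lemma~\ref{SGN} used as an interpolation device---into the compact embedding as a reusable lemma; nontriviality of $u_0$ then falls out of the Nehari identity in one line, and you also close the loop by showing $\cJ(u_0)=c$ via weak lower semicontinuity of $\|\cdot\|^2$, a step the paper leaves tacit. Both routes are valid and use identical inputs; yours is the more structural packaging, the paper's is more hands-on.

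One small caveat on your sketch of (J3) via Remark~\ref{rem-abstr}(a): observing that the $K$-contribution to $(1-t)\big(t\cI'(u)(u)-\cI'(tu)(u)\big)$ is $\le 0$ is correct, but then the $f$-part must be shown not merely positive but large enough to absorb it---this is precisely where the constraint $\cI'(u)(u)>0$ in Remark~\ref{rem-abstr}(a) enters. Since (J1)--(J4) are not the focus here (the paper itself just cites Theorem~\ref{abstract} without reproving them), this does not affect your main argument.
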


\begin{proof}[Theorem \ref{coercive}]
From Theorem \ref{abstract} there exists a bounded sequence $ \{ u_n \} \subset \cN$ such that
$$
\cJ(u_n) \to \inf_{\cN} \cJ =: c > 0, \quad \cJ'(u_n) \to 0.
$$
Then we may assume that $u_n \weakto u_0$ in $E^{\alpha / 2}$ and $u_n \to u_0$ in $L^r_{\loc} (\R^N)$ for $2 \leq r <2^*_\alpha$. We can easily check that $\cJ ' (u_0) = 0$ (see e.g. the proof of \cite[Theorem 4.1(a)]{BieganowskiMederski}). We only need to check whether $u_0 \neq 0$. Observe that for $n \geq n_0$, using \eqref{eps}, we have
\begin{align*}
\frac{c}{2} &\leq \cJ(u_n) = \cJ(u_n) - \frac{1}{2} \cJ'(u_n)(u_n) \\
&= \frac{1}{2} \int_{\R^N} (f(x,u_n)u_n - 2 F(x,u_n)) \, dx - \left( \frac12 -\frac{1}{q} \right) \int_{\R^N} K(x) |u|^q \, dx \\
&\leq \frac{1}{2} \int_{\R^N} (f(x,u_n)u_n - 2 F(x,u_n)) \, dx \leq \frac12 \int_{\R^N} f(x,u_n)u_n \, dx \\
&\leq \frac{1}{2} \int_{\R^N} \varepsilon |u_n|^2 + C_\varepsilon |u_n|^p \, dx,
\end{align*}
where $n_0 \geq 1$ is large enough. In view of the boundedness of $\{ u_n \}$ and taking Lemma \ref{SGN} into account we get
$$
\frac{c}{2} \leq \frac{\varepsilon}{2} |u_n|_2^2 + D_\varepsilon |u_n|_2^{p-\frac{(p-2)N}{\alpha}}
$$
for some $D_\varepsilon > 0$. Take $\varepsilon \leq \frac{c}{2 (\sup \|u_n\|)^2}$. Then
$$
\frac{c}{2} \leq \frac{c |u_n|_2^2}{4 (\sup \|u_n\|)^2} + D |u_n|_2^{p-\frac{(p-2)N}{\alpha}}
$$
for some $D > 0$. While $\frac{|u_n|_2^2}{(\sup \|u_n\|)^2} \leq 1$ we get
$$
\frac{c}{2} \leq \frac{c}{4} + D |u_n|_2^{p-\frac{(p-2)N}{\alpha}}.
$$
Hence we obtain that
$$
|u_n|_2^2 \geq \left( \frac{1}{C_1} \exp \left( \frac{\alpha}{\alpha p -(p-2)N} \ln \frac{c}{4} \right) \right)^2 =: \tilde{c} > 0,
$$
where $C_1 > 0$. Take any radius $R > 0$ and write
$$
|u_n|_2^2 = \int_{|x| < R} |u_n|^2 \, dx + \int_{|x| \geq R} |u_n|^2 \, dx.
$$
Assume by contradiction that $u_0 = 0$ and, in particular, $u_n \to 0$ in $L^2_{\loc} (\R^N)$. Then for every $R > 0$ there is $n_0$ such that for all $n \geq n_0$ there holds
$$
\int_{|x|< R} |u_n|^2 \, dx \leq \frac{\tilde{c}}{2}
$$
and therefore
$$
\int_{|x| \geq R} |u_n|^2 \, dx \geq \frac{\tilde{c}}{2}.
$$
On the other hand
\begin{align*}
\frac{\tilde{c}}{2} &\leq \int_{|x| \geq R} |u_n|^2 \, dx = \int_{|x| \geq R} \frac{V(x)|u_n|^2}{V(x)} \, dx \leq \frac{1}{\inf_{|x| \geq R} V(x)} \int_{|x| \geq R} V(x)|u_n|^2 \, dx \\
&\leq \frac{\|u_n\|^2}{\inf_{|x| \geq R} V(x)} \leq \frac{\sup \|u_n\|^2}{\inf_{|x| \geq R} V(x)}.
\end{align*}
Taking $R > 0$ large enough we obtain a contradiction, since $V(x) \to \infty$ as $|x| \to \infty$. Hence $u_0 \neq 0$ and the proof is completed.
\qed
\end{proof}

\section{Singular potentials}\label{sec:5}

In this section we will provide existence and nonexistence results for the equation with a singular potential and we focus only on the fractional case, i.e. $0<\alpha<2$. We assume that the potential is of the form
$$
V(x) - \frac{\mu}{|x|^\alpha},
$$
where $V$ satisfies (V1) and (V2), and $\mu \in \R$. In what follows we will use the following quadratic form definition of the fractional Laplacian (\cite{Kwasnicki}), i.e.
$$
\langle (-\varDelta)^{\alpha / 2} u , \varphi \rangle = \frac{2^\alpha \Gamma \left( \frac{N+\alpha}{2} \right)}{2 \pi^{N/2} \left| \Gamma \left( - \frac{\alpha}{2} \right) \right|} \iint_{\R^N \times \R^N} \frac{(u(y)-u(x))(\varphi(y)-\varphi(x))}{|x-y|^{N+\alpha}} \, dx \, dy
$$
on $H^{\alpha / 2} (\R^N)$. In what follows we will denote
$$
c_{N,\alpha} := \frac{2^\alpha \Gamma \left( \frac{N+\alpha}{2} \right)}{2 \pi^{N/2} \left| \Gamma \left( - \alpha/2 \right) \right|} .
$$
On $H^{\alpha / 2} (\R^N)$ we define a norm
\begin{align*}
\|u\|^2 :&= \langle (-\varDelta)^{\alpha / 2} u , u \rangle + \int_{\R^N} V(x) u^2 \, dx \\ &= c_{N,\alpha} \iint_{\R^N \times \R^N} \frac{|u(x)-u(y)|^2}{|x-y|^{N+\alpha}} \, dx \, dy + \int_{\R^N} V(x) u^2 \, dx.
\end{align*}
Then the energy functional is given by
$$
\cJ(u) = \frac12 \|u\|^2 - \frac{\mu}{2} \int_{\R^N} \frac{u^2}{|x|^\alpha} \, dx - \int_{\R^N} F(x,u) - \frac{1}{q} K(x) |u|^q \, dx.
$$
The Nehari manifold in this setting is given in the usual way
$$
\cN = \{ u \in H^{\alpha / 2} (\R^N) \setminus \{0\} \ : \ \cJ'(u)(u) = 0 \}.
$$
Let us recall the fractional Hardy inequality, which is the main tool and allows us to deal with Hardy-type potentials.

\begin{lemma}[{\cite{FrankSeiringer}[Theorem 1.1]}] \label{Lem:HardyIneq}
There is $H_{N,\alpha} > 0$ such that for every $u \in H^{\alpha / 2} (\R^N)$ and $N > \alpha$ there holds
$$
\iint_{\R^N \times \R^N} \frac{|u(x)-u(y)|^2}{|x-y|^{N+\alpha}} \, dx \, dy \geq H_{N,\alpha} \int_{\R^N} \frac{|u(x)|^2}{|x|^{\alpha}} \, dx.
$$
\end{lemma}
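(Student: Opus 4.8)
The plan is to follow the \emph{ground state representation} (or ``ground state substitution'') method of Frank and Seiringer. By density it suffices to prove the inequality for nonnegative $u \in C_c^\infty(\R^N\setminus\{0\})$: the reduction to nonnegative $u$ is free since $|u(x)-u(y)| \ge \bigl||u(x)|-|u(y)|\bigr|$, and $C_c^\infty(\R^N\setminus\{0\})$ is dense in $H^{\alpha/2}(\R^N)$ when $N>\alpha$, so the general statement then follows by passing to the limit (using continuity of the Gagliardo seminorm and Fatou's lemma applied to $\int_{\R^N} |u|^2/|x|^\alpha\,dx$).

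First I would introduce the formal ground state $\omega(x):=|x|^{-(N-\alpha)/2}$, which is homogeneous of the critical degree and satisfies, as a principal value,
$$
c_{N,\alpha}\,\mathrm{P.V.}\!\int_{\R^N}\frac{\omega(x)-\omega(y)}{|x-y|^{N+\alpha}}\,dy \;=\; \Lambda_{N,\alpha}\,\frac{\omega(x)}{|x|^\alpha}, \qquad \Lambda_{N,\alpha}:=2^\alpha\,\frac{\Gamma\!\bigl(\tfrac{N+\alpha}{4}\bigr)^2}{\Gamma\!\bigl(\tfrac{N-\alpha}{4}\bigr)^2};
$$
equivalently, $(-\varDelta)^{\alpha/2}\omega = \Lambda_{N,\alpha}\,|x|^{-\alpha}\omega$. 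I expect this identity to be the main obstacle: it amounts to computing the fractional Laplacian of a pure power, which one does by scaling (the answer must be a constant times $|x|^{-(N+\alpha)/2}$) together with an explicit evaluation of the resulting Beta-type integral, and one must handle the singularity of $\omega$ at the origin and its slow decay at infinity with care when justifying the principal value.

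Next, writing $u=\omega v$ with $v\in C_c^\infty(\R^N\setminus\{0\})$, one uses the elementary algebraic identity
$$
|u(x)-u(y)|^2 - \omega(x)\omega(y)\,|v(x)-v(y)|^2 = \omega(x)v(x)^2\bigl(\omega(x)-\omega(y)\bigr) + \omega(y)v(y)^2\bigl(\omega(y)-\omega(x)\bigr).
$$
Integrating this against the symmetric kernel $|x-y|^{-N-\alpha}$, exploiting the symmetry in $x\leftrightarrow y$, and inserting the ground-state identity above yields
\begin{align*}
\iint_{\R^N\times\R^N}\frac{|u(x)-u(y)|^2}{|x-y|^{N+\alpha}}\,dx\,dy
&= \iint_{\R^N\times\R^N}\frac{\omega(x)\omega(y)\,|v(x)-v(y)|^2}{|x-y|^{N+\alpha}}\,dx\,dy \\
&\qquad + \frac{2\Lambda_{N,\alpha}}{c_{N,\alpha}}\int_{\R^N}\frac{v(x)^2\omega(x)^2}{|x|^\alpha}\,dx.
\end{align*}
Since $v^2\omega^2=u^2$ and the first term on the right-hand side is nonnegative, discarding it gives the asserted inequality with $H_{N,\alpha}=2\Lambda_{N,\alpha}/c_{N,\alpha}$; in fact this representation shows that $\Lambda_{N,\alpha}$ is the \emph{sharp} constant and that equality is approached (though not attained in $H^{\alpha/2}$) along the profile $\omega$.

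Finally, the density/approximation step sketched in the first paragraph promotes the inequality from $C_c^\infty(\R^N\setminus\{0\})$ to all of $H^{\alpha/2}(\R^N)$. An alternative route that sidesteps the ground-state substitution is to pass to Fourier variables: using $\widehat{|x|^{-\alpha}} = c'_{N,\alpha}\,|\xi|^{-(N-\alpha)}$ the inequality becomes a weighted convolution estimate that follows from the Stein--Weiss fractional integration inequality; this argument is shorter but yields a non-sharp constant and does not identify the extremal, which is why I would prefer the ground-state representation.
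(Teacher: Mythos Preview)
The paper does not prove this lemma at all: it is simply quoted from Frank--Seiringer \cite{FrankSeiringer} and then used as a black box (the explicit value of $H_{N,\alpha}$ is recorded immediately after the statement, again by citation). Your proposal, by contrast, reproduces the actual Frank--Seiringer argument --- the ground state substitution $u=\omega v$ with $\omega(x)=|x|^{-(N-\alpha)/2}$, the algebraic identity for $|u(x)-u(y)|^2$, and the eigenvalue relation $(-\varDelta)^{\alpha/2}\omega=\Lambda_{N,\alpha}|x|^{-\alpha}\omega$ --- so you are supplying a proof where the paper merely cites one. The outline is correct and is precisely the method of the referenced source; in particular your density reduction, the pointwise algebra, and the nonnegativity of the remainder term are all sound.

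One bookkeeping issue: in the paper's conventions the constant $c_{N,\alpha}$ already carries a factor $1/2$ (it normalises the \emph{quadratic form}, not the pointwise singular integral, which uses $C_{N,\alpha}=2c_{N,\alpha}$). Hence your displayed identity should read $2c_{N,\alpha}\,\mathrm{P.V.}\!\int\ldots=\Lambda_{N,\alpha}\omega/|x|^\alpha$, and after the symmetrisation the Hardy term comes out with coefficient $\Lambda_{N,\alpha}/c_{N,\alpha}$, not $2\Lambda_{N,\alpha}/c_{N,\alpha}$. This matches the paper's formula $\mu^*=H_{N,\alpha}c_{N,\alpha}=\Lambda_{N,\alpha}$. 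For the lemma as stated (mere existence of some $H_{N,\alpha}>0$) this factor is irrelevant, but since you go on to claim sharpness you should track it.
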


Moreover, in view of \cite{FrankSeiringer}, the sharp constant $H_{N,\alpha}$ can be estabilished and it is equal
$$
H_{N,\alpha} = 2 \pi^{N/2} \frac{\Ga \left( \frac{N+\alpha}{4} \right)^2  | \Ga ( - \alpha / 2)|}{\Ga \left( \frac{N-\alpha}{4} \right)^2 \Ga \left( \frac{N+\alpha}{2}  \right)}.
$$
Define 
$$
\mu^* := H_{N,\alpha} c_{N,\alpha} = 2^{\alpha} \left( \frac{\Ga \left( \frac{N+\alpha}{4} \right) }{\Ga \left( \frac{N-\alpha}{4} \right) } \right)^2.
$$
Note that in the local case ($\alpha = 2$) we obtain
$$
\mu^* = 4 \left( \frac{\Gamma \left(\frac{N}{4} - \frac{1}{2}+1\right)}{\Gamma \left(\frac{N}{4} - \frac{1}{2}\right)} \right)^2 = 4 \left( \frac{N}{4} - \frac{1}{2} \right)^2 = \frac{(N-2)^2}{4}.
$$
The same constant has been obtained in \cite{GuoMederski} and it is the sharp constant in the Hardy inequality in $H^1 (\R^N)$. 

Now we can state the main result of this section.

\begin{theorem}[{\cite[Theorem 1.1]{Bieganowski-hardy}}]\label{existence-hardy}
Assume that (V1), (V2), (K), (F1)--(F4) are satisfied, $V_\loc \equiv 0$ or $V_\loc(x) < 0$ for a.e. $x \in \R^N$, and $0 \leq \mu < \mu^*$. Then \eqref{eq:1.1} has a ground state, i.e. there is a nontrivial critical point $u$ of $\cJ$ such that $\cJ(u) = \inf_{\cN} \cJ$.
\end{theorem}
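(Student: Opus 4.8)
The strategy is to reproduce the scheme of the proof of Theorem~\ref{existence-per}, with the Hardy term $\tfrac{\mu}{2}\int_{\R^N}\tfrac{u^2}{|x|^\alpha}\,dx$ playing exactly the role of the localized perturbation $\tfrac12\int_{\R^N}V_{\loc}u^2\,dx$ there. The first point is that the variational framework is well posed: since $0\le\mu<\mu^*$, the fractional Hardy inequality (Lemma~\ref{Lem:HardyIneq}) gives $0\le\mu\int_{\R^N}\tfrac{u^2}{|x|^\alpha}\,dx\le\tfrac{\mu}{\mu^*}\,c_{N,\alpha}\iint_{\R^N\times\R^N}\tfrac{|u(x)-u(y)|^2}{|x-y|^{N+\alpha}}\,dx\,dy$, so that, using also (V1) and (V2) as in Section~\ref{sec:3}, the quadratic form $u\mapsto\|u\|^2-\mu\int_{\R^N}\tfrac{u^2}{|x|^\alpha}\,dx$ is positive definite and equivalent to the standard norm of $H^{\alpha/2}(\R^N)$. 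Writing $\langle u,v\rangle_\mu:=\langle u,v\rangle-\mu\int_{\R^N}\tfrac{uv}{|x|^\alpha}\,dx$, the functional takes the abstract form $\cJ(u)=\tfrac12\|u\|_\mu^2-\cI(u)$ with $\cI(u)=\int_{\R^N}F(x,u)-\tfrac1q K(x)|u|^q\,dx$; the conditions (F1)--(F4) and (K) then yield (J1)--(J4) just as in the periodic case, so Theorem~\ref{abstract} provides a bounded Palais--Smale sequence $\{u_n\}\subset\cN$ with $\cJ(u_n)\to c:=\inf_\cN\cJ>0$ and $\cJ'(u_n)\to0$.

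Passing to a subsequence, $u_n\weakto u_0$ in $H^{\alpha/2}(\R^N)$. Testing $\cJ'(u_n)$ against $\varphi\in C_c^\infty(\R^N)$ and using the local compactness of $H^{\alpha/2}(\R^N)\subset L^t_{\loc}(\R^N)$ together with the boundedness (hence weak continuity) of $u\mapsto u/|x|^{\alpha/2}$ from $H^{\alpha/2}(\R^N)$ into $L^2(\R^N)$ --- note $|x|^{-\alpha}$ is locally integrable because $N>\alpha$ --- one obtains $\cJ'(u_0)=0$. It remains to exclude $u_0=0$, and for this I would invoke the Palais--Smale decomposition for the singular problem (the counterpart of Theorem~\ref{ThDecomposition-per}): because both $V_{\loc}\in L^s(\R^N)$ and the weight $\mu/|x|^\alpha$ vanish at spatial infinity, every profile $w^k$ splitting off from $\{u_n\}$ solves the purely periodic equation, i.e. $\cJ_{\per}'(w^k)=0$, and $\cJ(u_n)\to\cJ(u_0)+\sum_{k=1}^\ell\cJ_{\per}(w^k)$, where $\cJ_{\per}$ and $c_{\per}:=\inf_{\cN_{\per}}\cJ_{\per}$ are as in Section~\ref{sec:3}.

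The conclusion then follows from the same case analysis as in Theorem~\ref{existence-per}. If $V_{\loc}\equiv0$ and $\mu=0$ then $\cJ=\cJ_{\per}$ and $c=c_{\per}$: when $u_0\ne0$ we have $u_0\in\cN$, so $\cJ(u_0)\ge c$ and the identity $c=\cJ(u_0)+\sum_{k=1}^\ell\cJ_{\per}(w^k)\ge(\ell+1)c$ forces $\ell=0$ and $\cJ(u_0)=c$; when $u_0=0$ the identity $c=\sum_{k=1}^\ell\cJ_{\per}(w^k)\ge\ell c$ forces $\ell=1$, and $w^1$ is a ground state. In every remaining case $V_{\loc}\le0$, $\mu\ge0$ with at least one inequality strict, so comparing $\cJ$ and $\cJ_{\per}$ along the Nehari fibre of a minimizer $u\in\cN_{\per}$ (which exists by Theorem~\ref{existence-per} applied to the periodic problem) and using Remark~\ref{rem-abstr}(b) gives $c\le\cJ(tu)<\cJ_{\per}(tu)\le\cJ_{\per}(u)=c_{\per}$, where $t>0$ is chosen so that $tu\in\cN$; then $u_0=0$ is impossible ($\ell=0$ would give $c=0$, while $\ell\ge1$ would give $c\ge\ell c_{\per}>c$), and with $u_0\ne0$ the identity $c=\cJ(u_0)+\sum_{k=1}^\ell\cJ_{\per}(w^k)\ge c+\ell c_{\per}$ forces $\ell=0$ and $\cJ(u_0)=c$, so $u_0$ is the desired ground state.

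The main obstacle is the Palais--Smale decomposition itself: one must show that the bubbles escaping to infinity solve the homogeneous periodic problem rather than the perturbed one, which requires a quantitative estimate on the tails of both $V_{\loc}$ (through its $L^{N/\alpha}$-type integrability in (V1)) and the Hardy weight. The other delicate point, settled once and for all by the strict bound $\mu<\mu^*$, is that $\cJ$ stays coercive on all of $H^{\alpha/2}(\R^N)$ and that no mass can concentrate at the origin; without this neither the variational setup nor the local compactness step would go through.
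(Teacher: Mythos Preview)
Your variational setup, the use of the equivalent norm $\|\cdot\|_\mu$ via Lemma~\ref{norm-eqv}, the verification of (J1)--(J4), and the application of Theorem~\ref{abstract} to produce a bounded Palais--Smale sequence all match the paper exactly. Where you diverge is in the choice of limiting problem for the decomposition: you treat the Hardy weight $\mu|x|^{-\alpha}$ on the same footing as $V_{\loc}$---both vanish at infinity---so that your profiles satisfy $\cJ_{\per}'(w^k)=0$ and the energy splits cleanly as $\cJ(u_0)+\sum_k\cJ_{\per}(w^k)$. The paper instead invokes Theorem~\ref{ThDecomposition}, in which only $V_{\loc}$ is stripped off: the profiles are critical points of $\cJ_\infty$ (which still carries the Hardy term), and the energy identity picks up the extra nonnegative remainder $\tfrac{\mu}{2}\sum_k\int_{\R^N}|w^k|^2/|x|^\alpha\,dx$. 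Consequently the paper compares $c$ with $c_\infty$ (equal when $V_{\loc}\equiv 0$, so the positive remainder is what kills the bubbles), whereas you compare $c$ with $c_{\per}$ and need the strict inequality $c<c_{\per}$ whenever $\mu>0$ or $V_{\loc}<0$, which you correctly extract from the Nehari-fibre comparison. Note that the two energy identities are in fact the same statement, since $\cJ_{\per}(w)=\cJ_\infty(w)+\tfrac{\mu}{2}\int_{\R^N}w^2/|x|^\alpha\,dx$; the genuine difference is in part~(c), i.e.\ which Nehari manifold the profiles sit on. Your route is more symmetric with Theorem~\ref{existence-per} and avoids carrying the remainder term, but it relies on a decomposition that is not the one the paper states and proves, so the tail estimates you flag (via Lemma~\ref{hardyLemma}) would need to be turned into a full replacement for Theorem~\ref{ThDecomposition}.
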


The following fact is very useful to deal with the Hardy-type term and plays a very important role in the proof of the decomposition of minimizing sequences (see Section \ref{sec:6}: Theorem \ref{ThDecomposition}).

\begin{lemma}\label{hardyLemma}
If $|x_n|\to\infty$, then for any function $u \in H^{\alpha / 2}(\R^N)$,
$$
\int_{\R^N} \frac{1}{|x|^\alpha} |u(\cdot - x_n)|^2 \, dx \to 0.
$$
\end{lemma}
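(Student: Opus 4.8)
The plan is to combine a translation, the density of compactly supported smooth functions in $H^{\alpha/2}(\R^N)$, and the fractional Hardy inequality (Lemma \ref{Lem:HardyIneq}). First I would perform the change of variables $y = x - x_n$ to rewrite
$$
\int_{\R^N} \frac{1}{|x|^\alpha}|u(\cdot - x_n)|^2 \, dx = \int_{\R^N} \frac{|u(y)|^2}{|y + x_n|^\alpha} \, dy,
$$
or, equivalently, work directly with $v_n := u(\cdot - x_n)$, noting that translation leaves the $H^{\alpha/2}$-norm (hence the Gagliardo seminorm and the $L^2$-norm) unchanged. The point is to split off the obstruction coming from the origin.

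Next, fix $\varepsilon > 0$ and decompose $u = \varphi + \psi$ with $\varphi \in C_c^\infty(\R^N)$ and $\|\psi\|_{H^{\alpha/2}(\R^N)} < \varepsilon$ (possible by density). Using $|v_n|^2 \le 2|\varphi(\cdot-x_n)|^2 + 2|\psi(\cdot-x_n)|^2$, I would estimate the two contributions separately. For the $\psi$-part, the fractional Hardy inequality gives, uniformly in $n$,
$$
\int_{\R^N} \frac{|\psi(x - x_n)|^2}{|x|^\alpha}\, dx \le \frac{1}{H_{N,\alpha}} \iint_{\R^N\times\R^N} \frac{|\psi(x-x_n)-\psi(y-x_n)|^2}{|x-y|^{N+\alpha}}\,dx\,dy \le \frac{C}{H_{N,\alpha}}\,\varepsilon^2,
$$
where the last inequality uses translation invariance of the seminorm together with its continuous control by $\|\cdot\|_{H^{\alpha/2}(\R^N)}$. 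For the $\varphi$-part, if $\supp \varphi \subset B(0,R)$ then $\varphi(\cdot - x_n)$ is supported in $B(x_n, R)$, on which $|x| \ge |x_n| - R > |x_n|/2$ as soon as $|x_n| > 2R$; hence
$$
\int_{\R^N} \frac{|\varphi(x - x_n)|^2}{|x|^\alpha}\, dx \le \frac{2^\alpha}{|x_n|^\alpha}\,|\varphi|_2^2 \to 0 \quad \text{as } n \to \infty,
$$
since $|x_n|\to\infty$.

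Combining the two bounds yields $\limsup_{n\to\infty}\int_{\R^N}|x|^{-\alpha}|u(\cdot-x_n)|^2\,dx \le 2C\varepsilon^2/H_{N,\alpha}$, and letting $\varepsilon\to 0^+$ finishes the proof. The one genuinely delicate point is the non-integrable singularity of $|x|^{-\alpha}$ at the origin, which rules out concluding by a naive H\"older estimate (one has $|x|^{-\alpha}\notin L^{N/\alpha}_{\loc}$, precisely the borderline exponent); the fractional Hardy inequality is exactly the tool that controls this term, and crucially does so uniformly in the translation parameter $x_n$ once $u$ has been split off a smooth compactly supported piece whose translate escapes to infinity, away from the singularity.
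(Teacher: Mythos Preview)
Your proof is correct and follows essentially the same strategy as the paper: approximate $u$ by a compactly supported smooth function, use that its translate is supported far from the origin so the singular weight is harmless there, and control the remainder uniformly in $n$ via the fractional Hardy inequality and translation invariance of the Gagliardo seminorm. The only cosmetic difference is that the paper works with a sequence $\varphi_m\to u$ rather than fixing $\varepsilon$ and a single $\varphi$, but the underlying argument is the same.
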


\begin{proof}
Let $\varphi_m \in C_0^\infty (\R^N)$ and $\varphi_m \to u$ in $H^{\alpha / 2} (\R^N)$ as $m \to \infty$. Take $R_m > 0$ large enough that
$$
\supp \varphi_m \subset B(0, R_m).
$$
Obviously, for any $m$ there is $n(m)$ such that $|x_{n(m)}|-R_m \geq m$ and $n(m)$ is increasing. We get
\begin{align*}
\int_{\R^N} \frac{1}{|x|^\alpha} | \varphi_m (\cdot - x_n)|^2 \, dx &= \int_{\R^N} \frac{1}{|x+x_n|^\alpha} |\varphi_m|^2 \, dx = \int_{B(0,R_m)} \frac{1}{|x+x_n|^\alpha} |\varphi_m|^2 \, dx \\
&\leq \frac{1}{(|x_n|-R_m)^\alpha} \int_{B(0,R_m)} |\varphi_m|^2 \, dx \leq \frac{1}{m^\alpha} \int_{\R^N} |\varphi_m|^2 \, dx \to 0
\end{align*}
We have
\begin{align*}
&\quad \int_{\R^N} \frac{1}{|x|^\alpha} | u(\cdot - x_n) |^2 \, dx \\
&\leq \int_{\R^N} \frac{1}{|x|^\alpha} | u(\cdot - x_n) - \varphi_m (\cdot - x_n) |^2 \, dx + \int_{\R^N} \frac{1}{|x|^\alpha} | \varphi_m (\cdot - x_n)|^2 \, dx \\ &= \int_{\R^N} \frac{1}{|x|^\alpha} | u(\cdot - x_n) - \varphi_m (\cdot - x_n) |^2 \, dx + o(1).
\end{align*}
In view of the fractional Hardy inequality we obtain
\begin{align*}
&\quad \int_{\R^N} \frac{| u(\cdot - x_n) - \varphi_m (\cdot - x_n)|^2}{|x|^\alpha}  \, dx \\
&\leq \frac{1}{H_{N,\alpha}} \iint_{\R^N \times \R^N} \frac{|u(x)- \varphi_m (x)- u(y) + \varphi_m(y)|^2}{|x-y|^{N+\alpha}} \, dx \, dy \to 0,
\end{align*}
since $\varphi_m \to u$ in $H^{\alpha / 2} (\R^N)$.
\qed
\end{proof}

\begin{lemma}\label{norm-eqv}
Let $0 \leq \mu < \mu^*$. There exists $0 < D_{N,\alpha,\mu} \leq 1$ such that for any function $u \in H^{\alpha / 2} (\R^N)$ 
$$
D_{N,\alpha,\mu} \|u\|^2 \leq \|u\|^2 - \mu \int_{\R^N} \frac{u^2}{|x|^\alpha} \, dx \leq \|u\|^2.
$$
\end{lemma}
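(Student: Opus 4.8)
The plan is to split $\|u\|^2$ into its Gagliardo seminorm part and its potential part, and to absorb the Hardy term into the seminorm part by means of Lemma \ref{Lem:HardyIneq} together with the identity $\mu^* = H_{N,\alpha}\, c_{N,\alpha}$. Throughout I write $[u]^2 := \iint_{\R^N \times \R^N} \frac{|u(x)-u(y)|^2}{|x-y|^{N+\alpha}} \, dx \, dy$, so that $\|u\|^2 = c_{N,\alpha}[u]^2 + \int_{\R^N} V(x) u^2 \, dx$. Note first that $\int_{\R^N} u^2/|x|^\alpha \, dx$ is finite for every $u \in H^{\alpha/2}(\R^N)$ by Lemma \ref{Lem:HardyIneq}, and it is nonnegative; since $\mu \geq 0$ this already yields the upper bound $\|u\|^2 - \mu \int_{\R^N} u^2/|x|^\alpha \, dx \leq \|u\|^2$.

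For the lower bound I would use Lemma \ref{Lem:HardyIneq} in the form $\int_{\R^N} u^2/|x|^\alpha \, dx \leq H_{N,\alpha}^{-1}[u]^2$, so that
$$
\mu \int_{\R^N} \frac{u^2}{|x|^\alpha} \, dx \leq \frac{\mu}{\mu^*}\, c_{N,\alpha}[u]^2.
$$
Setting $D_{N,\alpha,\mu} := 1 - \mu/\mu^*$, which lies in $(0,1]$ precisely because $0 \leq \mu < \mu^*$, and using that $V > 0$ a.e. by (V2) in the fractional regime (hence $\int_{\R^N} V(x)u^2\,dx \geq 0$), I obtain
$$
\|u\|^2 - \mu \int_{\R^N} \frac{u^2}{|x|^\alpha} \, dx \geq D_{N,\alpha,\mu}\, c_{N,\alpha}[u]^2 + \int_{\R^N} V(x) u^2 \, dx \geq D_{N,\alpha,\mu}\Big( c_{N,\alpha}[u]^2 + \int_{\R^N} V(x) u^2 \, dx \Big) = D_{N,\alpha,\mu}\|u\|^2,
$$
where the last inequality uses $D_{N,\alpha,\mu} \leq 1$ together with the nonnegativity of the potential term.

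There is no serious obstacle here: the statement is essentially a one-line consequence of the fractional Hardy inequality after the natural splitting of the norm. The only points needing a word of care are the finiteness of the Hardy integral (which is exactly what Lemma \ref{Lem:HardyIneq} provides) and the sign of $\int_{\R^N} V(x)u^2\,dx$, which is where the standing hypotheses (V1)--(V2) on $V$ enter; the resulting constant $D_{N,\alpha,\mu} = 1 - \mu/\mu^*$ is explicit, and the strict inequality $\mu < \mu^*$ is precisely what keeps it positive.
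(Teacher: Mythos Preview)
Your proof is correct and follows essentially the same approach as the paper: split $\|u\|^2$ into the Gagliardo seminorm and the potential part, absorb the Hardy term into the former via Lemma \ref{Lem:HardyIneq}, and use the positivity of $\int V u^2$. Your argument is in fact slightly more streamlined than the paper's (which splits $\|u\|_\mu^2$ into two halves before estimating) and yields the sharper constant $D_{N,\alpha,\mu} = 1 - \mu/\mu^*$ in place of the paper's $\tfrac12(1-\mu/\mu^*)$.
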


\begin{proof}
We have
$$
\| u \|^2 = c_{N,\alpha} \iint_{\R^N \times \R^N} \frac{|u(x)-u(y)|^2}{|x-y|^{N+\alpha}} \, dx \, dy + \int_{\R^N} V(x) u^2 \, dx.
$$
Put
$$
\| u \|^2_\mu := \| u \|^2 - \mu \int_{\R^N} \frac{u^2}{|x|^\alpha} \, dx.
$$
In view of Lemma \ref{Lem:HardyIneq}
$$
c_{N,\alpha} \iint_{\R^N \times \R^N} \frac{|u(x)-u(y)|^2}{|x-y|^{N+\alpha}} \, dx \, dy \geq c_{N,\alpha} H_{N,\alpha} \int_{\R^N} \frac{|u(x)|^2}{|x|^\alpha} \, dx.
$$
 Then $\mu < \mu^*$ means that
$$
1 \geq 1 - \frac{\mu}{c_{N,\alpha} H_{N,\alpha}} > 0.
$$
Then
\begin{align} \label{Eq:2.1}
& c_{N,\alpha} \iint_{\R^N \times \R^N} \frac{|u(x)-u(y)|^2}{|x-y|^{N+\alpha}} \, dx \, dy - \mu \int_{\R^N} \frac{|u(x)|^2}{|x|^\alpha} \, dx \\ \geq& \left(1 - \frac{ \mu}{c_{N,\alpha} H_{N,\alpha} } \right) c_{N,\alpha} \iint_{\R^N \times \R^N}  \frac{|u(x)-u(y)|^2}{|x-y|^{N+\alpha}} \, dx \, dy \geq 0.\nonumber
\end{align}
On the other hand
\begin{equation} \label{Eq:2.2}
\| u\|_\mu^2 \geq \left( 1 - \frac{ \mu}{c_{N,\alpha} H_{N,\alpha} } \right) c_{N,\alpha} \iint_{\R^N \times \R^N}  \frac{|u(x)-u(y)|^2}{|x-y|^{N+\alpha}} \, dx \, dy.
\end{equation}
From (\ref{Eq:2.1}) we have
\begin{equation} \label{Eq:2.3}
\| u \|_\mu^2 \geq \int_{\R^N} V(x) u^2 \, dx.
\end{equation}
Combining (\ref{Eq:2.2}) and (\ref{Eq:2.3}) we obtain
\begin{align*}
\| u \|_\mu^2 &= \frac12 \|u\|_\mu^2 + \frac12 \|u\|_\mu^2 \\ &\geq \frac12 \left( 1 - \frac{ \mu}{c_{N,\alpha} H_{N,\alpha} } \right) c_{N,\alpha} \iint_{\R^N \times \R^N} \frac{|u(x)-u(y)|^2}{|x-y|^{N+\alpha}} \, dx \, dy + \frac12 \int_{\R^N} V(x) u^2 \, dx \\ &\geq \frac12 \left( 1 - \frac{\mu}{ c_{N,\alpha}H_{N,\alpha} } \right) \|u\|^2
\end{align*}
and the conclusion follows. \qed
\end{proof}

\begin{remark}
$\| \cdot \|_\mu$ is an equivalent norm on $H^{\alpha / 2} (\R^N)$ for $0 \leq \mu < \mu^*$. Indeed -- observe that the bilinear form
\begin{align*}
Q_\mu (u,v) &= c_{N,\alpha} \iint_{\R^N \times \R^N} \frac{|(u(x)-u(y))(v(x)-v(y))}{|x-y|^{N+\alpha}} \, dx \, dy + \int_{\R^N} V(x)uv \, dx \\ &\quad - \frac{\mu}{2} \int_{\R^N} \frac{uv}{|x|^\alpha} \, dx
\end{align*}
is positive-definite and symmetric. Hence it induces a norm $\|u\|_\mu = \sqrt{Q_\mu(u,u)}$ and in view of Lemma \ref{norm-eqv} it is equivalent to $\| \cdot \|$.
\end{remark}

\begin{proof}[Theorem \ref{existence-hardy}]
Rewrite the functional $\cJ$ in the form
$$
\cJ (u) = \frac{1}{2} \|u\|_\mu^2 - \cI (u),
$$
where $\cI (u) :=  \int_{\R^N} F(x,u) + \frac{1}{q} K(x) |u|^q \, dx$. We can easily check that (J1)--(J4) in Theorem \ref{abstract} are satisfied on the space $\left( H^{\alpha / 2} (\R^N), \| \cdot \|_\mu \right)$. Hence there is a bounded minimizing sequence $ \{ u_n \} \subset \cN$ such that
$$
\cJ'(u_n) \to 0, \quad \cJ(u_n) \to c,
$$
where
$$
c = \inf_{\cN} \cJ > 0.
$$
Denote
$$
\cJ_\infty (u) := \cJ(u) - \frac12 \int_{\R^N} V_\loc (x) u^2 \, dx
$$
and let $\cN_\infty$ be the corresponding Nehari manifold. Suppose that $V_{\loc} \equiv 0$. Then $\cJ = \cJ_{\infty}$ and in view of Theorem \ref{ThDecomposition} we have
$$
c +o(1) = \cJ(u_n) \to \cJ (u_0) + \sum_{k=1}^\ell \cJ (w^k) + \frac{\mu}{2} \sum_{k=1}^\ell \int_{\R^N} \frac{|w^k |^2}{|x|^\alpha} \, dx \geq \cJ(u_0) + \ell c.
$$
If $u_0 \neq 0$ we obtain $c \geq (\ell + 1 ) c$ and $\ell = 0$, thus $u_0$ is a ground state solution. If $u_0 = 0$ we obtain $\cJ(u_0) = \cJ(0)=0$ and $c \geq \ell c$. Since $c > 0$, we have $\ell = 1$ and $w^k \neq 0$ is a ground state.

\noindent Suppose now that $V_{\loc} < 0$. Denote $c_{\infty} = \inf_{\cN_{\infty}} \cJ_{\infty} > 0$. As in \cite{BieganowskiMederski} we can show that $c_{\infty} > c$. Indeed, take a critical point $u_\infty \neq 0$ of $\cJ_\infty$ such that $\cJ_\infty (u_\infty) = c_\infty$. Let $t > 0$ be such that $tu_\infty \in \cN$. While $V(x) < V_{\loc} (x)$, we obtain
$$
c_\infty = \cJ_\infty (u_\infty) \geq \cJ_\infty(t u_\infty) > \cJ (t u_\infty) \geq c > 0.
$$
Then
\begin{align*}
c + o(1) = \cJ(u_n) &\to \cJ(u_0) + \sum_{k=1}^\ell \cJ_{\infty} (w^k) + \frac{\mu}{2} \sum_{k=1}^\ell \int_{\R^N} \frac{|w^k |^2}{|x|^\alpha} \, dx \\ &\geq \cJ(u_0) + \ell c_{\infty}.
\end{align*}
Since $c_{\infty} > c$, we have $\ell = 0$ and $u_0 \neq 0$ is a ground state solution.
\qed
\end{proof}

We have also the following nonexistence fact.

\begin{theorem}[{\cite[Theorem 1.2]{Bieganowski-hardy}}]\label{nonex-hardy}
Suppose that (V1), (V2), (K), (F1)--(F4) are satisfied, $\mu < 0$ and
$$
V_\loc (x) > \frac{\mu}{|x|^\alpha} \ \mathrm{for} \ \mathrm{a.e.} \ x \in \R^N \setminus \{0\},
$$
in particular $V_\loc > 0$ can be considered. Then \eqref{eq:1.1} has no ground state solutions.
\end{theorem}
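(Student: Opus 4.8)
The plan is to argue by contradiction, following the scheme of Theorem~\ref{nonexistence-per} but folding the Hardy term into the local perturbation so that it can be annihilated at infinity by Lemma~\ref{hardyLemma}. Let $\cJ_{\per}$ denote the energy functional of the purely periodic problem $(-\varDelta)^{\alpha/2}u+V_{\per}(x)u=f(x,u)-K(x)|u|^{q-2}u$, i.e.
$$
\cJ_{\per}(u):=\cJ(u)-\frac12\int_{\R^N}V_{\loc}(x)u^2\,dx+\frac{\mu}{2}\int_{\R^N}\frac{u^2}{|x|^\alpha}\,dx,
$$
with Nehari manifold $\cN_{\per}$ and ground level $c_{\per}:=\inf_{\cN_{\per}}\cJ_{\per}$. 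By $\Z^N$-periodicity of $f$, $K$, $V_{\per}$, the analogues of Lemmas~\ref{J-inv} and~\ref{Lem:NehariInvariance} hold for $\cJ_{\per}$, $\cN_{\per}$, and $\cJ_{\per}$ obeys (J1)--(J4) as in Theorem~\ref{existence-per}; in particular $c_{\per}>0$, $\cJ_{\per}$ is coercive on $\cN_{\per}$, and for each $v\neq0$ the map $t\mapsto\cJ_{\per}(tv)$ is maximized at the unique $t>0$ with $tv\in\cN_{\per}$. Finally,
$$
\cJ(u)=\cJ_{\per}(u)+\frac12\int_{\R^N}\Bigl(V_{\loc}(x)-\frac{\mu}{|x|^\alpha}\Bigr)u^2\,dx,
$$
and the hypothesis $V_{\loc}(x)>\mu/|x|^\alpha$ a.e.\ makes the integrand strictly positive a.e.

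Suppose a ground state $u_0\in\cN$ of $\cJ$ exists, $\cJ(u_0)=c:=\inf_{\cN}\cJ>0$, and take $t_{\per}>0$ with $t_{\per}u_0\in\cN_{\per}$. Since $u_0\not\equiv0$ the perturbation integral is strictly positive, so
$$
c_{\per}\le\cJ_{\per}(t_{\per}u_0)<\cJ(t_{\per}u_0)\le\cJ(u_0)=c,
$$
the last step because $t=1$ maximizes $t\mapsto\cJ(tu_0)$ (Remark~\ref{rem-abstr}(b), as $u_0\in\cN$); hence $c_{\per}<c$. On the other hand, fix any $u\in\cN_{\per}$, and for $y\in\Z^N$ let $\tau_yu=u(\cdot-y)$ and $t_y>0$ be such that $t_y\tau_yu\in\cN$ (Theorem~\ref{abstract}). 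Using $\Z^N$-invariance of $\cJ_{\per},\cN_{\per}$ and that $t=1$ maximizes $t\mapsto\cJ_{\per}(t\tau_yu)$,
$$
\cJ_{\per}(u)=\cJ_{\per}(\tau_yu)\ge\cJ_{\per}(t_y\tau_yu)=\cJ(t_y\tau_yu)-\frac12\int_{\R^N}\Bigl(V_{\loc}(x)-\frac{\mu}{|x|^\alpha}\Bigr)(t_y\tau_yu)^2\,dx\ge c-\frac12\int_{\R^N}\Bigl(V_{\loc}(x)-\frac{\mu}{|x|^\alpha}\Bigr)(t_y\tau_yu)^2\,dx.
$$

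It then remains to send this perturbation integral to $0$ as $|y|\to\infty$. One checks, as in Theorem~\ref{nonexistence-per}, that $\{t_y\}$ is bounded: periodicity of $f,K$ gives $\cJ(t\tau_yu)=\tfrac{t^2}{2}\|\tau_yu\|_\mu^2-\cI(tu)$ with $\cI$ translation invariant and $\|\tau_yu\|_\mu$ uniformly bounded above and below, so by (F3) $\cJ(t\tau_yu)<0$ past a $y$-independent threshold while $\cJ(t_y\tau_yu)\ge c>0$. After the change of variables $x\mapsto x+y$, the $V_{\loc}$-contribution $t_y^2\int_{\R^N}V_{\loc}(x+y)u(x)^2\,dx\to0$ by (V1) (Hölder against $V_{\loc}\in L^s(\R^N)$, $s\ge N/\alpha$, on a large ball $B_R$, plus a small $L^\infty$ tail $|V_{\loc}|_\infty\int_{|x|>R}u^2$), and, since $-\mu>0$, the Hardy contribution $-\mu\,t_y^2\int_{\R^N}|u(\cdot-y)|^2|x|^{-\alpha}\,dx\to0$ by Lemma~\ref{hardyLemma}. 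Therefore $\cJ_{\per}(u)\ge c$ for all $u\in\cN_{\per}$, whence $c_{\per}\ge c$, contradicting $c_{\per}<c$. This proves that \eqref{eq:1.1} has no ground state.

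The single genuinely new ingredient over Theorem~\ref{nonexistence-per} is the vanishing of the Hardy energy along translations, supplied by Lemma~\ref{hardyLemma}, while the sign condition on $V_{\loc}$ is exactly what keeps $\tfrac12(V_{\loc}-\mu|\cdot|^{-\alpha})u^2$ of a definite (positive) sign. The two technical points that deserve care are the uniform-in-$y$ boundedness of $\{t_y\}$ and the admissibility of the limiting periodic problem — equivalence of $\|\cdot\|_{\per}$ with the standard norm and validity of (J1)--(J4) for $\cJ_{\per}$ — the latter being why, as in Theorem~\ref{nonexistence-per}, the periodic part of the potential must satisfy the spectral/essential-infimum bound.
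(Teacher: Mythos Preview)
Your argument is correct and follows the same scheme as the paper's proof: introduce the purely periodic functional $\cJ_{\per}$, obtain the strict inequality $c_{\per}<c$ from the positivity of $V_{\loc}-\mu|x|^{-\alpha}$, and then derive the reverse inequality $c_{\per}\ge c$ by translating an arbitrary $u\in\cN_{\per}$, using Lemma~\ref{hardyLemma} for the Hardy term and (V1) for the $V_{\loc}$ term. The only cosmetic difference is in the boundedness of $\{t_y\}$: the paper divides the Nehari identity by $t_y^{q-2}$ and invokes (F3), whereas you compare $\cJ(t_y\tau_yu)\ge c$ with the uniform-in-$y$ divergence $\cJ(t\tau_yu)\to-\infty$; both are equivalent routes to the same conclusion.
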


\begin{proof}[Theorem \ref{nonex-hardy}]
Suppose that $u \in \cN$ is a ground state solution of \eqref{eq:1.1}. Denote by $\cJ_{\per}$ the energy functional with $\mu = 0$ and $V_{\loc} \equiv 0$, and let $\cN_{\per}$ be the corresponding Nehari manifold. Let $t > 0$ be such that $tu \in \cN_0$. Then
\begin{align*}
c_{\per} := \inf_{\cN_{\per}} \cJ_{\per} \leq \cJ_{\per} (t u) &= \cJ (t u) - \frac{1}{2} \int_{\R^N} V_{\loc}(x) |tu|^2 \, dx + \frac{\mu}{2} \int_{\R^N} \frac{|tu|^2}{|x|^\alpha} \, dx \\ &< \cJ(tu) \leq \cJ(u) =:c.
\end{align*}
Fix $z \in \mathbb{Z}^N$ and $u_{\per} \in \cN_{\per}$. Then there is $t(z) > 0$ such that $t(z) \tau_z u_{\per} \in \cN$. Observe that
\begin{align*}
&\quad \frac{1}{|t(z)|^{q-2}} \left( \|u_0\|^2 - \mu \int_{\R^N} \frac{| \tau_z u_0|^2}{|x|^\alpha} \, dx \right) \\ &= \frac{1}{|t(z)|^{q}} \int_{\R^N} f(x, t(z)u_0) t(z) u_0 \, dx - \frac{1}{|t(z)|^{q}} \int_{\R^N} K(x) |t(z)|^q |u_0|^q \, dx \\
&\geq \frac{1}{|t(z)|^q} \int_{\R^N} q F(x, t(z)u_0) \, dx -  \int_{\R^N} K(x) |u_0|^q \, dx \\
&= q \int_{\R^N} \frac{ F(x, t(z)u_0)}{|t(z)|^q} \, dx -  \int_{\R^N} K(x) |u_0|^q \, dx.
\end{align*}
The right hand side tends to $\infty$ as $t(z) \to \infty$, while the left hand side stays bounded. Hence $\{ t(z) \} \subset \R$ is bounded if $|z| \to \infty$. Hence, take any sequence $\{ z_n \} \subset \mathbb{Z}^N$ such that $|z_n| \to \infty$. We may assume that $t(z_n) \to t_0$ as $n \to \infty$ and $t_0 \geq 0$. Observe that, in view of Lemma \ref{hardyLemma},
\begin{align*}
&\quad \cJ_{\per} (u_{\per}) = \cJ_{\per} (\tau_z u_{\per} ) \geq \cJ_{\per} (t(z) \tau_z u_{\per}) \\
&= \cJ (t(z) \tau_z u_{\per}) - \frac{|t(z)|^2}{2} \int_{\R^N} V_{\loc} (x) |\tau_z u_{\per}|^2 \, dx + \frac{\mu |t(z)|^2}{2} \int_{\R^N} \frac{|\tau_z u_{\per}|^2}{|x|^\alpha} \, dx \\
&\geq c - \frac{|t(z)|^2}{2} \int_{\R^N} V_{\loc} (x+z) |u_{\per}|^2 \, dx + \frac{\mu |t(z)|^2}{2} \int_{\R^N} \frac{|\tau_z u_{\per}|^2}{|x|^\alpha} \, dx \\
 &\geq c + o(1).
\end{align*}
Taking infimum over $u_{\per} \in \cN_{\per}$ we obtain $c_{\per} < c \leq c_{\per}$ - a contradiction.
\qed
\end{proof}

\section{Profile decomposition of bounded minimizing sequences}
\label{sec:6}

In this section we present three decomposition results, in the spirit of \cite{JeanjeanTanaka}. While proofs are technical, we referee the reader to cited papers. In both subsections we consider the following assumptions on function $g : \R^N \times \R \rightarrow \R$:
\begin{enumerate}
\item[(G1)] $g(\cdot, u)$ is measurable and $\mathbb{Z}^N$-periodic in $x \in \R^N$, $g(x,\cdot)$ is continuous in $u \in \R$ for a.e. $x \in \R^N$;
\item[(G2)] $g(x,u) = o(u)$ as $|u| \to 0^+$ uniformly in $x \in \R^N$;
\item[(G3)] there exists $2 < r < 2_\alpha^*$ such that $\lim_{|u| \to \infty} g(x,u)/|u|^{r-1} = 0$ uniformly in $x \in \R^N$;
\item[(G4)] for each $a<b$ there is a constant $c>0$ such that  $|g(x,u)|\leq c$ for a.e. $x\in\R^N$ and $a\leq u\leq b$.
\end{enumerate} 
Moreover we denote $G(x,u) = \int_0^u g(x,s) \, ds$.

\subsection{The decomposition with bounded potentials}

We consider the functional $\cJ : H^{\alpha / 2} (\R^N) \rightarrow \R$ of the form
$$
\cJ (u) = \frac{1}{2} \|u\|^2 - \int_{\R^N} G(x,u) \, dx,
$$
where the norm is given by
$$
\|u\|^2 = \left\{ \begin{array}{ll}
\int_{\R^N} |\xi|^\alpha |\hat{u}(\xi)|^2 \, d\xi + \int_{\R^N} V(x) u^2 \, dx, & \quad 0 < \alpha < 2, \\
\int_{\R^N} |\nabla u|^2 + V(x) u^2 \, dx, & \quad \alpha = 2.
\end{array} \right.
$$
Put
$$
\cJ_{\per} (u) = \cJ (u) - \frac12 \int_{\R^N} V_{\loc} (x) u^2 \, dx
$$

\begin{remark}
Observe that $\cJ_\per$ is $\Z^N$-invariant, i.e.
$$
\cJ_\per( \tau_k u) = \cJ_\per (u)
$$
for any $u \in H^{\alpha / 2} (\R^N)$ and $k \in \Z^N$.
\end{remark}

\begin{theorem}[{$\alpha = 2$: \cite[Theorem 4.1]{BieganowskiMederski}, $0<\alpha<2$: \cite[Theorem 3.1]{Bieganowski}}]\label{ThDecomposition-per}
Suppose that ($G1$)--($G4$) and ($V_\alpha 1$) hold. Let $\{ u_n \}$ be a bounded Palais-Smale sequence for $\cJ$. Then passing to a subsequence of $\{ u_n \}$, there exist an integer $\ell > 0$ and sequences $\{y_n^k\} \subset \mathbb{Z}^N$, $w^k \in H^{\alpha / 2} (\R^N)$, $k = 1, \ldots, \ell$ such that:
\begin{enumerate}
\item[(a)] $u_n \rightharpoonup u_0$ and $\cJ' (u_0) = 0$;
\item[(b)] $|y_n^k| \to \infty$ and $|y_n^k - y_n^{k'}| \to \infty$ for $k \neq k'$;
\item[(c)] $w^k \neq 0$ and $\cJ_{per}'(w^k) = 0$ for each $1 \leq k \leq \ell$;
\item[(d)] $u_n - u_0 - \sum_{k=1}^\ell w^k (\cdot - y_n^k) \to 0$ in $H^{\alpha / 2} (\R^N)$ as $n \to \infty$;
\item[(e)] $\cJ (u_n) \to \cJ(u_0) + \sum_{k=1}^\ell \cJ_{per} (w^k)$.
\end{enumerate}
\end{theorem}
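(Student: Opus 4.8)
The plan is to run P.-L.\ Lions' concentration--compactness argument in the iterative form of Jeanjean and Tanaka \cite{JeanjeanTanaka}: first extract the weak limit $u_0$, then repeatedly peel off translated ``bubbles'' detected by a vanishing/non-vanishing alternative, keeping track of the loss of norm and energy through a Brezis--Lieb-type splitting, and finally observe that this bookkeeping forces the procedure to terminate after finitely many steps.

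Concretely, I would first pass to a subsequence so that $u_n \weakto u_0$ in $H^{\alpha/2}(\R^N)$ and $u_n \to u_0$ in $L^r_\loc(\R^N)$ for $r \in [2,2^*_\alpha)$; since $V_\loc \in L^\infty(\R^N)\cap L^s(\R^N)$ with $s\geq N/\alpha$, multiplication by $V_\loc$ is compact into the dual and, together with (G1)--(G4), this gives $\cJ'(u_0)=0$, i.e.\ (a). Writing $v_n^1:=u_n-u_0\weakto 0$, I apply the Lions lemma: if $v_n^1\to 0$ in some (hence every) $L^r(\R^N)$, $r\in(2,2^*_\alpha)$, then testing $\cJ'(u_n)$ against $v_n^1$ and using (G2)--(G3) yields $\|v_n^1\|\to 0$ and we stop with $\ell=0$; otherwise there are $\delta>0$ and $y_n^1$ with $\int_{B(y_n^1,1)}|v_n^1|^2\geq\delta$, the centers must escape ($|y_n^1|\to\infty$, because $v_n^1\to 0$ in $L^2_\loc$), and replacing them by nearest lattice points we may take $y_n^1\in\Z^N$ with $\tau_{-y_n^1}v_n^1\weakto w^1\neq 0$ (the unit-ball mass survives by the compact local embedding). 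The key ``de-periodization'' is that $w^1$ solves $\cJ_\per'(w^1)=0$ rather than $\cJ'(w^1)=0$: for $\varphi\in C_0^\infty(\R^N)$ one has $\cJ_\per'(w^1)(\varphi)=\lim_n\cJ_\per'(\tau_{-y_n^1}v_n^1)(\varphi)=\lim_n\cJ_\per'(u_n)(\tau_{y_n^1}\varphi)$ by $\Z^N$-invariance of $\cJ_\per$ and periodicity of $g$, and this last limit is $0$ since $\cJ'(u_n)\to 0$, $\|\tau_{y_n^1}\varphi\|=\|\varphi\|$, and $\int_{\R^N}V_\loc(x)u_n\tau_{y_n^1}\varphi\,dx\to 0$ because $V_\loc(\cdot+y_n^1)\to 0$ in $L^s_\loc(\R^N)$ while $\{u_n\}$ is bounded in $L^{2^*_\alpha}(\R^N)$; a Strauss-type argument handles the nonlinear term. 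This proves (c) for $k=1$.

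Setting $v_n^2:=v_n^1-w^1(\cdot-y_n^1)\weakto 0$ I repeat the dichotomy, obtaining further profiles $w^k$ and lattice translations $y_n^k$, and a standard comparison of the concentration functionals forces $|y_n^k-y_n^{k'}|\to\infty$ for $k\neq k'$, which with $|y_n^k|\to\infty$ is (b). At every step the translated Brezis--Lieb lemma (valid under (G1)--(G4)) gives $\|v_n^{j+1}\|^2=\|v_n^j\|^2-\|w^j\|^2+o(1)$ and $\cJ_\per(v_n^j)=\cJ_\per(v_n^{j+1})+\cJ_\per(w^j)+o(1)$. Since every nonzero critical point of $\cJ_\per$ lies on its Nehari manifold, and by the Nehari geometry of $\cJ_\per$ (Theorem~\ref{abstract} applied to $\cJ_\per$) one has $\inf_{\cN_\per}\|u\|=:\beta_\per>0$, it follows that $\|w^k\|\geq\beta_\per$ for every profile; summing the norm identities gives $\sum_k\|w^k\|^2\leq\limsup_n\|u_n\|^2<\infty$, so only finitely many bubbles appear and the process stops after some $\ell$, at which point $v_n^{\ell+1}\to 0$ in $H^{\alpha/2}(\R^N)$ --- this is (d). Summing the energy identities and using $\cJ_\per(v_n^{\ell+1})\to 0$ together with $\int_{\R^N}V_\loc(x)u_n^2\,dx\to\int_{\R^N}V_\loc(x)u_0^2\,dx$ (split $\R^N$ into a large ball, where $u_n\to u_0$ in $L^2$ and $V_\loc\in L^\infty$, and its complement, where $\|V_\loc\|_{L^s}$ is small) yields (e).

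The main obstacle is the combination underlying the last step: proving termination requires \emph{simultaneously} the uniform lower bound $\|w^k\|\geq\beta_\per>0$ for nonzero critical points of $\cJ_\per$ --- itself a consequence of the Nehari geometry --- and a Brezis--Lieb splitting of \emph{both} the quadratic part and the nonlinear integral $\int_{\R^N}G(x,\cdot)\,dx$, for which hypotheses (G1)--(G4) are precisely tailored. Closely related, and also delicate, is the ``de-periodization'' showing the profiles satisfy the periodic equation: this is exactly where the structural assumption that $V_\loc$ is an $L^s$-perturbation vanishing at infinity enters.
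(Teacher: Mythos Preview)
The paper does not actually prove Theorem~\ref{ThDecomposition-per}; it only states the result and refers the reader to \cite{BieganowskiMederski} and \cite{Bieganowski} for the technical details. Your outline is precisely the standard Jeanjean--Tanaka iteration used in those references, and the overall strategy (weak limit, Lions dichotomy, translated profiles solving the periodic problem, Brezis--Lieb bookkeeping, termination via a uniform lower bound on the profiles) is correct.

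One point deserves care. For termination you invoke ``the Nehari geometry of $\cJ_\per$ (Theorem~\ref{abstract} applied to $\cJ_\per$)'' to get $\inf_{\cN_\per}\|u\|>0$. But Theorem~\ref{ThDecomposition-per} is stated under (G1)--(G4), which do \emph{not} include the superlinearity (F3) or the monotonicity (F4); hence (J2)--(J4) of Theorem~\ref{abstract} need not hold and that theorem cannot be applied as a black box. Fortunately you do not need it: the required lower bound follows directly from (G2)--(G3). If $w\neq 0$ satisfies $\cJ_\per'(w)=0$, then $\|w\|_\per^2=\int_{\R^N}g(x,w)w\,dx\leq \varepsilon|w|_2^2+C_\varepsilon|w|_r^r\leq C\varepsilon\|w\|_\per^2+C'_\varepsilon\|w\|_\per^r$, whence $\|w\|_\per^{\,r-2}\geq (1-C\varepsilon)/C'_\varepsilon>0$ for $\varepsilon$ small. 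This is all that is used to force finiteness of the number of bubbles. With this correction your argument goes through; the ``de-periodization'' step (showing each $w^k$ solves $\cJ_\per'(w^k)=0$ rather than $\cJ'(w^k)=0$, via $V_\loc(\cdot+y_n^k)\to 0$ in $L^s_\loc$) is exactly the point where (V1) enters, and you have identified it correctly.
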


\subsection{The decomposition with singular potentials}

We consider the functional $\cJ : H^{\alpha / 2} (\R^N) \rightarrow \R$ of the form
$$
\cJ (u) = \frac{1}{2} \|u\|^2 - \mu \int_{\R^N} \frac{u^2}{|x|^\alpha} \, dx - \int_{\R^N} G(x,u) \, dx,
$$
where the norm is defined by 
$$
\|u\|^2 = c_{N,\alpha} \iint_{\R^N \times \R^N} \frac{|u(x)-u(y)|^2}{|x-y|^{N+\alpha}} \, dx \, dy + \int_{\R^N} V(x) u^2 \, dx,
$$
where
$$
c_{N,\alpha} = \frac{2^\alpha \Gamma \left( \frac{N+\alpha}{2} \right)}{2 \pi^{N/2} \left| \Gamma \left( - \frac{\alpha}{2} \right) \right|}
$$
and $\alpha \in (0,2)$. The norm is associated with the following scalar product
$$
\langle u, v \rangle := c_{N,\alpha} \iint_{\R^N \times \R^N} \frac{|u(x)-v(y)|^2}{|x-y|^{N+\alpha}} \, dx \, dy + \int_{\R^N} V(x) u(x)v(x) \, dx.
$$ 
We will also denote
$$
\cJ_{\infty} (u) = \cJ (u) - \frac12 \int_{\R^N} V_{\loc} (x) u^2 \, dx
$$
and $\mu^* = 2^{\alpha} \left( \frac{\Ga \left( \frac{N+\alpha}{4} \right) }{\Ga \left( \frac{N-\alpha}{4} \right) } \right)^2$.

\begin{remark}
Note that $\cJ_\infty$ is not $\mathbb{Z}^N$-invariant in the sense that
$$
\cJ_\infty (\tau_k u) \neq \cJ_\infty (u), \quad u \in H^{\alpha / 2} (\R^N), \ k \in \mathbb{Z}^N.
$$
In fact,
$$
\cJ_\infty (\tau_k u) - \cJ_\infty (u) = \mu \int_{\R^N} \frac{u^2 - (\tau_k u)^2}{|x|^\alpha} \, dx = \mu \int_{\R^N} \frac{u^2}{|x|^\alpha} + o(1) \quad \mbox{as} \ |k|\to\infty.
$$
\end{remark}

\begin{theorem}[{\cite[Theorem 3.1]{Bieganowski-hardy}}]\label{ThDecomposition}
Suppose that ($G1$)--($G4$) and (V1), (V2) hold and $0 \leq \mu < \mu^*$. Let $\{u_n\}$ be a bounded Palais-Smale sequence for $\cJ$. Then passing to a subsequence of $\{u_n\}$, there exist an integer $\ell > 0$ and sequences $\{y_n^k\} \subset \mathbb{Z}^N$, $w^k \in H^{\alpha / 2} (\R^N)$, $k = 1, \ldots, \ell$ such that:
\begin{enumerate}
\item[(a)] $u_n \rightharpoonup u_0$ and $\cJ' (u_0) = 0$;
\item[(b)] $|y_n^k| \to \infty$ and $|y_n^k - y_n^{k'}| \to \infty$ for $k \neq k'$;
\item[(c)] $w^k \neq 0$ and $\cJ_{\infty}'(w^k) = 0$ for each $1 \leq k \leq \ell$;
\item[(d)] $u_n - u_0 - \sum_{k=1}^\ell w^k (\cdot - y_n^k) \to 0$ in $H^{\alpha / 2} (\R^N)$ as $n \to \infty$;
\item[(e)] $\cJ (u_n) \to \cJ(u_0) + \sum_{k=1}^\ell \cJ_{\infty} (w^k) + \frac{\mu}{2} \sum_{k=1}^\ell \int_{\R^N} \frac{|w^k|^2}{|x|^\alpha} \, dx$.
\end{enumerate}
\end{theorem}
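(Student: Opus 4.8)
The plan is to run the Jeanjean–Tanaka concentration–compactness iteration exactly as in the non-singular decomposition Theorem \ref{ThDecomposition-per}, inserting extra care wherever the Hardy term $\mu\int_{\R^N}|x|^{-\alpha}u^2\,dx$ interacts with the weak limits. Throughout one uses that Lemma \ref{norm-eqv} makes $\|\cdot\|_\mu$ and $\|\cdot\|$ equivalent, that (G2)–(G3) yield $|g(x,u)|\le\varepsilon|u|+C_\varepsilon|u|^{r-1}$, and that $H^{\alpha/2}(\R^N)\hookrightarrow L^t_{\loc}(\R^N)$ is compact for $t<2^*_\alpha$. \textbf{Step 1 (the weak limit, part (a)).} Passing to a subsequence, $u_n\weakto u_0$ in $H^{\alpha/2}(\R^N)$ and $u_n\to u_0$ in $L^t_{\loc}$, $2\le t<2^*_\alpha$. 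To pass to the limit in $\cJ'(u_n)(\varphi)=o(1)$ for $\varphi\in C_0^\infty(\R^N)$, the quadratic form is weakly continuous, the nonlinear term converges by local compactness and (G1)–(G3), and the Hardy term $\mu\int|x|^{-\alpha}u_n\varphi\,dx$ converges to $\mu\int|x|^{-\alpha}u_0\varphi\,dx$: split $\R^N$ into a small ball $B_\delta(0)$, on which Lemma \ref{Lem:HardyIneq} bounds $\int_{B_\delta}|x|^{-\alpha}|u_n|^2\,dx$ uniformly while $\int_{B_\delta}|x|^{-\alpha}\,dx$ is arbitrarily small, and its complement, on which $|x|^{-\alpha}$ is bounded and $u_n\to u_0$ in $L^2_{\loc}$. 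Hence $\cJ'(u_0)=0$.

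\textbf{Step 2 (first remainder).} Put $v_n^1:=u_n-u_0\weakto 0$. The Brézis–Lieb lemma splits $\int G(x,u_n)\,dx=\int G(x,u_0)\,dx+\int G(x,v_n^1)\,dx+o(1)$; the quadratic form splits by Hilbert-space orthogonality up to $o(1)$; and the Hardy form splits because its cross term $\mu\int|x|^{-\alpha}u_0 v_n^1\,dx\to 0$ by the ball-splitting argument of Step 1 together with $v_n^1\weakto 0$. Subtracting the equation for $u_0$ one checks, as in Theorem \ref{ThDecomposition-per}, that $\{v_n^1\}$ is a bounded Palais–Smale sequence for $\cJ$ with $\cJ(v_n^1)\to\lim_n\cJ(u_n)-\cJ(u_0)$. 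If $v_n^1\to 0$ in $L^r(\R^N)$, then testing the Palais–Smale condition against $v_n^1$, using $|g(x,u)|\le\varepsilon|u|+C_\varepsilon|u|^{r-1}$ with interpolation (as in Lemma \ref{LemDiscreteness}, Case 1) and Lemma \ref{norm-eqv}, gives $v_n^1\to 0$ in $H^{\alpha/2}$, and the process stops with $\ell=0$.

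\textbf{Step 3 (profile extraction and iteration).} Otherwise Lions' lemma yields $y_n^1$ with $\liminf_n\int_{B(y_n^1,1)}|v_n^1|^2\,dx>0$; since $v_n^1\weakto 0$ no subsequence of $\{y_n^1\}$ stays bounded, so $|y_n^1|\to\infty$, and replacing $y_n^1$ by the nearest lattice point (a bounded shift) we take $y_n^1\in\Z^N$. Then $v_n^1(\cdot+y_n^1)\weakto w^1\ne0$, and testing $\cJ'(v_n^1)(\varphi(\cdot-y_n^1))=o(1)$ — using $\Z^N$-periodicity of $V_{\per}$ and of $g$, the decay of $V_{\loc}$ at infinity (Hölder via (V1)), and the identity $\mu\int|x|^{-\alpha}v_n^1(x)\varphi(x-y_n^1)\,dx=\mu\int|x+y_n^1|^{-\alpha}v_n^1(x+y_n^1)\varphi(x)\,dx\to0$ — identifies $w^1$ as a nontrivial critical point of the translation-invariant periodic functional associated with $\cJ_\infty$; the Hardy potential is invisible to $w^1$, which is precisely why the correction $\tfrac{\mu}{2}\sum_k\int|x|^{-\alpha}|w^k|^2\,dx$ is added in (e), Lemma \ref{hardyLemma} also ensuring $\int|x|^{-\alpha}|w^k(\cdot-y_n^k)|^2\,dx\to0$. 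One then repeats with $v_n^2:=v_n^1-w^1(\cdot-y_n^1)\weakto0$, a bounded Palais–Smale sequence for the periodic functional, and so on. Since each extracted profile is a nontrivial critical point of that functional, $\|w^k\|\ge\rho$ for a fixed $\rho>0$ from the mountain-pass geometry (the analogue of (J1)), while iterating the Brézis–Lieb identity gives $\|u_0\|^2+\sum_{k=1}^{\ell}\|w^k\|^2\le\liminf_n\|u_n\|^2<\infty$; hence $\ell\le(\liminf_n\|u_n\|^2)/\rho^2$, which is (b)–(c). The $L^r$-smallness reached at the final step upgrades to $H^{\alpha/2}$-convergence of $u_n-u_0-\sum_{k=1}^\ell w^k(\cdot-y_n^k)$ exactly as in Step 2, giving (d), and assembling the finitely many Brézis–Lieb identities for the norm, for $\int G$, and for the Hardy form yields (e).

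\textbf{Main obstacle.} The delicate point is the two-sided bookkeeping of the Hardy term: showing that $|x|^{-\alpha}u^2$ is asymptotically negligible near the origin on weakly null remainders (needed in Steps 1–2 and in the Brézis–Lieb splittings) and at infinity on the translated profiles (Lemma \ref{hardyLemma}, needed so that the profile equation is Hardy-free and that (e) picks up the extra Hardy correction). Everything else runs parallel to the non-singular decomposition Theorem \ref{ThDecomposition-per}, to whose proof and to the cited references I would defer for the routine estimates.
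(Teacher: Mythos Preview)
The paper does not supply a proof of this theorem; Section~\ref{sec:6} explicitly defers the technical arguments to the cited source \cite{Bieganowski-hardy} and records only that the decompositions are ``in the spirit of \cite{JeanjeanTanaka}''. Your outline is precisely that Jeanjean--Tanaka iteration, with Lemmas~\ref{hardyLemma} and~\ref{norm-eqv} inserted to control the Hardy term at each translation and Br\'ezis--Lieb step --- exactly the role those lemmas are prepared for in the paper --- so your approach coincides with the one the paper indicates.
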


\begin{acknowledgement}
The author was partially supported by the National Science Centre, Poland (Grant No. 2017/25/N/ST1/00531).
\end{acknowledgement}


\begin{thebibliography}{99.}

\bibitem{Akozbek} N. Ak\"{o}zbek, S. John: {\em Optical solitary waves in two- and three-dimensional nonlinear photonic band-gap structures}, Phys. Rev. E \textbf{57} (2), (1998), 2287--2319.

\bibitem{AlamaLi} S. Alama, Y. Y. Li: {\em On ''multibump'' bound states for certain semilinear elliptic equations},  Indiana Univ. Math. J. \textbf{41}, (1992), no. 4, 983--1026. 

\bibitem{AmbrosettiCeramiRuiz} A. Ambrosetti, G. Cerami, D. Ruiz: {\em Solitons of linearly coupled systems of semilinear non-autonomous equations on $\mathbb{R}^N$}, J. Funct. Anal. \textbf{254}, (2008) 2816--2845.

\bibitem{AmbrosettiColorado} A. Ambrosetti, E. Colorado: {\em Bound and ground states of coupled nonlinear Schr\"odinger equations}, C. R. Math. Acad. Sci. Paris Ser. I \textbf{342}, (2006), 453--458.

\bibitem{AmbrosettiRabinowitz} A. Ambrosetti, P. Rabinowitz: {\em Dual variational methods in critical point theory and applications}, J. Functional Analysis \textbf{14} (1973), 349--381.

\bibitem{BartschDingPeriodic} T. Bartsch, Y. Ding: {\em On a nonlinear Schr\"odinger equation with periodic potential}, Math. Ann. \textbf{313} (1999), no. 1, 15–37. 

\bibitem{BartschDancerWang} T. Bartsch, N. Dancer, Z.-Q. Wang: {\em A Liouville theorem, a-priori bounds, and bifurcating branches of positive solutions for a nonlinear elliptic system}, Calc. Var. Partial Differ. Equ. \textbf{37}, (2010), 345--361.

\bibitem{BartschDing} T. Bartsch, Y. Ding: {\em Deformation theorems on non-metrizable vector spaces and applications to critical point theory}, Mathematische Nachrichten \textbf{279} (12), (2006), 1267--1288.

\bibitem{Bartsch} T. Bartsch: {\em Infinitely many solutions of a symmetric Dirichlet problem}, Nonlinear Analysis, Vol. \textbf{20}, Issue 10 (1993), p. 1205--1216.

\bibitem{Bieganowski-hardy} B. Bieganowski: {\em The fractional Schr\"odinger equation with Hardy-type potentials and sign-changing nonlinearities}, to appear in Nonlinear Analysis (2018), arXiv:1802.00235.

\bibitem{Bieganowski} B. Bieganowski: {\em Solutions of the fractional Schrödinger equation with a sign-changing nonlinearity}, J. Math. Anal. Appl. \textbf{450} (2017), 461--479.

\bibitem{BieganowskiMederski} B. Bieganowski, J. Mederski: {\em Nonlinear Schr\"odinger equations with sum of periodic and vanishig potentials and sign-changning nonlinearities}, Commun. Pure Appl. Anal., Vol. \textbf{17}, Issue 1, (2018), p. 143--161.

\bibitem{BelmonteBetiaPelinovsky} J. Belmonte-Beitia, D. Pelinovsky: {\em Bifurcation of gap solitons in periodic potentials with a periodic sign-varying nonlinearity coefficient}, Appl. Anal. \textbf{89}, (2010), no. 9, 1335--1350.

\bibitem{BenciGrisantiMeicheletti} V. Benci, C.R. Grisanti, A.M. Micheletti: {\em Existence and non existence of the ground state solution for the nonlinear Schr\"odinger equations with $V(\infty) = 0$}, Topol. Methods in Nonlinear Anal. \textbf{26}, (2005), 203--219.

\bibitem{BenciRabinowitz} V. Benci, P. H. Rabinowitz: {\em Critical point theorems for indefinite functionals}, Invent. Math. \textbf{52}, (1979), no. 3, 241--273.

\bibitem{NonKerrBook} A. Biswas, S. Konar: {\em Introduction to non-Kerr Law Optical Solitons}, Chapman and Hall (2006).

\bibitem{BuffoniJeanStuart} B. Buffoni, L. Jeanjean, C. A. Stuart: {\em Existence of a nontrivial solution to a strongly indefinite semilinear equation}, Proc. Amer. Math. Soc. \textbf{119} (1993), no. 1, 179--186. 

\bibitem{Buryak} A.V. Buryak, P. Di Trapani, D.V. Skryabin, S. Trillo: {\em Optical solitons due to quadratic nonlinearities: from basic physic to futuristic applications}, Physics Reports \textbf{370}, (2002) 63--235.

\bibitem{ChabrowskiSzulkin2002} J. Chabrowski, A. Szulkin: {\em On a semilinear Schr\"odinger equation with critical Sobolev exponent}, Proc. Amer. Math. Soc. \textbf{130}, (2002), 85--93.

\bibitem{ChenZouCalPDE2013} Z. Chen, W. Zou: {\em An optimal constant for the existence of least energy solutions of a coupled Schr\"odinger system}, Calc. Var. PDE. \textbf{48}, (2013), No.3-4, 695--711.

\bibitem{Cheng} M. Cheng: {\em Bound state for the fractional Schr\"odinger equation with unbounded potential}, J. Math. Phys. \textbf{53}, 043507 (2012).

\bibitem{CotiZelati} V. Coti-Zelati, P. Rabinowitz: {\em Homoclinic type solutions for a semilinear elliptic PDE on $\R^n$}, Comm. Pure Appl. Math. \textbf{45}, (1992), no. 10, 1217--1269.

\bibitem{dAveniaMederski} P. d'Avenia, J. Mederski: {\em Positive ground states for a system of Schr\"{o}dinger equations with critically growing nonlinearities}, Calc. Var. Partial Differential Equations \textbf{53} (2015), no. 3-4, 879--900.

\bibitem{Davila2} J. D\'avila, M. del Pino, S. Dipierro, E. Valdinoci: {\em Concentration phenomena for the nonlocal Schr\"odinger equation with Dirichlet datum}, Anal. PDE \textbf{8} (2015), no. 5, 1165--1235.

\bibitem{Davila} J. D\'avila, M. del Pino, J. Wei: {\em Concentrating standing waves for the fractional nonlinear Schr\"odinger equation}, J. Differential Equations \textbf{256} (2014), no. 2, 858--892.

\bibitem{Dipierro} S. Dipierro, G. Palatucci, E. Valdinoci: {\em Existence and symmetry results for a Schr\"odinger type problem involving the fractional Laplacian}, Le Matematiche \textbf{68}, (2013), no. 1.

\bibitem{doO} J.M. do \'{O}, J.C. de Albuquerque: {\em Coupled elliptic systems involving the square root of the Laplacian and Trudinger-Moser critical growth}, Differential Integral Equations, Volume 31, Number 5/6 (2018), 403--434.

\bibitem{Doerfler} W. D\"orfler, A. Lechleiter, M. Plum, G. Schneider, C. Wieners, {\em Photonic Crystals: Mathematical Analysis and Numerical Approximation}, Springer Basel (2012)

\bibitem{Fall2} M.M. Fall, F. Mahmoudi, E. Valdinoci: {\em Ground states and concentration phenomena for the fractional Schr\"odinger equation}, Nonlinearity \textbf{28} (2015), no. 6, 1937--1961.

\bibitem{Frank2} R.L. Frank, E. Lenzmann, L. Silvestre: {\em Uniqueness of Radial Solutions for the Fractional Laplacian}, Comm. Pure Appl. Math., Vol. 69, Issue 9 (2016), 1671--1726.

\bibitem{FrankSeiringer} R.L. Frank, R. Seiringer: {\em Non-linear ground state representations and sharp Hardy inequalities}, J. Funct. Anal. \textbf{255}, Issue 12 (2008), 3407--3430.

\bibitem{GilbargTrudinger} D. Gilbarg, N.S. Trudinger: {\em Elliptic partial differential equations of second order}, Springer-Verlag, Berlin, 2001.

\bibitem{GoodmanWinsteinJNS2001} R.H. Goodman, M.1. Weinstein, P.J. Holmes: {\em Nonlinear Propagation of Light in One-Dimensional Periodic Structures}, J. Nonlinear Science \textbf{11}, (2001), 123--168.

\bibitem{GuoMederski} Q. Guo, J. Mederski: {\em Ground states of nonlinear Schr\"odinger equations with sum of periodic and inverse-square potentials}, Journal of Differential Equations \textbf{260}, (2016), 4180--4202

\bibitem{IkomaTanaka} N. Ikoma, K. Tanaka: {\em A local mountain pass type result for a system of nonlinear Schr\"odinger equations}, Calc. Var. Partial Differ. Equ. \textbf{40}, (2011), 449--480.

\bibitem{Jeanjean} L. Jeanjean: {\em On the existence of bounded Palais-Smale sequences and application to a Landesman-Lazer-type problem set on $\R^N$}, Proc. Roy. Soc. Edinburgh Sect. A \textbf{129} (1999), no. 4, 787--809. 

\bibitem{JeanjeanTanaka} L. Jeanjean, K. Tanaka: {\em A positive solution for a nonlinear Schr\"{o}dinger equation on $\R^N$}, Indiana Univ. Math. Journal \textbf{54} (2), (2005), 443--464.

\bibitem{KryszSzulkin} W. Kryszewski, A. Szulkin, {\em Generalized linking theorem with an application to semilinear Schr\"odinger equation}, Adv. Diff. Eq. \textbf{3}, (1998), 441--472.

\bibitem{Kuchment} P. Kuchment: {\em The mathematics of photonic crystals}, Mathematical modeling in optical science, Frontiers Appl. Math., \textbf{22}, SIAM, Philadelphia (2001), 207--272.

\bibitem{Kwasnicki} M. Kwaśnicki: {\em Ten equivalent definitions of the fractional Laplace operator}, Fract. Calc. Appl. Anal., Vol. 20, no 1 (2017), 7--51.

\bibitem{Laskin2000} N. Laskin: {\em Fractional quantum mechanics and L\'evy path integrals}, Phys. Lett. A \textbf{268}, 298--305 (2000).

\bibitem{Laskin2002} N. Laskin: {\em Fractional Schr\"odinger equation}, Phys. Rev. E \textbf{66}, 056108 (2002).

\bibitem{LiSzulkin} G. Li, A. Szulkin: {\em An asymptotically periodic Schr\"odinger equation with indefinite linear part}, Commun. Contemp. Math. \textbf{4}, (2002), no. 4, 763--776.

\bibitem{LiTang} G. Li, X.H. Tang: {\em Nehari-type ground state solutions for Schr\"{o}dinger equations including critical exponent}, Appl. Math. Lett. \textbf{37} (2014), 101-106.

\bibitem{LiWangZeng} Y. Li, Z.-Q. Wang, J. Zeng: {\em Ground states of nonlinear Schr\"odinger equations with potentials}, Ann. Inst. H. Poincaré Anal. Non Linéaire 23 (2006), no. 6, 829--837. 

\bibitem{Liu} S. Liu: {\em On superlinear Schr\"odinger equations with periodic potential}, Calc. Var. Partial Differential Equations \textbf{45} (2012), no. 1-2, 1--9.

\bibitem{Longhi} S. Longhi: {\em Fractional Schr\"{o}dinger equation in optics.}, Opt. Lett. \textbf{40} (2015), 1117--1120.

\bibitem{MaiaJDE2006} L.A. Maia, E. Montefusco, B. Pellacci: {\em Positive solutions for a weakly coupled nonlinear Schr\"odinger system}, J. Differential Equations \textbf{229} (2), (2006), 743--767.

\bibitem{Malomed} B. Malomed: {\em Multi-component Bose-Einstein condensates: Theory}, In: Emergent Nonlinear Phenomena in Bose-Einstein Condensation, P. G. Kevrekidis et al. (eds.), Atomic, Optical, and Plasma Physics 45, Springer-Verlag, Berlin, 2008, 287--305.

\bibitem{MederskiTMNA2014} J. Mederski: {\em Solutions to a nonlinear Schr\"odinger equation with periodic potential and zero on the boundary of the spectrum}, Topol. Methods Nonlinear Anal. \textbf{46} (2015), no. 2, 755--771.

\bibitem{MederskiNLS2014} J. Mederski: {\em Ground states of a system of nonlinear Schr\"odinger equations with periodic potentials},  Comm. Partial Differential Equations \textbf{41} (2016), no. 9, 1426--1440.

\bibitem{MontefuscoPellacciSquassinaJEMS2008} E. Montefusco, B. Pellacci, M. Squassina: {\em Semiclassical states for weakly coupled nonlinear Schr\"odinger systems}, J. Eur. Math. Soc. \textbf{10}, (2008), no. 1, 47--71.

\bibitem{Pankov} A. Pankov: {\em Periodic Nonlinear Schr\"odinger Equation with Application to Photonic Crystals}, Milan J. Math. \textbf{73}, (2005), 259--287.

\bibitem{PankovDecay} A. Pankov: {\em On decay of solutions to nonlinear Schr\"odinger equations},  Proc. Amer. Math. Soc. \textbf{136}, (2008), 2565--2570.

\bibitem{PengChenTang} J. Peng, S. Chen, X. Tang: {\em Semiclassical solutions for linearly coupled Schr\"{o}dinger equations without compactness}, Complex Variables and Elliptic Equations (2018), DOI: 10.1080/17476933.2018.1450395.

\bibitem{Rabinowitz:1992} P.H. Rabinowitz: {\em On a class of nonlinear Schr\"odinger equations}, Z. Angew. Math. Phys. \textbf{43}, (1992), 270--291.

\bibitem{ReedSimon} M. Reed, B. Simon: {\em Methods of Modern Mathematical Physics, Analysis of Operators, Vol. IV}, Academic Press, New York, 1978.

\bibitem{Robinett} R.W. Robinett: {\em Quantum mechanical time-development operator for the uniformly accelerated particle}, Am. J. Phys. \textbf{64}, (1996), 803--807.

\bibitem{secchi} S. Secchi: {\em Ground state solutions for nonlinear fractional Schr\"{o}dinger equations in $\mathbb{R}^N$}, Journal of Mathematical Physics \textbf{54}, 031501 (2013); doi: 10.1063/1.4793990.

\bibitem{Sirakov} B. Sirakov: {\em Least-energy solitary waves for a system of nonlinear Schr\"odinger equations in $\R^n$}, Comm. Math. Phys. \textbf{271}, (2007), 199--221.

\bibitem{NonlinearPhotonicCrystals} R. E. Slusher, B. J. Eggleton: {\em Nonlinear Photonic Crystals}, Springer 2003.

\bibitem{Struwe} M. Struwe: {\em Variational Methods}, Springer 2008.

\bibitem{SzulkinWeth} A. Szulkin, T. Weth: {\em Ground state solutions for some indefinite variational problems}, J. Funct. Anal. \textbf{257}, (2009), no. 12, 3802--3822. 

\bibitem{Tartar} L. Tartar, \emph{An introduction to {S}obolev spaces and interpolation spaces}, Lecture Notes of the Unione Matematica Italiana \textbf{3}, Springer, Berlin; UMI, Bologna, 2007.

\bibitem{TerraciniVerzini} S. Terracini, G. Verzini: {\em Multipulse phases in $k$-mixtures of Bose-Einstein condensates}, Arch. Ration. Mech. Anal. \textbf{194}, (2009), no. 3, 717--741.

\bibitem{WeiWethARMA2008} J. Wei, T. Weth: {\em Radial solutions and phase separation in a system of two coupled Schr\"odinger equations}, Arch. Ration. Mech. Anal. \textbf{190}, (2008), 83--106.

\bibitem{Willem} M. Willem: {\em Minimax Theorems}, Birkh\"auser Verlag 1996.

\bibitem{WillemZou} M. Willem, W. Zou: {\em On a Schr\"odinger equation with periodic potential and spectrum point zero}, Indiana Univ. Math. J. \textbf{52}, (2003), no. 1, 109--132.

\bibitem{YuPhysRevA} Y.V. Bludov, V.A. Brazhnyi, V.V. Konotop: {\em Delocalizing transition in one-dimensional condensates in optical lattices due to inhomogeneous interactions}, Phys. Rev. A \textbf{76}, (2007), 023603.

\bibitem{Zhang} H. Zhang, J. Xu, F. Zhang: {\em Existence of positive ground states for some nonlinear Schr\"{o}dinger systems}, Bound. Value Probl., (2013), 1--16.

\bibitem{ZhangLiu} Y. Zhang, X. Liu, M.R. Beli\'c, W. Zhong, Y. Zhang, M. Xiao: {\em Propagation Dynamics of a Light Beam in a Fractional Schr\"{o}dinger Equation}, Phys. Rev. Lett. \textbf{115}, (2015), 180403.

\bibitem{ZhangZhong} Y. Zhang, H. Zhong, M.R. Beli\'c, N. Ahmed, Y. Zhang, M. Xiao: {\em Diffraction-free beams in fractional Schr\"{o}dinger equation}, Scientific Reports volume \textbf{6}, Article number: 23645. (2016)

\end{thebibliography}
\end{document}